\title{Exact Recovery in the Geometric Hidden Community Model}
\author{Julia Gaudio and Andrew Jin}
\DeclareMathOperator*{\argmin}{argmin}
\DeclareMathOperator*{\argmax}{argmax}
\newcommand{\R}{\mathbb{R}}
\renewcommand{\P}{\mathbb{P}}
\newcommand{\E}{\mathbb{E}}
\newcommand{\N}{\mathbb{N}}
\newcommand{\Z}{\mathbb{Z}}
\newcommand{\1}{\mathbbm{1}}
\newcommand{\calS}{\mathcal{S}}
\newcommand{\calN}{\mathcal{N}}
\newcommand{\calE}{\mathcal{E}}
\newcommand{\calA}{\mathcal{A}}
\newcommand{\calB}{\mathcal{B}}
\newcommand{\calH}{\mathcal{H}}
\newcommand{\calI}{\mathcal{I}}
\newcommand{\calJ}{\mathcal{J}}
\newcommand{\calL}{\mathcal{L}}
\newcommand{\calX}{\mathcal{X}}
\newcommand{\calY}{\mathcal{Y}}
\newcommand{\visible}{\leftrightarrow{}}
\theoremstyle{definition}
\newtheorem{definition}{Definition}
\newtheorem{assumption}{Assumption}
\theoremstyle{plain}
\newtheorem{theorem}{Theorem}
\newtheorem{lemma}{Lemma}
\newtheorem{corollary}{Corollary}
\newtheorem{fact}{Fact}
\newtheorem{prop}{Proposition}
\begin{document}

\maketitle

\begin{abstract}
Hidden community problems, such as community detection in the Stochastic Block Model (SBM), submatrix localization, and $\Z_2$ synchronization, have received considerable attention in the probability, statistics, and information-theory literature. Motivated by transitive behavior in social networks, which tend to exhibit high triangle density, recent works have considered hidden community models in spatially-embedded networks. In particular, Baccelli and Sankararaman proposed the Geometric SBM, a spatially-embedded analogue of the standard SBM with dramatically more triangles. In this paper, we consider the problem of exact recovery for the Geometric Hidden Community Model (GHCM) of Gaudio, Guan, Niu, and Wei, which generalizes the Geometric SBM to allow for arbitrary pairwise observation distributions. Under mild technical assumptions, we find the information-theoretic threshold for exact recovery in the ``distance-dependent'' GHCM, which allows the pairwise distributions to depend on distance as well as community labels, thus completing the picture of exact recovery in spatially-embedded hidden community models. 
\end{abstract}

\section{Introduction}
\label{sec:introduction}
Community detection is the problem of identifying hidden dense communities in networks. The Stochastic Block Model (SBM), introduced by Holland, Laskey, and Leinhardt in 1983 \cite{Holland1983}, is considered the canonical model for generating random graphs with community structure. In its simplest version, the SBM comprises two communities of vertices of equal size. Pairs of vertices in the same community are connected with probability $p$, while pairs of vertices in opposite communities are connected with probability $q$. If $p > q$, then the graph exhibits assortative community structure; that is, typical instances of the SBM have two dense communities with sparse connections between them. Depending on the values of $p$ and $q$, it may be possible to recover the communities exactly, almost exactly, or partially. These inference tasks have been the subject of a large body of research, much of it summarized in the survey of Abbe \cite{Abbe2018}. 

An emerging line of work studies spatially-embedded versions of community detection problems, which lead to \emph{transitive} community structure; namely, if $(u,v)$ and $(u,w)$ are edges, then $(v,w)$ is likely to be an edge as well. Recently, Baccelli and Sankararaman proposed the Geometric Stochastic Block Model (GSBM) \cite{Sankararaman2018}, a spatially-embedded analogue of the SBM in which vertices are placed in a bounded region of $\mathbb{R}^d$ according to a homogeneous Poisson process, and vertices are connected by edges with probabilities that depend on both their communities as well as their distance. An important special case of the GSBM is one where vertices may only be connected by an edge if they are sufficiently close to each other (i.e., if the distance between $u,v$ satisfies $\Vert u - v \Vert \leq r_n$); in that case, they are connected with probability $a$ if they are in the same community and with probability $b$ otherwise. We refer to this as the \emph{constant-probability} GSBM. In this special case, the GSBM has dramatically more triangles than the standard SBM. Fixing the average degree to be the same in the two models, the GSBM with $\text{Pois}(\lambda n)$ vertices has order $n$ times more triangles than the SBM with $n$ vertices. 

Following the introduction of the GSBM, subsequent works focused on the exact recovery problem in the constant-probability GSBM. Gaudio, Niu, and Wei \cite{Gaudio+2024} determined the exact recovery threshold in the logarithmic-degree regime of the constant-probability GSBM, complementing the impossibility result that had been established by Abbe et al \cite{Abbe+2020}. Furthermore, \cite{Gaudio+2024} proposed a linear-time algorithm for exact recovery which achieves the information-theoretic (IT) threshold. Gaudio, Guan, Niu, and Wei \cite{Gaudio+2025} then proposed the Geometric Hidden Community Model (GHCM), a generalization of the GSBM which allows for multiple, possibly unbalanced communities, and considers pairwise distributions that are not necessarily Bernoulli-valued (i.e. ``edge'' or ``no edge''). The GHCM thus extends prominent hidden community models like submatrix localization and $\mathbb{Z}_2$ synchronization to the spatially-embedded setting, capturing settings where pairwise observations are available only for vertices which are ``close'' in terms of their physical distance or their feature embedding. For example, consider the problem of sensor calibration in a physical network, in which relative measurements are available only for pairs of sensors that are close to each other. A variation of the algorithm of \cite{Gaudio+2024} was shown to achieve the IT threshold in the constant-probability GHCM, under some technical assumptions. Subsequently, Gaudio and Guan \cite{GaudioGuan2025} showed that one of these assumptions, which requires a certain ``distinctness of distributions,'' is not required in the two-community setting, thus allowing for a spatially-embedded analogue of the planted dense subgraph and submatrix localization problems, among other hidden community problems. 

Moving away from the constant-probability assumption, Avrachenkov, Kumar, and Leskel\"a \cite{Avrachenkov2024} considered the setting of two communities, where edge probabilities are governed by a pair of functions $f_{\text{in}}(\cdot)$ and $f_{\text{out}}(\cdot)$: if $u,v$ are in the same community, they are connected with probability $f_{\text{in}}(\Vert u - v\Vert)$; otherwise, they are connected with probability $f_{\text{out}}(\Vert u - v\Vert)$. The authors considered only the case where $d=1$ and the functions differ by a constant factor. Subsequently, Gaudio and Jin \cite{GaudioJin2025} handled the case of arbitrary, constant dimension, where $f_{\text{in}}(\cdot)$ and $f_{\text{out}}(\cdot)$ only need to satisfy some regularity conditions but otherwise can be quite general.

In this paper, we complete the picture for the exact recovery problem in the GHCM by allowing for distance dependence in pairwise distributions. We determine the IT threshold for exact recovery and show that an extension of the algorithms proposed by \cite{Gaudio+2025} and \cite{GaudioJin2025} achieves the IT threshold. Intuitively, the IT threshold is determined by the probability that a vertex can be accurately labeled when all other labels are accurately given. Since there are $\Theta(n)$ vertices, the IT threshold occurs when the probability of misclassifying a given vertex is $\Theta(1/n)$. That is, if the error probability is $\omega(1/n)$, exact recovery is impossible, while if it is $o(1/n)$, then exact recovery is possible. 

While these asymptotics are intuitive, they are not immediate. The asymptotics of the impossibility result are established in \cite{Abbe+2020}, and lower-bounding the error probability is accomplished by employing a Cram\'er lower bound from large deviations theory, following the analysis of \cite{Gaudio+2025}. On the other hand, the achievability result comes from adapting the algorithm proposed in \cite{Gaudio+2025}. The algorithm has three main steps: (1) label an initial seed set of $\Theta(\log n)$ vertices using the maximum a posteriori (MAP) estimator; (2) propagate labels among ``blocks'' of vertices to achieve almost exact recovery; and (3) refine the almost exact labeling to turn it into an exact labeling. Step 3 is where the IT threshold is invoked. The main challenge in adapting the algorithm to the current, more general setting is in Steps (1) and (2). By allowing the observation distributions to be distance-dependent, there are pairs of vertices $(u,v)$ such that their pairwise observation, given by $X_{uv}$, has a similar distribution no matter what communities $u$ and $v$ belong to. In this way, some distances are more informative than others. While the IT threshold accounts for the heterogeneity of information, accounting for this hurdle in Steps (1) and (2), where the IT threshold is not invoked, is the main technical challenge in this paper.

\subsection{Notation and Organization}
We use $D_+$ to denote the Chernoff-Hellinger (CH) divergence, which was first introduced by Abbe and Sandon in \cite{AbbeSandon2015a}, then extended in \cite{Gaudio+2025} and \cite{GaudioJin2025}. We use the definition from \cite{GaudioJin2025}, which goes as follows. Let $p(y) = (p_1(\cdot;y), \ldots, p_k(\cdot; y))$ and $q(y) = (q_1(\cdot; y), \ldots, q_k(\cdot ;y))$ be vectors of probability distributions depending on a parameter $y \in \calY$, where $p_i(\cdot; y)$ and $q_i(\cdot; y)$ are probability mass functions (in the discrete case) or probability density functions (in the continuous case). In addition, let $\pi = (\pi_1, \ldots, \pi_k)$ be a vector of prior probabilities, and let $g(y)$ be a density on $\mathcal{Y}$. Then, we define the CH-divergence between $p$ and $q$ with respect to $\pi$ and $g$ as
\begin{equation}
\label{eq:CH_divergence_discrete}
\begin{aligned}
    &D_+(p \Vert q; \pi, g) =  1 - \inf_{t \in [0,1]} \sum_{i = 1}^k \pi_i \int_{y \in \mathcal{Y}} g(y)\sum_{x \in \mathcal{X}} p_i(x;y)^t q_i(x;y)^{1-t} \: dy
\end{aligned}
\end{equation}
when $p$ and $q$ are discrete, and 
\begin{equation}
\label{eq:CH_divergence_continuous}
\begin{aligned}
    &D_+(p \Vert q; \pi, g) =  1 - \inf_{t \in [0,1]} \sum_{i = 1}^k \pi_i \int_{y \in \mathcal{Y}} g(y) \int_{x \in \mathcal{X}} p_i(x;y)^t q_i(x;y)^{1-t} \: dx \: dy
\end{aligned}
\end{equation}
when $p$ and $q$ are continuous. 

The rest of the paper is organized as follows. Section \ref{sec:model} formally defines the GHCM model and presents our main results establishing an information-theoretic threshold for exact recovery under certain assumptions. Section \ref{sec:exact_recovery_algorithm} describes the algorithm for exact recovery and provides a proof sketch for our achievability result. Section \ref{sec:proof_sketch_impossibility} provides a proof sketch for our impossibility result. We discuss potential future research directions in Section \ref{sec:discussion}. Appendix \ref{appx:full_exact_recovery_algorithm} presents the full details of the exact recovery algorithm and Appendix \ref{appx:proof_of_exact_recovery} proves that the algorithm achieves exact recovery, except for the special one-dimensional case where there's a unique ``permissible labeling''. Appendix \ref{appx:1d_proof_of_exact_recovery} handles that special case, providing a modified algorithm to achieve exact recovery and the corresponding proof. Finally, Appendix \ref{appx:proof_of_impossibility} proves the impossibility of exact recovery below the information-theoretic threshold.

\section{Model and Main Results}
\label{sec:model}
We consider a generalization of the GHCM \cite{Gaudio+2025} by allowing the edge weight distribution between two vertices to depend on their distance, in addition to their community labels.

\begin{definition}
Let $n \in \N$ be a scaling parameter. Let $\lambda, r > 0$ be constants, $d \in \N$ be the dimension, and $k \in \N$ be the number of communities. Let $Z$  with $|Z| = k$ be a set which contains the label of each community. Let $\pi \in \R^k$ represent the prior probabilities of each community. For any $i, j \in Z$ and $y \in \R$, let $P_{ij}(y)$ be the edge weight distribution between two vertices of community $i$ and $j$ at a distance $y$. Let $p_{ij}(\cdot ; y)$ denote the probability mass function (PMF) or probability density function (PDF) of $P_{ij}(y)$. Then, we sample $G \sim \text{GHCM}(\lambda, n, r, \pi, P(y), d)$ as follows:
\begin{enumerate}
    \item The locations of the vertices are generated by a Poisson point process with intensity $\lambda$ over $\calS_{d, n} := [-n^{1/d}/2, n^{1/d}/2]^d$, the cube of volume $n$ centered at the origin. Let $V$ be the set of generated vertices.
    
    \item Each vertex $v \in V$ is independently assigned a community label, denoted $\sigma^*(v)$, with $\P(\sigma^*(v) = i) = \pi_i$ for each $i \in Z$.
    
    \item Conditioned on the locations and community labels, edge weights are generated independently between each pair of vertices. Let $\overline{P}(y) := P(y/(\log n)^{1/d})$ and $\overline{p}(\cdot; y)$ be the PMF/PDF of $\overline{P}(y)$. For each pair of vertices $\{u, v\}$ where $u \neq v$, the edge weight $X_{uv}$ is sampled from
    \begin{equation*}
        X_{uv} \sim \overline{P}_{\sigma^*(u)\sigma^*(v) }(\|u-v\|),
    \end{equation*}
    where $\|\cdot\|$ denotes the Euclidean toroidal metric. We denote the observed edge weight as $x_{uv}$. We assume that $r := \inf\{y > 0 : P_{ij}(y) = 0 \: \forall y > r\} < \infty$, and call $r (\log n)^{1/d}$ the \textit{visibility radius}. In addition, we say that $u$ is \textit{visible} to $v$ if $\|u - v\| \leq r (\log n)^{1/d}$, which we denote $u \visible v$. 
\end{enumerate}
\end{definition}
Our goal is to achieve exact recovery, which is a particular notion of recovering the community labels. As discussed in \cite{Gaudio+2025}, note that if there are symmetries in the prior probabilities and edge weight distributions, then it is only possible to recover the correct labeling up to a permutation. Consequently, \cite{Gaudio+2025} introduced the definition of a permissible relabeling to account for such symmetries, which we modify to account for distance-dependent edge weight distributions. 
\begin{definition}[Permissible Relabeling]
A permutation of communities $\omega: Z \to Z$ is called permissible if $\pi_i = \pi_{\omega(i)}$ and $P_{ij}(y) = P_{\omega(i)\omega(j)}(y)$ for all $i, j \in Z$ and almost all $y \in [0, r]$. We denote the set of permissible relabelings as $\Omega_{\pi, P}$.
\end{definition}
Now, we define exact recovery. Let $\sigma^*_n$ be the true labeling and $\tilde{\sigma}_n$ be an estimated labeling. We define the agreement of $\tilde{\sigma}_n$ and $\sigma^*_n$ as
\begin{equation*}
    A(\tilde{\sigma}_n, \sigma^*_n) = \frac{1}{|V|} \max_{\omega \in \Omega_{\pi, P}} \sum_{u \in V} \1\{\tilde{\sigma}_n(u) = \omega \circ \sigma^*_n(u)\}
\end{equation*}
which is the proportion of vertices that $\tilde{\sigma}_n$ labels correctly, up to a permissible relabeling. Then, we say that $\tilde{\sigma}_n$ achieves
\begin{itemize}
    \item \textit{exact recovery} if $\lim_{n \to \infty} \P(A(\tilde{\sigma}_n, \sigma^*_n) = 1) = 1$,
    \item \textit{almost-exact recovery} if $\lim_{n \to \infty} \P(A(\tilde{\sigma}_n, \sigma^*_n) \geq 1 - \epsilon) = 1$, for all $\epsilon > 0$, and
    \item \textit{partial recovery} if $\lim_{n \to \infty} \P(A(\tilde{\sigma}_n, \sigma^*_n) \geq \alpha) = 1$ for some constant $\alpha > 0$.
\end{itemize}

We now state our impossibility result for exact recovery. Let $\theta_i := (p_{i1}, p_{i2}, \ldots, p_{ik})$ be the vector of edge weight distributions from community $i$ to all other communities, and $g(y) := dy^{d-1}/r^d$ be a density on $\calY = [0, r]$. Let $\nu_d$ be the volume of the d-dimensional unit ball, and $D_+$ be the CH-divergence defined in \eqref{eq:CH_divergence_discrete} and \eqref{eq:CH_divergence_continuous}.

\begin{theorem}[Impossibility]
\label{thm:impossibility}
    Any estimator $\tilde{\sigma}: V \to Z$ fails to achieve exact recovery for $G \sim \text{GHCM}(\lambda, n, r, \pi, P(y), d)$ when either
    \begin{enumerate}
        \item $\lambda \nu_d r^d \min_{i \neq j} D_+(\theta_i\Vert \theta_j ; \pi ,g) < 1$, or
        \item $d = 1, \lambda r < 1,$ and $|\Omega_{\pi, P}| \geq 2$.
    \end{enumerate}
\end{theorem}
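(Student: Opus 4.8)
\noindent The plan is to treat the two conditions separately, since they arise from different obstructions: condition 1 is an information-theoretic obstruction coming from the CH-divergence being too small, while condition 2 is a structural obstruction coming from disconnection of the visibility graph together with label symmetry.

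For condition 1, the strategy is a genie-aided reduction. Reveal to the estimator the true labels $\sigma^*(v)$ of all vertices $v \neq u$; the optimal way to label $u$ is then the maximum a posteriori (MAP) rule, which compares, across candidate labels $\ell \in Z$, the quantity $\pi_\ell \prod_{v \visible u} \overline{p}_{\ell \sigma^*(v)}(x_{uv}; \|u-v\|)$. It suffices to exhibit, with probability tending to $1$, a single vertex at which flipping the label away from the truth does not decrease this posterior likelihood, since such a flip produces a labeling that is not a global permissible relabeling of $\sigma^*$ and hence forces agreement strictly below $1$. First I would fix the pair $(i,j)$ attaining $\min_{i \neq j} D_+(\theta_i \Vert \theta_j; \pi, g)$ and restrict to vertices $u$ with $\sigma^*(u) = i$. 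Writing the log-likelihood-ratio statistic $S_u = \sum_{v \visible u} \log \big( \overline{p}_{j\sigma^*(v)}(x_{uv})/\overline{p}_{i\sigma^*(v)}(x_{uv}) \big)$, the genie-aided MAP prefers $j$ over $i$ whenever $S_u \geq \log(\pi_i/\pi_j)$, so the failure event is $\{S_u \geq 0\}$ up to an $O(1)$ shift that does not affect the exponent.

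The statistic $S_u$ is a sum over a Poisson number of independent contributions, namely the visible neighbors, whose count is $\mathrm{Pois}(\lambda \nu_d r^d \log n)$ with distances drawn from $g$ and labels drawn from $\pi$. A Chernoff bound gives $\P(S_u \geq 0) \leq n^{-\lambda \nu_d r^d D_+(\theta_i \Vert \theta_j; \pi, g)}$, and the crux is the matching Cram\'er lower bound $\P(S_u \geq 0) \geq n^{-\lambda \nu_d r^d D_+(\theta_i \Vert \theta_j; \pi, g)(1 + o(1))}$, obtained by exponentially tilting the compound-Poisson law to the optimizing parameter $t^*$ and applying a local limit estimate; here $g$ encodes the distance-dependence and $\pi$ the neighbor-label distribution, exactly matching the definition of $D_+$. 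Since $\lambda \nu_d r^d D_+(\theta_i \Vert \theta_j; \pi, g) < 1$, this yields a single-vertex failure probability of $n^{-1 + \Omega(1)} = \omega(1/n)$. As there are $\Theta(n)$ vertices of type $i$, the expected number of failures diverges; to upgrade this to a high-probability statement I would pass to a maximal collection of vertices pairwise separated by more than $2r(\log n)^{1/d}$, whose visibility neighborhoods, and hence failure events, are independent conditioned on the labels, and apply a second-moment argument to conclude that at least one failure occurs with probability $1 - o(1)$.

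For condition 2, the strategy is to exploit disconnection. When $d = 1$ and $\lambda r < 1$, the gaps between consecutive points are $\mathrm{Exp}(\lambda)$, so a gap exceeds the visibility radius $r \log n$ with probability $n^{-\lambda r}$; as there are $\Theta(n)$ gaps, the expected number of such ``cut'' gaps is $\Theta(n^{1 - \lambda r}) \to \infty$, and w.h.p.\ the visibility graph splits into at least two components, each of polynomial size and thus containing a vertex of every community type. Fixing a nontrivial permissible relabeling $\omega \in \Omega_{\pi, P}$, I would define $\Phi(\sigma^*)$ to agree with $\sigma^*$ on one component and with $\omega \circ \sigma^*$ on another. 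Because $\omega$ preserves $\pi$ and satisfies $P_{\omega(i)\omega(j)}(y) = P_{ij}(y)$, and because no pair is visible across components, the labelings $\sigma^*$ and $\Phi(\sigma^*)$ are equally likely a priori and induce identical observation laws; yet, since every community type appears in both components, $\Phi(\sigma^*)$ is not a global permissible relabeling of $\sigma^*$. Hence no estimator can achieve agreement $1$ with both, and by indistinguishability the failure probability is at least $1/2 - o(1)$, precluding exact recovery. The step I expect to be the main obstacle is the Cram\'er lower bound in condition 1: the upper Chernoff bound is routine, but the matching lower bound must be sharp in the constant of the exponent and must accommodate the distance-dependence, since the summands of $S_u$ are not identically distributed (their law varies with the neighbor's distance through $g$ and label through $\pi$). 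Controlling the tilted compound-Poisson distribution and its local limit behavior uniformly over these heterogeneous summands is precisely where the analysis of \cite{Gaudio+2025} must be adapted to the present setting.
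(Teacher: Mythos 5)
Your proposal is correct in outline and follows the same two-pronged strategy as the paper: a genie-aided (``Flip-Bad'') single-vertex argument with a Cram\'er-type lower bound matching the CH-divergence exponent for condition 1, and disconnection of the visibility graph combined with a nontrivial permissible relabeling for condition 2. The main executional difference is in condition 1. The paper further splits that case into $\lambda\nu_d r^d < 1$ and $\lambda\nu_d r^d \geq 1$: for the former it simply exhibits an isolated vertex via Penrose's RGG threshold, and only for the latter does it run the large-deviations argument, because its version of the Cram\'er lower bound conditions on the Poisson neighbor count $N(0)$ exceeding $\delta\log n$ and must subtract $\P(N(0)<\delta\log n)\approx n^{-\lambda\nu_d r^d}$, a correction that is negligible against the target $n^{-1+2\beta}$ only when $\lambda\nu_d r^d\geq 1$. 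If you execute the lower bound by conditioning on the neighbor count you will need the same split (or the observation that an isolated vertex has $S_u=0$ and is trivially Flip-Bad, so $\P(S_u\geq 0)\geq \P(N(u)=0)=n^{-\lambda\nu_d r^d}$ already suffices when $\lambda\nu_d r^d<1$); the direct tilting of the compound-Poisson law that you sketch can avoid the split, but then you must verify the G\"artner--Ellis hypotheses, which Assumption 2 (boundedness of the log-likelihood ratios) supplies. Your second-moment step via a $2r(\log n)^{1/d}$-separated packing, which makes the failure events exactly independent given locations and labels, is a clean substitute for the paper's invocation of Proposition 8.1 of Abbe et al., which instead bounds a correlation integral; both close the argument. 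For condition 2 your argument is the paper's, with the disconnection probability computed from exponential gaps rather than quoted from the literature, and with the same (correct) caveat that one must check both components contain vertices on which the chosen nontrivial relabeling acts, so that the hybrid labeling is not itself a global permissible relabeling of $\sigma^*$.
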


Next, we present our achievability result for exact recovery. We require the following assumptions, which are similar to those in \cite{Gaudio+2025}. These assumptions are used to achieve almost-exact recovery in Steps (1) and (2) outlined in the introduction.
\begin{assumption}[Identifiability of pairwise distributions]
\label{assump:regularity_condition}
    If $\{i, j\} \neq \{i', j'\}$ as sets, then either
    \begin{itemize}
        \item $P_{ij}(y) \equiv P_{i'j'}(y)$ for almost all $y \in [0, r]$, or
        \item $P_{ij}(y) \not \equiv P_{i'j'}(y)$ for almost all $y \in [0, r]$. 
    \end{itemize}
    We abbreviate the cases as $P_{ij} \equiv P_{i'j'}$ and $P_{ij} \not \equiv P_{i'j'}$, respectively.
\end{assumption}

\begin{assumption}[Bounded log-likelihood difference]
\label{assump:bounded_log_likelihood}
We assume that $P_{ij}$ have the same support $\calX \subset \R$ for all $i, j \in Z$, and that there exists $\eta > 0$ such that 
\begin{equation*}
    \log\left( \frac{p_{ij}(x; y)}{p_{ab}(x; y)} \right) < \eta
\end{equation*}
for any $i, j, a, b \in Z$, any $x \in \calX$,  
and almost all $y \in [0, r]$.
\end{assumption}

\begin{assumption}[Distinctness]
\label{assump:distinctness}
For any community $i \in Z$, we have $P_{ia}(y) \not \equiv P_{ib}(y)$ for any $a, b \in Z$ where $a \neq b$ and almost all $y \in [0, r]$.
\end{assumption}

For exact recovery when $d = 1$ and $|\Omega_{\pi, P}| = 1$, we need a stronger distinctness assumption.

\begin{assumption}[Strong Distinctness]
\label{assump:strong_distinctness}
For any $i, j, a, b \in Z$, we have $P_{ij}(y) \not \equiv P_{ab}(y)$ for almost all $y \in [0, r]$.
\end{assumption}

Under these assumptions, we establish the following achievability theorem.

\begin{theorem}[Achievability]
\label{thm:achievability}
    Under Assumptions \ref{assump:regularity_condition}, \ref{assump:bounded_log_likelihood}, and \ref{assump:distinctness}, there exists a polynomial-time algorithm that achieves exact recovery for $G \sim \text{GHCM}(\lambda, n, r, \pi, P(y), d)$ if $\lambda \nu_d r^d \min_{i \neq j} D_+(\theta_i \Vert \theta_j; \pi, g) > 1$ and either
    \begin{enumerate}
        \item $d \geq 2$, or 
        \item $d = 1, \lambda r > 1$.
    \end{enumerate}
    Under Assumptions \ref{assump:regularity_condition}, \ref{assump:bounded_log_likelihood} and \ref{assump:strong_distinctness}, there exists a polynomial-time algorithm that achieves exact recovery for $G \sim \text{GHCM}(\lambda, n, r, \pi, P(y), d)$ if $\lambda \nu_d r^d \min_{i \neq j} D_+(\theta_i \Vert \theta_j; \pi, g) > 1$, $d = 1$, and $|\Omega_{\pi, P}| = 1$.
\end{theorem}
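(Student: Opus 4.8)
The plan is to follow the three-stage pipeline inherited from \cite{Gaudio+2025} and \cite{GaudioJin2025}---seed labeling, block propagation, and refinement---and to show that each stage survives the introduction of distance dependence under Assumptions \ref{assump:regularity_condition}--\ref{assump:distinctness}. Throughout I would partition $\calS_{d,n}$ into hypercubic blocks of side length $\Theta((\log n)^{1/d})$, chosen so that each block contains $\Theta(\log n)$ vertices and neighboring blocks lie within the visibility radius $r(\log n)^{1/d}$. The visibility ball of a vertex has volume $\nu_d r^d \log n$, so each vertex sees $\mathrm{Pois}(\lambda \nu_d r^d \log n)$ neighbors, each at a rescaled distance $y = \|u-v\|/(\log n)^{1/d} \in [0,r]$ whose law has radial density $g(y) = dy^{d-1}/r^d$; this is the source of the factor $\lambda\nu_d r^d$ and of the weighting by $g$ inside $D_+$.

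For the refinement stage (Step 3), which is where the threshold is actually invoked, I would run, for each vertex $v$, a maximum a posteriori estimate over all of its visible neighbors using the current labels. Writing $i = \sigma^*(v)$, the probability that a competing label $j$ beats $i$ is controlled by a Chernoff bound on a compound-Poisson log-likelihood ratio: the single-increment moment generating function, after rescaling distances to $y \in [0,r]$, evaluates to $\sum_{c} \pi_c \int_0^r g(y)\int_{\calX} p_{ic}(x;y)^{1-t} p_{jc}(x;y)^{t}\,dx\,dy$, so optimizing over $t$ yields a per-vertex error of $n^{-\lambda\nu_d r^d D_+(\theta_i\Vert\theta_j;\pi,g) + o(1)}$. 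A union bound over the $n$ vertices and the finitely many competing labels then gives total error $o(1)$ precisely when $\lambda\nu_d r^d \min_{i\neq j} D_+(\theta_i\Vert\theta_j;\pi,g) > 1$. The one wrinkle is that refinement uses the almost-exactly-recovered labels rather than a genie's exact labels; since Step 2 misclassifies only $o(n)$ vertices and Assumption \ref{assump:bounded_log_likelihood} bounds each log-likelihood increment by $\eta$, I would show that the mislabeled neighbors perturb the $\Theta(\log n)$-size signal by a lower-order amount at all but a vanishing set of vertices, handling the residual set by a decoupling between the event that $v$ sits near its decision boundary and the event that $v$ has anomalously many mislabeled neighbors.

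The genuinely new work, and the step I expect to be the main obstacle, lies in Steps 1 and 2, where the threshold is not available and distance dependence must be confronted directly. For the seed (Step 1) I would run an exhaustive MAP over all $k^{\Theta(\log n)} = \mathrm{poly}(n)$ labelings of a single seed block; the difficulty is that with distance-dependent laws some intra-seed pairs are nearly uninformative, so I must argue the seed posterior still concentrates on the truth. For propagation (Step 2) I would label the vertices of each new block from their already-labeled visible neighbors in a BFS order over blocks, relying on Assumption \ref{assump:distinctness} so that distinct candidate labels for a vertex induce genuinely distinct likelihood profiles. The crux is that the expected log-likelihood gap between the true label and any competitor is an integral over $[0,r]$ of a KL-type divergence weighted by $g$; even though the integrand vanishes at uninformative distances, Assumptions \ref{assump:regularity_condition} and \ref{assump:distinctness} guarantee that the distributions differ on a positive-measure set of distances, so the gap is $\Theta(\log n)$, and Assumption \ref{assump:bounded_log_likelihood} supplies the bounded increments needed for concentration. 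I would then rule out cascading errors across blocks---ensuring that a block labeled from mostly-correct neighbors is itself mostly correct---via an induction keeping the per-block error probability polynomially small, yielding almost-exact recovery. The cases $d \geq 2$ and ($d = 1$, $\lambda r > 1$) are exactly those in which the visibility graph is connected enough for this propagation to reach every vertex.

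Finally, the regime $d = 1$ with $|\Omega_{\pi,P}| = 1$ (which may have $\lambda r < 1$) requires separate treatment, since the one-dimensional visibility graph is then typically disconnected into $\Theta(n^{1-\lambda r})$ runs and labels cannot propagate across the large gaps. Here I would label each connected component independently and invoke Strong Distinctness (Assumption \ref{assump:strong_distinctness}) in place of Assumption \ref{assump:distinctness}: because every pair of community pairs induces a distinct pairwise law, the absolute---not merely relative---community identities can be read off within each component, so the uniqueness $|\Omega_{\pi,P}| = 1$ forces the independently decoded components to be globally consistent. I would defer the modified algorithm and its analysis for this case to the appendix.
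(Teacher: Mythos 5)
Your proposal follows essentially the same route as the paper: the same seed--propagate--refine pipeline, the same compound-Poisson Chernoff bound yielding the per-vertex error exponent $\lambda\nu_d r^d D_+(\theta_i\Vert\theta_j;\pi,g)$ in the refinement step, the same observation that Assumptions \ref{assump:regularity_condition} and \ref{assump:distinctness} force the pairwise laws to differ on a positive-measure set of rescaled distances (the paper's Lemma \ref{lem:phi_property} and Corollary \ref{cor:distinguishing_vertices}), and the same component-by-component treatment with Strong Distinctness for $d=1$, $|\Omega_{\pi,P}|=1$. The one place to tighten your execution is the hand-off from almost-exact to exact recovery: a global $o(n)$ mistake count is not enough, and there should be no ``residual set'' to handle separately---the paper instead proves a uniform bound of at most $M$ (constant) mistakes per occupied block, hence at most $\beta\log n$ mislabeled neighbors for \emph{every} vertex simultaneously with $\beta$ tunable, and then union-bounds the refinement failure over all labelings within Hamming distance $\beta\log n$ of the truth in each neighborhood to decouple from the randomness of $\hat\sigma$.
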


\section{Exact Recovery Algorithm}
\label{sec:exact_recovery_algorithm}
\subsection{The Algorithm}
In this section, we describe our algorithm for exact recovery for the $d \geq 2$ and $d = 1$, $\lambda r > 1$ cases. We note that our exact recovery algorithm is the same as the exact recovery algorithm in \cite{Gaudio+2025}; our main contribution is proving that this algorithm achieves exact recovery even in the distance-dependent GHCM under suitable assumptions. To describe the algorithm, we need the following definitions from \cite{Gaudio+2025}.

\begin{definition}[Occupied Block]
    Given $\delta > 0$, we call a block $B_i$ $\delta$-occupied if $B_i$ contains at least $\delta \log n$ vertices.
\end{definition}

\begin{definition}[Mutually Visible Blocks]
    We call two blocks $B_i$ and $B_j$ mutually visible, which we note $B_i \visible B_j$, if
    \begin{equation*}
        \sup_{u \in B_i, v \in B_j} \|u -v\| \leq r(\log n)^{1/d}.
    \end{equation*}
\end{definition}

\begin{definition}[Block Visibility Graph]
    Let $G = (V, E)$ be a graph on $\calS_{d, n}$ and consider a partition of $\calS_{d, n}$ into blocks of volume $v(n)$. Then, we define the $(v(n), c(n))$-block visibility graph of $G$ as the graph $H = (V^\dagger, E^\dagger)$, where $V^\dagger := \{i \in [n/v(n)] : |V_i| \geq c(n)\}$ is the set of all blocks with at least $c(n)$ vertices and $E^\dagger := \{\{i, j\}: i, j \in V^\dagger, B_i \visible B_j\}$ is the set of all pairs of blocks in $V^\dagger$ that are mutually visible.
\end{definition}

We now provide an overview of the algorithm. First, we divide $\calS_{d, n}$ into cubes of volume $r^d \chi \log n$, which we call \textit{blocks}. The parameters $\chi$ and $\delta$ are chosen so that the $(r^d \chi \log n, \delta \log n)$-block visibility graph is connected with high probability.
If $d = 1$, fix $\chi_0 < (1 - 1/(\lambda r)) / 2$ and $\delta'$ such that Proposition 1 in \cite{GaudioJin2025} is satisfied. Then, we can take any \begin{equation}
\label{eq:chi_delta_conditions_d=1}
    \chi_0/2 < \chi < \chi_0  \: \text{ and } \: \delta' \chi / 2 < \delta < \delta' \chi,
\end{equation}
where $\delta'$, and thus $\delta$, can be made arbitrarily small. If $d = 2$, fix $\chi_0$ such that
\begin{equation*}
    \lambda r^d \bigg(\nu_d \Big(1-\frac{3\sqrt{d}}{2} \chi_0^{1/d} \Big)^d - \chi_0 \bigg) > 1 \: \text{ and } \: 1-\frac{3\sqrt{d}}{2} \chi_0^{1/d} > 0
\end{equation*}
and $\delta''$ such that Proposition 1 in \cite{GaudioJin2025} is satisfied. Then, we can take any 
\begin{equation}
\label{eq:chi_delta_conditions_d>=2}
    \chi_0/2 < \chi < \chi_0 \: \text{ and } \: \delta''\chi / (2 \nu_d) < \delta < \delta''\chi / \nu_d,
\end{equation}
where $\delta''$, and thus $\delta$, can be made arbitrarily small. Since the block visibility graph is connected, we can then label the vertices block-by-block, for all $\delta$-occupied blocks. In particular, we find a rooted spanning tree of the block visibility graph, ordering $V^\dagger = \{i_1, i_2, \ldots, i_{|V^\dagger|}\}$ in breadth-first order. For any $i_j \in V^\dagger$, we denote its parent block as $p(i_j)$.

First, we label $V_{i_0}$ (a subset of vertices in the initial block $V_{i_1}$) using the MAP estimator, which computes the labeling that maximizes the posterior likelihood of the observed graph $G$ (i.e. the vertex locations and pairwise edge observations). That is,
\begin{equation}
\label{eq:MAP}
    \hat{\sigma}_{V_{i_0}} = \argmax_{\sigma:V_{i_0} \to Z} \P(\sigma^* = \sigma \mid G).
\end{equation}
We remark that we only label $V_{i_0}$ instead of the entire initial block to maintain runtime efficiency, since the MAP estimator is computed through an exhaustive iteration of all possible labelings. 

Then, we iterate through each $\delta$-occupied block $V_{i_j}$ for $j = 1, \ldots, |V^\dagger|$, labeling each block using the estimated labels of the parent block. We call this technique \textit{propagation}, which was first developed in \cite{Gaudio+2024}. For a given block $V_{i_j}$, we label each vertex $v \in V_{i_j}$ with the community that maximizes the posterior likelihood of the observed edge weights between $v$ and the vertices $V_{p(i_j)}$ in the parent block. That is,
\begin{equation}
\label{eq:propagate}
    \hat{\sigma}(v) = \argmax_{a \in Z} \sum_{u \in V_{p(i_j)}: \hat{\sigma}(u) = b} \log(p_{ab}(x_{uv}; \|u - v\|)).
\end{equation}
Due to the connectivity of the block visibility graph, we can label all $\delta$-occupied blocks via propagation. This produces an almost-exact labeling $\hat{\sigma}: V \to Z$ 
 
Finally, we compute the exact labeling $\tilde{\sigma}: V \to Z$ iteratively for each vertex $v \in V$. Given the initial labeling $\hat{\sigma}$, we compute $\tilde{\sigma}(v)$ by choosing the community which maximizes the posterior likelihood of the observed graph $G$ with respect to the labeling $\hat{\sigma}$. That is,
\begin{equation}
\label{eq:refine_label}
    \tilde{\sigma}(v) = \argmax_{j \in Z} \sum_{u \in V: u \visible v} \log( p_{j, \hat{\sigma}(u)}(x_{uv}; \|u - v\|) ).
\end{equation}
This procedure is inspired by the genie-aided estimator, which labels a vertex knowing the true labels of all other vertices.

We leave the full description of the algorithm for the $d = 1, |\Omega_{\pi, P}| = 1$ case to Appendix \ref{appx:1d_proof_of_exact_recovery}. In summary, the algorithm is quite similar---we first divide $S_{1, n}$ into blocks (though a different number of them), then label the initial blocks using the MAP estimator, label the remaining blocks via propagation, and compute the exact labeling as in \eqref{eq:refine_label}. The main difference is that we have multiple initial blocks because the block visibility graph may be disconnected. \footnote{The reason $d=1$ is a special case is tied to the fact that in a Random Geometric Graph, the connectivity threshold and isolated vertex thresholds coincide for $d \geq 2$ but differ for $d =1$ \cite{Penrose2003} \cite{Zhao2015}.}

\subsection{Proof Sketch}
First, we discuss the proof of exact recovery in the $d \geq 2$ and $d = 1, \lambda r > 1$ cases. Our proof mainly follows the strategy used to prove exact recovery in \cite{Gaudio+2025}, with several adaptions to handle distance-dependent edge weight distributions. First, we note that due to Proposition 1 in \cite{GaudioJin2025}, the choice of $\chi$ and $\delta$ in \eqref{eq:chi_delta_conditions_d=1} and \eqref{eq:chi_delta_conditions_d>=2} ensure that the $(r^d \chi \log n, \delta \log n)$-block visibility graph is connected with high probability. 

Next, we show that the MAP estimator correctly labels all vertices in $V_{i_0}$ with high probability, following the strategy of \cite{Gaudio+2025}. Rather than analyzing the MAP estimator directly, we analyze the restricted maximum likelihood estimator (MLE) because the analysis is more tractable. To define the restricted MLE, let 
\begin{equation}
\begin{aligned}
\label{eq:restricted_labeling_set}
    X_0^*(\epsilon) := &\Big\{\sigma : V_{i_0} \to Z \text{ such that } |\{u \in V_{i_0} : \sigma(u) = j\}| \in \Big((\pi_j - \epsilon)|V_{i_0}|, (\pi_j + \epsilon)|V_{i_0}|\Big) \: \forall j \in Z\Big\},
\end{aligned}
\end{equation}
which is the set of all labelings such that the proportion of vertices in each community is close to the expected proportion. Then, the restricted MLE $\bar{\sigma}$ chooses the labeling within $X_0^*(\epsilon)$ that maximizes the likelihood of the observed graph. That is,
\begin{equation}
\label{eq:restricted_MLE}
    \bar{\sigma} = \argmax_{\sigma \in X_0^*(\epsilon)} \sum_{u \in V_{i_0}} \sum_{v \in V_{i_0}, v \neq u} \log\Big(\overline{p}_{\sigma(u), \sigma(v)}(x_{uv}; \|u -v\|)\Big).
\end{equation}
We then show that for sufficient small $\epsilon$, the restricted MLE labels all vertices in $V_{i_0}$ correctly with high probability. Due to optimality of the MAP estimator, the MAP labels all vertices in $V_{i_0}$ correctly with high probability.

To accomplish this, we fix $\epsilon > 0$ sufficiently small and upper-bound the probability that there exists an incorrect labeling (up to a permissible relabeling) in $X_0^*(\epsilon)$ having a higher posterior likelihood than the ground truth labeling $\sigma^*$---this is the error probability of the MAP estimator. We adapt an idea of Dhara, Gaudio, Mossel, and Sandon \cite{Dhara+2024}, dividing the proof into two cases depending on the \textit{discrepancy} between the incorrect labeling $\sigma$ and $\sigma^*$, which is the number of vertices on which $\sigma$ differs from $\sigma^*$. In the low-discrepancy case, the proof relies on concentration inequalities to directly obtain a high probability upper bound over all low-discrepancy labelings. Meanwhile, in the high-discrepancy case, we first use concentration inequalities to upper-bound the probability for any given high-discrepancy labeling and then use the union bound. We note that Assumptions \ref{assump:regularity_condition}, \ref{assump:bounded_log_likelihood}, and \ref{assump:distinctness} are used here.

Then, we show that during propagation, $\hat{\sigma}$ makes at most $M$ mistakes in each $\delta$-occupied block $V_{i_j}$ for $j = 1, \ldots, |V^\dagger|$, for some constant $M$. We follow the strategy used in \cite{Gaudio+2025} and first use concentration inequalities to show that $\hat{\sigma}$ makes more than $M$ mistakes on any given block with probability $o(1/n)$. Then, we show that $\hat{\sigma}$ makes at most $M$ mistakes on all blocks with high probability using a straightforward calculation, which establishes that $\hat{\sigma}$ achieves almost-exact recovery. We remark that Assumptions \ref{assump:bounded_log_likelihood} and \ref{assump:distinctness} are required here.

Finally, we show that labeling $\tilde{\sigma}$ achieves exact recovery using the approach of \cite{Gaudio+2025}. In particular, we show that the MAP estimator with respect to the initial labeling $\hat{\sigma}$ makes a mistake on a given vertex $v \in V$ with probability $o(1/n)$. To accomplish this, we first show that the MAP estimator with respect to the true labels comes ``close'' to making a mistake on a given vertex $v \in V$ with probability $o(1/n)$. Formally, let $\ell_i(v, \sigma)$ denote the posterior log-likelihood of $v$ having community label $i$ when the remaining vertices are labeled using $\sigma$. That is
\begin{equation}
\label{def:ell}
    \ell_i(v, \sigma) = \sum_{u \in V: u \visible v} \log(\overline{p}_{i, \sigma(u)}(x_{uv};\|u - v\|)).
\end{equation}
Now, suppose that $\sigma^*(v) = i$ and $j \neq i$. We show that when $\lambda \nu_d r^d \min_{i \neq j}D_+(\theta_i \Vert \theta_j; \pi, g) > 1$ (i.e. we are above the information-theoretic threshold), there exists $\epsilon > 0$ such that
\begin{equation}
\label{eq:refine_eq1}
    \P(\ell_i(v, \sigma^*) - \ell_j(v, \sigma^*) \leq \epsilon \log n \mid \sigma^*(v) = i) = o(1/n). 
\end{equation}
Then, we show that the MAP estimator under the initial labeling $\hat{\sigma}$ is close to the MAP estimator under the true labels for all vertices with high probability, meaning that
\begin{equation}
\label{eq:refine_eq2}
    |\ell_i(v, \sigma^*) - \ell_i(v, \hat{\sigma})| \leq \epsilon (\log n) / 2 \:
\end{equation}
for all $v \in V$ with high probability. Combining \eqref{eq:refine_eq1} and \eqref{eq:refine_eq2} shows that $\hat{\sigma}$ makes a mistake on $v$ with probability $o(1/n)$. The union bound then gives us that $\hat{\sigma}$ makes a mistake with probability $o(1)$, therefore achieving exact recovery. We note that Assumption \ref{assump:bounded_log_likelihood} is used here.

We leave the proof for $d = 1, |\Omega_{\pi, P}| = 1$ case to Appendix \ref{appx:1d_proof_of_exact_recovery}; the structure of the proof is the same, but the details are somewhat different.

\section{Proof Sketch of Impossibility}
\label{sec:proof_sketch_impossibility}
In this section, we discuss the impossibility proof, starting with the case where $\lambda \nu_d r^d \min_{i \neq j} D_+(\theta_i \Vert \theta_j ; \pi ,g) < 1$. We split the proof into two further cases: $\lambda \nu_d r^d < 1$ and $\lambda \nu_d r^d \geq 1$. For the first case where $\lambda \nu_d r^d < 1$, we use Theorem 7.1 from Penrose \cite{Penrose2003} to show that there is an isolated vertex with high probability. Consequently, there is a constant nonzero probability that the MAP estimator fails, since it fails to label the isolated vertex correctly with a constant nonzero probability. Since the MAP is optimal, any other estimator fails as well. For the second case where $\lambda \nu_d r^d \geq 1$, we use Proposition 8.1 from \cite{Abbe+2020}.

\begin{lemma}[Proposition 8.1 in \cite{Abbe+2020}]
\label{lem:impossibility_lemma}
    We call a vertex $v \in V$ ``Flip-Bad'' if there exists a community $j \neq \sigma^*(v)$ such that $\ell_j(v, \sigma^*) \geq \ell_{\sigma^*(v)}(v, \sigma^*)$. Let $F_v^H$ denote the event that vertex $v$ is Flip-Bad in graph $H$. Now, suppose that the graph $G$ satisfies the following two conditions:
    \begin{gather}
    \label{eq:impossibility_condition_1} \lim_{n\to\infty} n\E^{0}[\1_{F_0^{G \cup \{0\}}}] = \infty, \text{ and } \\
    \label{eq:impossibility_condition_2} \limsup_{n\to\infty} \frac{\int_{y \in \calS_{d,n}} \E^{0,y}[\1_{F_0^{G \cup \{0, y\}}} \1_{F_y^{G \cup \{0, y\}}} ] \: m_{n,d}(dy)}{n\E^{0}[\1_{F_0^{G \cup \{0\}}}]^2} \leq 1
    \end{gather}
    where $m_{n,d}$ is the Haar measure, $\E^0$ represents the expectation with respect to the graph $G \cup \{0\}$, and $\E^{0,y}$ is the expectation with respect to the graph $G \cup \{0, y\}$. Then, exact recovery is impossible.
\end{lemma}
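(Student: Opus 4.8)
The plan is to prove the lemma by the second moment method applied to the number of Flip-Bad vertices, followed by a genie-aided reduction. Write $X := \sum_{v \in V} \1_{F_v^G}$ for the number of Flip-Bad vertices. Since the MAP estimator minimizes the probability of reconstruction error, it suffices to exhibit a lower bound bounded away from zero on the failure probability of MAP; I would obtain this by showing that under \eqref{eq:impossibility_condition_1} and \eqref{eq:impossibility_condition_2} we have $X \geq 2$ with high probability, and that two sufficiently separated Flip-Bad vertices force any estimator to err.

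For the first moment, I would apply the Mecke equation for the Poisson point process of vertex locations, which shows that $\E[X]$ is proportional to $\int_{\calS_{d,n}} \E^y[\1_{F_y^{G \cup \{y\}}}]\, m_{n,d}(dy)$. Because the toroidal metric makes the model translation invariant, the Palm expectation $\E^y[\cdot]$ coincides with $\E^0[\cdot]$, so $\E[X]$ is proportional to $n\, \E^0[\1_{F_0^{G \cup \{0\}}}]$, which tends to infinity by \eqref{eq:impossibility_condition_1}. For the second factorial moment, the multivariate Mecke equation shows that $\E[X(X-1)]$ is proportional to $\int_{\calS_{d,n}}\int_{\calS_{d,n}} \E^{x,y}[\1_{F_x^{G \cup \{x,y\}}} \1_{F_y^{G \cup \{x,y\}}}]\, m_{n,d}(dx)\, m_{n,d}(dy)$; integrating out the first coordinate by translation invariance reduces this to a quantity proportional to $n\int_{\calS_{d,n}} \E^{0,y}[\1_{F_0^{G \cup \{0,y\}}} \1_{F_y^{G \cup \{0,y\}}}]\, m_{n,d}(dy)$. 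Dividing by $(\E X)^2$, which is proportional to $n^2(\E^0[\1_{F_0^{G \cup \{0\}}}])^2$, and invoking \eqref{eq:impossibility_condition_2} yields $\E[X(X-1)] \leq (1 + o(1))(\E X)^2$.

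Combining the two moment estimates, $\mathrm{Var}(X) = \E[X(X-1)] + \E[X] - (\E X)^2 \leq o((\E X)^2) + \E[X]$, so Chebyshev's inequality gives $\P(X \leq \E[X]/2) = o(1)$. Since $\E[X] \to \infty$, we conclude that $X \to \infty$ in probability; in particular $\P(X \geq 2) \to 1$.

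It remains to deduce impossibility from the abundance of Flip-Bad vertices, which I expect to be the main obstacle. The idea is that since the visibility radius is only $\Theta((\log n)^{1/d})$ while the Flip-Bad vertices are spread over a region of volume $n$, a greedy selection extracts two Flip-Bad vertices $u, v$ that are not mutually visible; conditioned on $G$ and on the labels of all vertices other than $u$ and $v$, their likelihoods then factorize, so $\sigma^*(u)$ and $\sigma^*(v)$ are conditionally independent. Being Flip-Bad, each has a competing label that is at least as likely as the truth, so the conditional posterior does not strictly favor $\sigma^*(u)$ (respectively $\sigma^*(v)$); hence any estimator---even the genie-aided one that knows all other labels---recovers each correctly with conditional probability at most $1/2 + o(1)$, and by independence recovers both with probability at most $1/4 + o(1)$. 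The delicate points are (i) verifying that a single-vertex flip is never undone by a global permissible relabeling $\omega \in \Omega_{\pi,P}$, since $\omega \circ \sigma^*$ differs from $\sigma^*$ on $\Theta(n)$ vertices when $\omega \neq \mathrm{id}$ whereas the flip differs on one, so the ambiguity is genuine with respect to the agreement $A(\cdot,\cdot)$; (ii) translating the likelihood inequality defining Flip-Bad into the claimed bound on the conditional posterior; and (iii) ensuring the selected pair is non-visible so that conditional independence holds. Since exact recovery requires the success probability to tend to $1$, the bound $1/4 + o(1)$ shows it is impossible.
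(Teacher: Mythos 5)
A preliminary remark on the comparison you asked for: the paper does not prove this lemma at all --- it is imported as Proposition 8.1 of \cite{Abbe+2020} and used as a black box, the paper's own contribution being the verification of hypotheses \eqref{eq:impossibility_condition_1} and \eqref{eq:impossibility_condition_2} in Lemmas \ref{lem:impossibility_lemma_1} and \ref{lem:impossibility_lemma_2}. So you are reproving a cited result. Your second-moment half is correct and is exactly the mechanism of the cited proof: the (multivariate) Mecke equation together with translation invariance on the torus identifies $\E[X]$ with $\lambda n \E^{0}[\1_{F_0^{G \cup \{0\}}}]$ and $\E[X(X-1)]$ with $\lambda^2 n \int \E^{0,y}[\1_{F_0}\1_{F_y}]\, m_{n,d}(dy)$, and Chebyshev then gives $X \to \infty$ in probability.

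The reduction from ``many Flip-Bad vertices'' to ``exact recovery fails'' is where the actual content of the proposition lies, and your sketch has genuine gaps there. (a) \emph{Selection bias}: $F_u$ is defined in terms of the realized label $\sigma^*(u)$, so once you select $u,v$ \emph{because} they are Flip-Bad, the conditional law of $(\sigma^*(u),\sigma^*(v))$ given $(G,\sigma^*_{-\{u,v\}})$ is the product posterior further conditioned on $F_u \cap F_v$, not the product posterior itself; the claim that any estimator is then correct on each vertex with conditional probability at most $1/2$ does not follow as stated (it is true for the genie-MAP after a computation you have not done, but an estimator that outputs a non-maximal label can be correct with conditional probability $1$ given $F_u$). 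Relatedly, the Flip-Bad condition compares likelihoods without the prior terms $\log\pi_j$, so the competing labeling's posterior ratio is only $\geq \pi_j/\pi_i$, not $\geq 1$; this constant must be tracked. (b) \emph{Non-visibility}: $X\to\infty$ in probability, possibly very slowly, does not produce two Flip-Bad vertices with $u \nvisible v$; a single visibility ball holds $\Theta(\log n)$ vertices, and nothing you prove rules out all Flip-Bad vertices lying in one ball, so the conditional-independence step is not secured. (c) The passage from ``any estimator errs on the selected (random, realization-dependent) pair with probability $\geq 3/4$'' to ``exact recovery is impossible'' is asserted rather than derived; it has to be routed through a bound on $\E_G[\max_C \P(\sigma^* \in C \mid G)]$ over permissible-relabeling classes $C$. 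The argument in \cite{Abbe+2020} sidesteps all three issues: on $\{X \geq 1\}$ there is a single-flip competitor $\sigma'$ with posterior at least that of $\sigma^*$; since two distinct labelings in the same permissible class differ on $\Theta(n)$ vertices w.h.p.\ while single flips differ on one, the map $\sigma \mapsto \sigma'$ is injective on each class, and summing posterior masses over the MAP class yields $\max_C \P(\sigma^* \in C \mid G) \leq 1/2 + o(1)$ in expectation --- no second Flip-Bad vertex, no non-visible pair, and no conditioning on the selection are needed. As written, your final step is the part that would fail.
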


To prove \eqref{eq:impossibility_condition_1}, we adapt the proof of Lemma B.4 in \cite{Gaudio+2025}. We lower-bound the probability that the vertex $0$ is Flip-Bad; in particular, we show that $\P(0 \text{ is Flip-Bad in } G \cup \{0\}) \geq  n^{-1 + \beta}$ for some constant $\beta > 0$, which implies \eqref{eq:impossibility_condition_1}. To compute this lower bound, we use Cram\'er's Theorem from large deviations theory. We note that the condition $\lambda \nu_d r^d \min_{i \neq j} D_+(\theta_i\Vert \theta_j ; \pi ,g) < 1$ (i.e. being below the information-theoretic threshold) is used here. To prove $\eqref{eq:impossibility_condition_2}$, we use a straightforward calculation that is essentially identical to that of Lemma B.5 in \cite{Gaudio+2025}.

We now discuss the impossibility proof when $d = 1, \lambda r < 1, \text{ and } |\Omega_{\pi, P}| \geq 2$. We define the \textit{vertex visibility graph} of $G$ as the graph $G' = (V, E')$ where $E' = \{\{u, v\}: u,v \in V, \|u - v\| \leq r(\log n)^{1/d} \}$, i.e. the graph formed from taking all vertices of $G$ and adding an edge between any pair of vertices which are visible to each other. From Lemma 15 in \cite{GaudioJin2025}, we obtain that the vertex visibility graph of $G$ is disconnected with high probability. This makes exact recovery impossible because we can permute the labels in one connected component (but not other components) via a non-trivial permissible relabeling, which yields an incorrect labeling with the same posterior likelihood as $\sigma^*$. Since the MAP estimator cannot differentiate between this incorrect labeling and $\sigma^*$, the MAP will be incorrect with constant probability.

\section{Discussion}
\label{sec:discussion}
In this paper, we identified the information-theoretic threshold for exact recovery in the distance-dependent GHCM under some technical conditions. In addition, we provided an efficient algorithm which achieves exact recovery above this threshold.

One limitation of this paper is that we do not handle the case where two pairwise distributions $P_{ij}$, $P_{ab}$ are equal for some distances and not equal for other distances. This case would be challenging for the MAP estimator and for propagation if, for example, the distributions coincide nearby.
Another related direction for future work is considering the case where the pairwise distributions have different ``radii''. For instance, labels $(i, j)$ could have information up to a larger radius $r_1$ while $(i', j')$ only have information up to a smaller radius $r_2$.

In addition, we assume that the vertex locations and parameters of the model are known. It would be interesting to consider the question where the distribution family for the pairwise distributions is known, but the parameters are not; this question has been studied in the SBM, for instance by Abbe and Sandon \cite{AbbeSandon2015b}, but not for spatially-embedded community models such as the GSBM or GHCM.

\bibliographystyle{plain}
\bibliography{Refs}

\appendix
\section{Preliminaries}
We record some useful definitions, concentration inequalities, and results from large deviations here.

\begin{lemma}[Chernoff Bound, Poisson \cite{Lugosi+2013}]
\label{lem:poisson_Chernoff}
    Let $X \sim \text{Pois}(\mu)$ where $\mu > 0$. Then, for any $t > 0$,
    \begin{equation*}
        \P(X \geq \mu + t) \leq \exp\left(-\frac{t^2}{2(\mu + t)}\right)
    \end{equation*}
    and
    \begin{equation*}
        \P(X \leq \mu - t) \leq \exp \left(- (\mu - t) \log \left(1 - \frac{t}{\mu}\right) -t \right).
    \end{equation*}
\end{lemma}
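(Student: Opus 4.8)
The plan is to prove both tail bounds by the standard Chernoff (exponential Markov) method, exploiting the explicit Poisson moment generating function $\E[e^{sX}] = \exp(\mu(e^s - 1))$. For the upper tail I would fix $s > 0$ and write $\P(X \geq \mu + t) \leq e^{-s(\mu+t)}\E[e^{sX}] = \exp(\mu(e^s-1) - s(\mu+t))$. Minimizing the exponent over $s$ gives the stationarity condition $\mu e^s = \mu + t$, i.e. $s^* = \log(1 + t/\mu)$, and substituting back yields the exact Chernoff rate $\P(X \geq \mu + t) \leq \exp(t - (\mu+t)\log(1 + t/\mu))$. For the lower tail I would run the same argument with the parameter $-s$ for $s > 0$: $\P(X \leq \mu - t) \leq e^{s(\mu - t)}\E[e^{-sX}] = \exp(\mu(e^{-s} - 1) + s(\mu - t))$, whose optimum occurs at $s^* = -\log(1 - t/\mu)$ (which requires $t < \mu$, the implicit hypothesis needed for the logarithm to be defined). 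Substituting back gives exactly $\exp(-(\mu - t)\log(1 - t/\mu) - t)$, which is precisely the stated lower-tail bound, so no further work is needed on that side.

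The remaining task, and the only place where any approximation enters, is to upgrade the exact upper-tail rate to the cleaner quadratic form. Setting $x := t/\mu \geq 0$ and factoring out $\mu$, it suffices to prove the scalar inequality $(1+x)\log(1+x) - x \geq \frac{x^2}{2(1+x)}$ for all $x \geq 0$, since multiplying through by $\mu$ recovers $t - (\mu+t)\log(1+t/\mu) \leq -\frac{t^2}{2(\mu+t)}$. I would establish this by defining $\psi(x) := (1+x)\log(1+x) - x - \frac{x^2}{2(1+x)}$ and verifying $\psi(0) = 0$, $\psi'(0) = 0$, and $\psi''(x) \geq 0$ on $[0,\infty)$. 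A short computation gives $\frac{d}{dx}\big[\frac{x(2+x)}{2(1+x)^2}\big] = (1+x)^{-3}$, whence $\psi''(x) = \frac{1}{1+x} - \frac{1}{(1+x)^3} = \frac{x(2+x)}{(1+x)^3} \geq 0$. Convexity on $[0,\infty)$ together with the vanishing value and first derivative at the origin forces $\psi \geq 0$, which completes the upper-tail bound.

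I expect the main, though still elementary, obstacle to be verifying this scalar inequality cleanly: the tempting direct algebraic manipulations become messy, whereas the second-derivative route is short precisely because the derivative of $\frac{x(2+x)}{2(1+x)^2}$ collapses to the clean form $(1+x)^{-3}$. Beyond that, the only points of care are to record the hypothesis $t < \mu$ for the lower-tail statement, and to confirm that the Chernoff optimizers $s^*$ are genuinely positive in each case (equivalently, that $t > 0$), so that the one-sided exponential Markov inequality is applied in the valid regime.
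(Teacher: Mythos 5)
Your proof is correct: both tail bounds follow from the exponential Markov/Chernoff method with the Poisson MGF exactly as you describe, your optimizations of $s$ are valid in the stated regimes, and the scalar inequality $(1+x)\log(1+x)-x\geq \tfrac{x^2}{2(1+x)}$ is verified correctly via $\psi(0)=\psi'(0)=0$ and $\psi''\geq 0$. The paper does not prove this lemma itself but cites it to the concentration-inequalities literature, where the derivation is precisely this standard Cram\'er--Chernoff argument, so your approach matches the intended one (including the correctly noted implicit hypothesis $t<\mu$ for the lower tail).
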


\begin{lemma}[Chernoff Bound, Binomial]
\label{lem:binomial_Chernoff}
    Let $X_i: i=1, \ldots, n$ be independent Bernoulli random variables. Let $X = \sum_{i=1}^n X_i$ and $\mu = \E[X]$. Then, for any $t > 0$,
    \begin{equation*}
        \P(X \geq (1 + t)\mu) \leq \left(\frac{e^t}{(1+t)^{(1+t)}}\right)^\mu.
    \end{equation*}
    Also, for any $0 < t < 1$,
    \begin{equation*}
        \P(X \leq (1 - t)\mu) \leq \left(\frac{e^{-t}}{(1-t)^{(1-t)}}\right)^\mu
    \end{equation*}
\end{lemma}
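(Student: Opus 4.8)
The plan is to use the standard Chernoff (exponential moment) method: bound the tail probability by the moment generating function via Markov's inequality, factor the MGF using independence, apply the inequality $1 + x \le e^x$, and then optimize the free exponential parameter. I would handle the two tails separately but symmetrically.

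For the upper tail, fix $s > 0$ and write $\P(X \ge (1+t)\mu) = \P(e^{sX} \ge e^{s(1+t)\mu})$, then apply Markov to get $\P(X \ge (1+t)\mu) \le e^{-s(1+t)\mu}\,\E[e^{sX}]$. By independence, $\E[e^{sX}] = \prod_{i=1}^n \E[e^{sX_i}]$, and writing $p_i = \P(X_i = 1)$ gives $\E[e^{sX_i}] = 1 + p_i(e^s - 1) \le \exp(p_i(e^s - 1))$. Multiplying and using $\sum_i p_i = \mu$ yields $\E[e^{sX}] \le \exp((e^s - 1)\mu)$, so $\P(X \ge (1+t)\mu) \le \exp\bigl(\mu[(e^s-1) - s(1+t)]\bigr)$. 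The exponent is minimized at $e^s = 1+t$, i.e.\ $s = \log(1+t) > 0$, which is admissible since $t > 0$; substituting back gives exponent $\mu[t - (1+t)\log(1+t)]$, which is exactly $\log\bigl(e^t/(1+t)^{1+t}\bigr)^\mu$, establishing the first bound.

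For the lower tail I would apply the same method to $-X$: for $s > 0$, $\P(X \le (1-t)\mu) = \P(e^{-sX} \ge e^{-s(1-t)\mu}) \le e^{s(1-t)\mu}\,\E[e^{-sX}]$. The analogous factorization gives $\E[e^{-sX}] \le \exp((e^{-s}-1)\mu)$, so $\P(X \le (1-t)\mu) \le \exp\bigl(\mu[(e^{-s}-1) + s(1-t)]\bigr)$. Minimizing over $s$ gives $e^{-s} = 1-t$, i.e.\ $s = -\log(1-t)$, which is positive precisely because $0 < t < 1$ (this is where that hypothesis is needed). Substituting gives exponent $\mu[-t - (1-t)\log(1-t)] = \log\bigl(e^{-t}/(1-t)^{1-t}\bigr)^\mu$, yielding the second bound.

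There is no serious obstacle here, as this is a textbook argument; the only points requiring minor care are verifying that the optimal $s$ is strictly positive in each case (so that the Markov step direction is valid), which is exactly where the sign conditions $t > 0$ and $0 < t < 1$ enter, and confirming the algebraic identity converting the exponentiated form $\exp(\mu[\cdots])$ into the stated product form. Both are routine, so I would present the derivation cleanly and note that the lower-tail restriction $t < 1$ is essential to keep $1-t$ positive inside the logarithm.
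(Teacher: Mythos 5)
Your derivation is correct and complete: the Markov/MGF argument with the bound $1+x\le e^x$ and the optimal choices $s=\log(1+t)$ and $s=-\log(1-t)$ is exactly the standard proof of this inequality, and your bookkeeping of where $t>0$ and $0<t<1$ are needed is right. The paper states this lemma as a known textbook fact without proof, so there is nothing to compare against beyond noting that your argument is the canonical one.
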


\begin{definition}[Rate Function]
    Let $X$ be any random variable. We define the rate function of $X$, denoted $\Lambda_X^*$, as
    \begin{equation*}
        \Lambda_X^*(\alpha) := \sup_{t \in \R} (t\alpha - \Lambda_X(t))
    \end{equation*}
    where $\Lambda_X(t) := \log \E[\exp(tX)]$ is the cumulant generating function of $X$.
\end{definition}

\begin{lemma}[Cram\'er's Theorem]
\label{lem:Cramer}
Let $\{X_i\}$ be a sequence of i.i.d. random variables with rate function $\Lambda_X^*$. Then, for all $\alpha > \E[X_1]$,
    \begin{equation*}
        \lim_{m \to \infty} \frac{1}{m} \log \P(\sum_{i=1}^m X_i \geq m\alpha) = -\Lambda^*_X(\alpha).
    \end{equation*}
\end{lemma}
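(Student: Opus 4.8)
The plan is to establish the limit by proving matching asymptotic upper and lower bounds, the classical two-sided strategy for Cram\'er's theorem. Writing $S_m = \sum_{i=1}^m X_i$, I would first record the structural fact that for $\alpha > \E[X_1]$ the supremum defining $\Lambda_X^*(\alpha)$ is attained at some $t^* \geq 0$: since $\Lambda_X$ is convex with $\Lambda_X'(0) = \E[X_1] < \alpha$, the map $t \mapsto t\alpha - \Lambda_X(t)$ is strictly increasing at $t = 0$, so its maximizer satisfies $t^* > 0$ and, in the regular case, $\Lambda_X'(t^*) = \alpha$. This observation is what allows a one-sided tail probability to see the full rate function rather than only its restriction to nonnegative $t$.

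For the upper bound I would apply the exponential Markov (Chernoff) inequality: for every $t \geq 0$,
\[
\P(S_m \geq m\alpha) \leq e^{-tm\alpha}\,\E[e^{tS_m}] = e^{-m(t\alpha - \Lambda_X(t))},
\]
where independence gives $\E[e^{tS_m}] = (\E[e^{tX_1}])^m$. Taking $\tfrac1m\log$ and optimizing over $t \geq 0$ yields $\limsup_m \tfrac1m \log \P(S_m \geq m\alpha) \leq -\Lambda_X^*(\alpha)$, the restriction to $t \geq 0$ costing nothing by the previous paragraph.

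For the lower bound, which is the substantive direction, I would use an exponential change of measure. Define the tilted law $d\tilde{P}(x) = e^{t^* x - \Lambda_X(t^*)}\,dP(x)$, under which the $X_i$ remain i.i.d. with mean $\tilde{\E}[X_1] = \Lambda_X'(t^*) = \alpha$. Rewriting $\P(S_m \geq m\alpha)$ as an integral against $\tilde{P}$ and restricting to the window $\{m\alpha \leq S_m \leq m\alpha + m\epsilon\}$ gives
\[
\P(S_m \geq m\alpha) \geq e^{m\Lambda_X(t^*)}\, e^{-t^*(m\alpha + m\epsilon)}\;\tilde{\P}\big(\alpha \leq \tfrac{S_m}{m} \leq \alpha + \epsilon\big).
\]
Because this window is centered at the $\tilde{P}$-mean $\alpha$, the weak law of large numbers (or the central limit theorem) shows $\tilde{\P}(\alpha \leq S_m/m \leq \alpha + \epsilon) \to c > 0$, so its $\tfrac1m\log$ vanishes. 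Taking $\tfrac1m\log$, then $m \to \infty$, then $\epsilon \to 0$ produces $\liminf_m \tfrac1m\log \P(S_m \geq m\alpha) \geq \Lambda_X(t^*) - t^*\alpha = -\Lambda_X^*(\alpha)$, matching the upper bound and completing the proof.

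The main obstacle is the regularity needed to justify the tilting step: one must guarantee that a finite maximizer $t^*$ with $\Lambda_X'(t^*) = \alpha$ exists and lies in the interior of the region where $\Lambda_X$ is finite and differentiable. This holds whenever $\alpha$ is interior to the range of $\Lambda_X'$, but degenerates at boundary values (for instance as $\alpha \uparrow \esssup X_1$) or when $\Lambda_X$ fails to be steep. I would treat these cases separately, either by a truncation/approximation argument that tilts toward a nearby attainable mean and passes to the limit, or, since this lemma is only a recorded preliminary, by invoking the standard formulation of Cram\'er's theorem (e.g.\ Dembo--Zeitouni), under which the stated identity holds for every $\alpha > \E[X_1]$ with the convention that both sides may equal $-\infty$ when $\alpha$ exceeds the essential supremum.
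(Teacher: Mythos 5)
The paper does not prove this lemma at all: it is recorded in the preliminaries as a classical result from large deviations theory and invoked as a black box (in Lemma \ref{lem:impossibility_lemma_1}), so there is no in-paper argument to compare against. Your sketch is the standard textbook proof --- Chernoff/exponential-Markov for the upper bound, exponential tilting plus the weak law under the tilted measure for the lower bound --- and it is correct in substance; you also correctly identify the only real delicacy, namely the existence of an interior maximizer $t^*$ with $\Lambda_X'(t^*) = \alpha$ (steepness and the boundary case $\alpha \geq \esssup X_1$), and your fallback of invoking the general formulation in Dembo--Zeitouni is exactly how the paper itself treats the statement. One minor point worth noting: the one observation your argument genuinely needs beyond the generic LDP is that for $\alpha > \E[X_1]$ the infimum of $\Lambda_X^*$ over $[\alpha,\infty)$ equals $\Lambda_X^*(\alpha)$ (monotonicity of the rate function to the right of the mean), which you implicitly use when matching the open-set lower bound to the closed-set upper bound at the single point $\alpha$; making that explicit would close the sketch.
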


Suppose that $P(y)$ and $Q(y)$ are two sets of distributions indexed by a parameter $y \in [0, r]$, with corresponding PMFs/PDFs $p(\cdot; y)$ and $q(\cdot; y)$. We define
\begin{equation*}
    \phi_t(P(y), Q(y)) := \E_{X \sim q} \bigg[\bigg(\frac{p(X;y)}{q(X;y)}\bigg)^t\bigg]
\end{equation*}
and 
\begin{equation*}
    \overline{\phi}_t(P, Q) := \int_0^r \phi_t(P(y),Q(y) ) \frac{dy^{d-1}}{r^d} dy.
\end{equation*}
Furthermore, for any two communities $i, j \in Z$ we define 
\begin{equation}
\label{def:t_ij}
    t_{ij} := \argmin_{t \in [0, 1]} \sum_{a \in Z} \pi_a \overline{\phi}_t(P_{ia}, P_{ja})
\end{equation}
Using these definitions, we can express the CH-divergence between $\theta_i$ and $\theta_j$ (defined just before the statement of Theorem \ref{thm:impossibility}) as 
\begin{equation}
\label{eq;CH_divergence_alternate}
D_+(\theta_i \Vert \theta_j; \pi, g) = 1 - \sum_{a \in Z} \pi_a \overline{\phi}_{t_{ij}}(P_{ia}, P_{ja}).
\end{equation}

\FloatBarrier
\section{Exact Recovery Algorithm}
\label{appx:full_exact_recovery_algorithm}
For completeness, we record the exact recovery algorithm, which is a minor adaptation of \cite{Gaudio+2025}. Algorithm \ref{alg:exact_recovery} relies on three subroutines given by Algorithms \ref{alg:MAP}, \ref{alg:propagate}, and \ref{alg:Refine}.

\begin{algorithm}
\caption{Exact Recovery for $d \geq 2$ and $d = 1$, $\lambda r > 1$}
\label{alg:exact_recovery}
\begin{algorithmic}[1]
\Require $G \sim \text{GHCM}(\lambda, n, r, \pi, P(y), d)$.
\Ensure An estimated community labeling $\tilde{\sigma}: V \to Z$.
\State \textbf{Phase I:}
\State Take $\chi, \delta > 0$ which satisfy \eqref{eq:chi_delta_conditions_d=1} if $d = 1$ and \eqref{eq:chi_delta_conditions_d>=2} if $d \geq 2$. \label{line:pick_chi_delta}
\State Partition $\calS_{d, n}$ into $n / (r^d \chi \log n)$ blocks of volume $r^d \chi \log n$. Let $B_i$ be the $i$th block and $V_i$ be the set of vertices in $B_i$ for $i \in [n / (r^d \chi \log n)]$. \label{line:construct_blocks}
\State Construct the $(r^d \chi \log n, \delta \log n)$-visibility graph $H = (V^\dagger, E^\dagger)$ of $G$. \label{line:construct_visibility_graph}
\If{$H$ is disconnected}
    \State Return \texttt{FAIL}.
\EndIf
\State Find a rooted spanning tree of $H$, ordering $V^\dagger = \{i_1, i_2, \ldots, i_{|V^\dagger|}\}$ in breadth-first order.
\State Set $\epsilon_0 \leq \min\{1/(2 \log k), \delta\}$. Sample $V_{i_0} \subset V_{i_1}$ such that $|V_{i_0}| = \epsilon_0 \log n$. Set $V_{i_1} \gets V_{i_1} \setminus V_{i_0}$.
\State Apply \texttt{Maximum a Posteriori} (Algorithm \ref{alg:MAP}) on input $(G, V_{i_0})$ to obtain the labeling $\hat{\sigma}$ on $V_{i_0}$.
\For{$j = 1, 2, \ldots, |V^\dagger|$}
    \State Apply \texttt{Propagate} (Algorithm \ref{alg:propagate}) on input $(G, V_{p(i_j)}, V_{i_j}, \hat{\sigma})$ to obtain the labeling $\hat{\sigma}$ on $V_{i_j}$.
\EndFor
\For{$v \in V \setminus \cup_{i \in V^\dagger} V_i$}
    \State Set $\hat{\sigma}(v) = *$.
\EndFor
\State \textbf{Phase II:}
\For{$v \in V$}
    \State Apply \texttt{Refine} (Algorithm \ref{alg:Refine}) on input $(G, v, \hat{\sigma})$ to compute $\tilde{\sigma}(v)$.
\EndFor
\end{algorithmic}
\end{algorithm}

\begin{algorithm}
\caption{\texttt{Maximum a Posteriori (MAP)}}
\label{alg:MAP}
\begin{algorithmic}[1]
\Require Graph $G = (V, E)$ and vertex set $S \subset V$.
\Ensure An estimated labeling $\hat{\sigma}_S: S \to Z$.
\State Set 
\begin{equation*}
    \hat{\sigma}_S = \argmax_{\sigma:S \to Z} \P(\sigma^* = \sigma \mid G).
\end{equation*}
\end{algorithmic}
\end{algorithm}

\begin{algorithm}
\caption{\texttt{Propagate}}
\label{alg:propagate}
\begin{algorithmic}[1]
\Require Graph $G = (V, E)$, disjoint vertex sets $S, S' \subset V$ which are mutually visible, and labeling $\hat{\sigma}_S : S \to Z$.
\Ensure An estimated labeling $\hat{\sigma}_{S'}: S' \to Z$.
\State Let $i := \argmax_{j \in Z} |\{u \in S: \hat{\sigma}_S(u) = j\}|$ be the largest community in $S$.
\For{$v \in S'$}
    \State Set 
\begin{equation*}
    \hat{\sigma}_{S'}(v) = \argmax_{j \in Z} \sum_{u \in S: \hat{\sigma}(u) = i} \log(p_{ij}(x_{uv}; \|u - v\|)).
\end{equation*}
\EndFor
\end{algorithmic}
\end{algorithm}

\begin{algorithm}
\caption{\texttt{Refine}}
\label{alg:Refine}
\begin{algorithmic}[1]
\Require $G \sim \text{GHCM}(\lambda, n, r, \pi, P(y), d)$, vertex $v \in V$, labeling $\hat{\sigma}:V \to Z \cup \{*\}$.
\Ensure An estimated labeling $\tilde{\sigma}(v) \in Z$.
\State Set 
\begin{equation*}
    \tilde{\sigma}(v) = \argmax_{j \in Z} \sum_{u \in V: u \visible v} \log( p_{j, \hat{\sigma}(u)}(x_{uv}; \|u - v\|) )
\end{equation*}
\end{algorithmic}
\end{algorithm}
\FloatBarrier

\section{Proof of Exact Recovery}
\label{appx:proof_of_exact_recovery}
In this section, we will show that the Algorithm \ref{alg:exact_recovery} achieves exact recovery. We first provide a preliminary result that we will use throughout this section. Recall that we use the notation $P_{ij} \not \equiv P_{ab}$ to mean $P_{ij}(y) \not \equiv P_{ab}(y)$ for almost all $y \in [0, r]$.
\begin{lemma}
\label{lem:phi_property}  
    For any fixed $i, j, a, b \in Z$ such that $P_{ij} \not \equiv P_{ab}$, we define the set
    \begin{equation}
    \label{eq:def_S_prime}
        S'(\gamma; P_{ij}, P_{ab}, t) :=  \{y \in [0, r] : \phi_t(P_{ij}(y), P_{ab}(y) ) \geq 1 - \gamma\},
    \end{equation}
    which is the set of $y \in [0, r]$ such that $\phi_t(P_{ij}(y), P_{ab}(y))$ is close to 1. Then, we define
    \begin{equation}
    \label{eq:def_S}
        S(\gamma; t) := \bigcup_{\substack{i,j,a,b \in Z \\ P_{ij} \not \equiv P_{ab}}} S'(\gamma; P_{ij}, P_{ab}, t) = \bigcup_{\substack{i,j,a,b \in Z \\ P_{ij} \not \equiv P_{ab}}} \{y \in [0, r] : \phi_t(P_{ij}(y), P_{ab}(y) ) \geq 1 - \gamma\}.
    \end{equation}
    Then, for any $\epsilon > 0$ and $t \in (0, 1)$, there exists $\gamma > 0$ such that $|S(\gamma;t)| < \epsilon$.
\end{lemma}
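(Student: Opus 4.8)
The plan is to exploit the elementary but decisive fact that the Chernoff coefficient $\phi_t(P(y),Q(y))$ is bounded above by $1$ for every $t \in (0,1)$, with equality exactly when the two distributions coincide. Writing $\phi_t(P(y),Q(y)) = \E_{X \sim q}[(p(X;y)/q(X;y))^t] = \sum_x p(x;y)^t q(x;y)^{1-t}$ in the discrete case (and the analogous integral in the continuous case), Hölder's inequality with the conjugate exponents $1/t$ and $1/(1-t)$ gives $\sum_x p(x;y)^t q(x;y)^{1-t} \le (\sum_x p(x;y))^t (\sum_x q(x;y))^{1-t} = 1$. The equality case of Hölder forces $p(\cdot;y)$ and $q(\cdot;y)$ to be proportional, and since both are probability distributions this means $P(y) = Q(y)$. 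Thus $\phi_t(P(y),Q(y)) = 1$ if and only if $P(y) = Q(y)$, and $\phi_t(P(y),Q(y)) < 1$ strictly otherwise. This is precisely where the hypothesis $t \in (0,1)$ (open interval) is used, so that the conjugate exponents are finite.

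Next I would fix a single quadruple $(i,j,a,b)$ with $P_{ij} \not\equiv P_{ab}$ and analyze $S'(\gamma; P_{ij},P_{ab},t)$ on its own. By Assumption~\ref{assump:regularity_condition}, the relation $P_{ij} \not\equiv P_{ab}$ means that $\{y \in [0,r] : P_{ij}(y) = P_{ab}(y)\}$ has Lebesgue measure zero. Combining this with the equality characterization from the previous step, the set $\{y \in [0,r] : \phi_t(P_{ij}(y),P_{ab}(y)) = 1\}$ is also null. The key structural observation is that the sets $S'(\gamma; P_{ij},P_{ab},t)$ are nested and shrink to exactly this null set as $\gamma \downarrow 0$: because $\phi_t \le 1$ everywhere, a point $y$ lies in $S'(\gamma; P_{ij},P_{ab},t)$ for all $\gamma > 0$ if and only if $\phi_t(P_{ij}(y),P_{ab}(y)) \ge 1$, i.e. $\phi_t(P_{ij}(y),P_{ab}(y)) = 1$. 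Hence $\bigcap_{\gamma > 0} S'(\gamma; P_{ij},P_{ab},t)$ is the null set just identified.

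With this in hand I would invoke continuity of Lebesgue measure from above. Since every set lives inside $[0,r]$, which has finite measure $r$, and since $S'(\gamma'; P_{ij},P_{ab},t) \subseteq S'(\gamma; P_{ij},P_{ab},t)$ whenever $\gamma' \le \gamma$, continuity from above yields $\lim_{\gamma \downarrow 0} |S'(\gamma; P_{ij},P_{ab},t)| = |\bigcap_{\gamma > 0} S'(\gamma; P_{ij},P_{ab},t)| = 0$ for each fixed quadruple. Finally, $S(\gamma;t)$ is a union of at most $k^4$ such sets, so subadditivity gives $|S(\gamma;t)| \le \sum_{i,j,a,b : P_{ij} \not\equiv P_{ab}} |S'(\gamma; P_{ij},P_{ab},t)|$, a finite sum each of whose terms tends to $0$ as $\gamma \downarrow 0$. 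Consequently, for any $\epsilon > 0$ one can choose $\gamma > 0$ small enough that $|S(\gamma;t)| < \epsilon$, which is the claim.

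The Hölder bound and the measure-continuity step are routine. The two points requiring genuine care are the equality case of Hölder---needed to upgrade $\phi_t \le 1$ to the \emph{strict} inequality $\phi_t < 1$ off the null set, without which the intersection would not be null---and the measurability of $y \mapsto \phi_t(P_{ij}(y),P_{ab}(y))$, which is what makes $S'$ and $S$ measurable in the first place. The latter follows from joint measurability of the densities $(x,y) \mapsto p_{ij}(x;y)$ together with Tonelli's theorem, a mild regularity property that I would state explicitly at the outset. I expect the equality-case bookkeeping, rather than any hard estimate, to be the main thing to get right.
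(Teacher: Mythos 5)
Your proposal is correct and follows essentially the same route as the paper's proof: identify $\bigcap_{\gamma>0} S'(\gamma;P_{ij},P_{ab},t)$ with the null set $\{y:\phi_t(P_{ij}(y),P_{ab}(y))=1\}$, apply continuity of measure from above on the finite-measure space $[0,r]$, and finish with finite subadditivity over the at most $k^4$ quadruples. The only difference is that you make explicit, via the equality case of H\"older, the fact that $\phi_t\le 1$ with equality iff the distributions coincide --- a step the paper uses implicitly when it asserts that $\{y:\phi_t=1\}$ has measure zero --- which is a worthwhile clarification but not a different argument.
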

\begin{proof}
    First, we show that for any $P_{ij} \not \equiv P_{ab}$, we have $\lim_{\gamma \to 0} |S'(\gamma; P_{ij}, P_{ab}, t)| = 0$. Let $\gamma_n$ be a monotone decreasing sequence such that $\lim_{n \to \infty} \gamma_n = 0$. Observing that $S'(\gamma_n; P_{ij}, P_{ab}, t)$ is a monotone decreasing sequence with $S'(\gamma_1; P_{ij}, P_{ab}, t) < \infty$, we have
    \begin{equation*}
        \bigg|\bigcap_{n=1}^\infty S'(\gamma_n; P_{ij}, P_{ab}, t)\bigg| = \lim_{n \to \infty} |S'(\gamma_n; P_{ij}, P_{ab}, t)|
    \end{equation*}
    by continuity of measure. Computing the left-hand side, we obtain that
    \begin{equation*}
    \begin{aligned}
        \bigcap_{n=1}^\infty S'(\gamma_n; P_{ij}, P_{ab}, t) &= \bigcap_{n=1}^\infty \{y \in [0, r]: \phi_t(P_{ij}(y), P_{ab}(y)) \geq 1 - \gamma_n\} \\
        &= \{y \in [0, r]: \phi_t(P_{ij}(y), P_{ab}(y)) \geq 1 - \gamma_n \: \forall n \in \N\} \\
        &= \{y \in [0, r]: \phi_t(P_{ij}(y), P_{ab}(y)) = 1\} \\
    \end{aligned}
    \end{equation*}
    which has measure zero since $P_{ij}(y) \not\equiv P_{ab}(y)$ for almost all $y \in [0, r]$. Hence, we obtain that the limit $\lim_{n \to \infty} |S'(\gamma_n; P_{ij}, P_{ab}, t)| = 0$. Since this is true for any sequence $\gamma_n \to 0$, we therefore have that $\lim_{\gamma \to 0} |S'(\gamma; P_{ij}, P_{ab}, t)| = 0$. Finally, observe that
    \begin{align*}
        \lim_{\gamma \to 0} |S(\gamma; t)| \leq \lim_{\gamma \to 0} \sum_{\substack{i,j,a,b \in Z \\ P_{ij} \not \equiv P_{ab}}} |S'(\gamma; P_{ij}, P_{ab}, t)| = 0,
    \end{align*}
    which completes the proof.
\end{proof}

Lemma \ref{lem:phi_property} implies the following corollary. We let $B_{r,d}$ denote the $d$-dimensional ball of radius $r$ centered at the origin.
\begin{corollary}
\label{cor:distinguishing_vertices}
Let $A$ be a fixed region contained within $B_{r,d}$. Let $D$ be the distance to the origin of a point placed uniformly at random in $A$. Then for any $\epsilon > 0$ and $t \in (0,1)$, there exists $\gamma > 0$ such that the probability that $\phi_t(P_{ij}(D), P_{ab}(D) ) \geq 1 - \gamma$ for some $i,j,a,b \in Z$ with $P_{ij} \not \equiv P_{ab}$ is at most $\epsilon$. Moreover, the value of $\gamma$ depends only on $\epsilon$, $t$, and the volume of $A$.
\end{corollary}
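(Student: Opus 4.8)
The plan is to observe that the event in question is precisely the event $\{D \in S(\gamma;t)\}$, where $S(\gamma;t)$ is the set defined in Lemma \ref{lem:phi_property}. Indeed, by the definition in \eqref{eq:def_S}, a distance $y \in [0,r]$ satisfies $\phi_t(P_{ij}(y), P_{ab}(y)) \geq 1 - \gamma$ for some $i,j,a,b \in Z$ with $P_{ij} \not\equiv P_{ab}$ if and only if $y \in S(\gamma;t)$. Hence the probability we must control is exactly $\P(D \in S(\gamma;t))$, and the task reduces to showing that this can be made at most $\epsilon$ by choosing $\gamma$ small, with $\gamma$ depending only on $\epsilon$, $t$, and $\text{vol}(A)$.

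Next, I would bound the pushforward of the uniform distribution on $A$ under the distance-to-origin map by a constant multiple of Lebesgue measure on $[0,r]$. For any measurable $S \subseteq [0,r]$,
\begin{equation*}
    \P(D \in S) = \frac{\text{vol}(\{x \in A : \|x\| \in S\})}{\text{vol}(A)} \leq \frac{\text{vol}(\{x \in B_{r,d} : \|x\| \in S\})}{\text{vol}(A)} = \frac{\int_S d \nu_d y^{d-1} \, dy}{\text{vol}(A)},
\end{equation*}
where I used $A \subseteq B_{r,d}$ and the fact that the sphere of radius $y$ has surface area $d\nu_d y^{d-1}$. Since $y \leq r$ on $[0,r]$, the integrand is at most $d\nu_d r^{d-1}$, so that
\begin{equation*}
    \P(D \in S) \leq \frac{d\nu_d r^{d-1}}{\text{vol}(A)} \, |S|.
\end{equation*}

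Finally, I would apply Lemma \ref{lem:phi_property} with target measure $\epsilon' := \epsilon \, \text{vol}(A) / (d \nu_d r^{d-1})$: this yields a $\gamma > 0$, depending only on $\epsilon'$ and $t$, such that $|S(\gamma;t)| < \epsilon'$, whereupon the displayed bound gives $\P(D \in S(\gamma;t)) \leq (d\nu_d r^{d-1}/\text{vol}(A)) \cdot \epsilon' = \epsilon$, as desired. For the ``moreover'' claim, note that $d$, $\nu_d$, and $r$ are fixed model parameters, so $\epsilon'$ depends only on $\epsilon$ and $\text{vol}(A)$, and hence $\gamma$ depends only on $\epsilon$, $t$, and $\text{vol}(A)$. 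There is no serious obstacle here; the only point requiring a little care is the pushforward comparison, where one must bound the density of $D$ \emph{uniformly} by exploiting $A \subseteq B_{r,d}$ and $y \le r$ rather than working with the exact, shape-dependent distribution of $D$. This ensures that the constant $d\nu_d r^{d-1}$ is the same for every region $A$, so that no dependence on the shape of $A$ beyond its volume enters the final choice of $\gamma$.
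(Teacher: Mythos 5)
Your proof is correct and follows essentially the same route as the paper's: both reduce the event to $\{D \in S(\gamma;t)\}$, invoke Lemma \ref{lem:phi_property} to make $|S(\gamma;t)|$ small, and compare the law of $D$ on $A$ to that on the full ball $B_{r,d}$, where the worst case is the bad set of distances sitting in the outer shell near radius $r$. Your one-step uniform density bound $d\nu_d r^{d-1}/|A|$ is just a slightly more streamlined packaging of the paper's two-step argument (handle $A = B_{r,d}$ via the worst-case annulus $(r-\epsilon', r]$, then pass to general $A$ by a volume ratio).
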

\begin{proof}
Let $E_A(x)$ be the event that $\phi_t(P_{ij}(D), P_{ab}(D) ) \geq 1 - x$ for some $i,j,a,b \in Z$ with $P_{ij} \not \equiv P_{ab}$.
We first consider the case where $A = B_{r,d}$. Fix $\epsilon > 0$ and $t \in (0,1)$, and let $\epsilon'$ satisfy $\epsilon = 1 - ((r-\epsilon')/r)^d$. Then by Lemma \ref{lem:phi_property}, there exists $\gamma > 0$ such that $|S(\gamma; t)| < \epsilon'$. In the worst case, $S(\gamma; t) = (r-\epsilon', r]$. Then $E_A(\gamma)$ holds with probability at most $1- ((r-\epsilon')/r)^d = \epsilon$, as desired.

If $A \neq B_{r,d}$, then again fix $\epsilon > 0$ and $t \in (0,1)$. Let $\epsilon' > 0$ satisfy $\epsilon = \epsilon' \cdot \nu_d r^d / |A|$. Then there exists $\gamma > 0$ such that $\mathbb{P}(E_{B_{r,d}}(\gamma)) \leq \epsilon'$. Since $|A| / |B_{r, d}| = |A| / (\nu_d r^d)$, it follows that 
\begin{equation*}
\mathbb{P}(E_{A}(\gamma)) \cdot \frac{|A|}{|B_{r,d}|} \leq \mathbb{P}(E_{B_{r,d}}(\gamma)) \leq \epsilon' 
\end{equation*}
and so
$\mathbb{P}(E_{A}(\gamma)) \leq \epsilon$. Since $\epsilon$, and therefore $\gamma$, depends on $A$ only through its volume, the claim follows.
\end{proof}

\subsection{Maximum a Posteriori}
\label{subappx:MAP_proof}
In this section, we show that the MAP estimator achieves exact recovery on $V_{i_0}$ with high probability, using ideas from \cite{Gaudio+2025}.  

\begin{prop}
\label{prop:MAP}
    Let $G \sim \text{GHCM}(\lambda, n, r, \pi, P(y), d)$ and let $\hat{\sigma}$ be the estimated labels for $V_{i_0}$. Suppose that Assumptions \ref{assump:regularity_condition}, \ref{assump:bounded_log_likelihood}, and \ref{assump:distinctness} hold. Then, $\hat{\sigma}$ achieves exact recovery on $V_{i_0}$ with high probability.
\end{prop}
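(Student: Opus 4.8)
The plan is to follow \cite{Gaudio+2025} and analyze the restricted MLE $\bar\sigma$ of \eqref{eq:restricted_MLE} in place of the MAP estimator: since $\bar\sigma$ is a valid estimator and the MAP estimator minimizes the probability of error, it suffices to show that $\bar\sigma$ recovers $\sigma^*$ up to a permissible relabeling with high probability. Writing $N := |V_{i_0}| = \epsilon_0 \log n$, I would first record that $\sigma^* \in X_0^*(\epsilon)$ with high probability: the labels on $V_{i_0}$ are i.i.d.\ draws from $\pi$ and $N \to \infty$, so the empirical community proportions concentrate within $\epsilon$ of $\pi$ by the binomial Chernoff bound (Lemma \ref{lem:binomial_Chernoff}). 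It then remains to bound the probability that some labeling $\sigma$ that is not a permissible relabeling of $\sigma^*$ satisfies $L(\sigma) \ge L(\sigma^*)$, where $L(\sigma) := \sum_{u \neq v \in V_{i_0}} \log \overline{p}_{\sigma(u)\sigma(v)}(x_{uv}; \|u-v\|)$ is the observed log-likelihood.

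I would control $L(\sigma) - L(\sigma^*)$, whose summands vanish on pairs avoiding the discrepancy set $T := \{v : \sigma(v) \neq \sigma^*(v)\}$ (taken after the optimal permissible relabeling), and split on $m := |T|$, following \cite{Dhara+2024}. Because $\chi$ is small, the block has rescaled diameter at most $r$, so every pair in $V_{i_0}$ is mutually visible and each rescaled distance $\|u-v\|/(\log n)^{1/d}$ lies in $[0,r]$. The heterogeneity introduced by distance-dependence is handled by Corollary \ref{cor:distinguishing_vertices}: conditioning on the vertex locations, for a fixed tilt $t \in (0,1)$ and any target $\epsilon$ one may choose $\gamma > 0$ so that, with high probability, only an $\epsilon$-fraction of the $\binom{N}{2}$ pairs lie at distances that are uninformative (i.e.\ $\phi_t \ge 1-\gamma$) for some mismatched label pair with $P_{ij}\not\equiv P_{ab}$, uniformly over all label pairs, where $\equiv$ versus $\not\equiv$ is classified by Assumption \ref{assump:regularity_condition}.

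In the low-discrepancy regime $1 \le m \le \alpha N$ with $\alpha$ a small constant, I would argue uniformly over labelings. For each vertex $v$ and alternative label $a \neq \sigma^*(v)$, Assumption \ref{assump:distinctness} gives $P_{a\,\sigma^*(u)} \not\equiv P_{\sigma^*(v)\,\sigma^*(u)}$, so for the single-vertex change $\Delta_{v,a} := \sum_{u \neq v} \log\big(\overline{p}_{a\,\sigma^*(u)}(x_{uv})/\overline{p}_{\sigma^*(v)\,\sigma^*(u)}(x_{uv})\big)$, the tilted moment generating function of each good-distance summand is at most $1-\gamma$ (Corollary \ref{cor:distinguishing_vertices}), while each of the at most $\epsilon\cdot\Theta(\log n)$ bad-distance summands contributes a factor at most $e^{t\eta}$ by Assumption \ref{assump:bounded_log_likelihood}; choosing $\epsilon$ small makes the product at most $n^{-c}$, so $\P(\Delta_{v,a} \ge -c'\log n) = o(1/\mathrm{polylog}(n))$, and a union bound over the $O(\log n)$ pairs $(v,a)$ gives $\max_{v,a}\Delta_{v,a} \le -c'\log n$ with high probability. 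Decomposing the double sum then yields $L(\sigma) - L(\sigma^*) \le \sum_{v \in T}\Delta_{v,\sigma(v)} + O(\eta m^2)$, the error term collecting the $O(m^2)$ pairs internal to $T$ and bounded via Assumption \ref{assump:bounded_log_likelihood}; this is at most $-c'm\log n + O(\eta m^2) < 0$ for all such $\sigma$ simultaneously, since $m \le \alpha N$.

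In the high-discrepancy regime $m > \alpha N$, I would bound each labeling individually by a tilting argument, $\P(L(\sigma) \ge L(\sigma^*)) \le \exp\big(\sum_{\text{active pairs}} \log\phi_t(\cdot)\big)$, and then union bound. The main obstacle---and the crux of the proof---is to show that every non-permissible $\sigma$ has $\Omega(mN) = \Omega((\log n)^2)$ active pairs that are both informative ($P \not\equiv P'$) and at good distances, so that this exponent is at most $-c''(\log n)^2$; this needs a combinatorial lemma (using Assumptions \ref{assump:regularity_condition} and \ref{assump:distinctness} to rule out labelings whose active pairs are overwhelmingly inert, which would correspond to near-permissible relabelings) together with the uniform location-level bound of Corollary \ref{cor:distinguishing_vertices} to discard only an $\epsilon$-fraction of pairs at uninformative distances. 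Since the number of labelings of $V_{i_0}$ is at most $k^{N} = n^{\epsilon_0 \log k} \le n^{1/2}$ by the choice $\epsilon_0 \le 1/(2\log k)$, and $\exp(-c''(\log n)^2) = n^{-\omega(1)}$ dominates $n^{1/2}$, the union bound tends to $0$. Combining the two regimes shows that $\bar\sigma$, and hence the MAP estimator, recovers $\sigma^*$ up to a permissible relabeling with high probability. The essential new difficulty relative to the constant-probability setting of \cite{Gaudio+2025} is precisely certifying that enough pairs remain informative under distance-dependence, which is what Corollary \ref{cor:distinguishing_vertices} supplies.
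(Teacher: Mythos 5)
Your proposal follows the paper's proof essentially step for step: pass to the restricted MLE and invoke optimality of the MAP, verify $\sigma^*\in X_0^*(\epsilon)$, split on the discrepancy $m$, handle distance heterogeneity via Corollary \ref{cor:distinguishing_vertices}, bound the within-$T$ cross terms by $O(\eta m^2)$ via Assumption \ref{assump:bounded_log_likelihood}, and close the high-discrepancy case with a union bound over the $k^{\epsilon_0\log n}$ labelings. The low-discrepancy argument is complete as written (as a minor remark, you do not need the $e^{t\eta}$ factors for the uninformative-distance summands, since $\phi_t\le 1$ for all $t\in[0,1]$ by H\"older; but your version also works).

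The one genuine gap is exactly where you flag it: in the high-discrepancy regime you assert that every non-permissible $\sigma$ with $m>\alpha N$ admits $\Omega((\log n)^2)$ active pairs that are informative, and defer this to an unstated ``combinatorial lemma.'' This is the load-bearing step, and it is not automatic: a high-discrepancy labeling could a priori concentrate its changes so that almost all active pairs involve label pairs with $P_{ij}\equiv P_{i'j'}$. The paper closes this by analyzing the confusion matrix $n_{ab}:=|\{u\in V_{i_0}:\sigma^*(u)=a,\,\sigma(u)=b\}|$ and splitting into two cases. If some community $a$ has two large images $b\neq b'$ (both $n_{ab},n_{ab'}\ge \epsilon\epsilon_0\log(n)/k$), then Assumption \ref{assump:regularity_condition} yields a $z$ with $P_{bz}\not\equiv P_{b'z}$, hence at least one of $P_{ay}\not\equiv P_{bz}$ or $P_{ay}\not\equiv P_{b'z}$ for a community $y$ with $n_{yz}\ge\epsilon\epsilon_0\log(n)/k$ (which exists because $\sigma\in X_0^*(\epsilon)$ and $\epsilon\le\pi_{\min}/3$); this produces two vertex classes of size $\Omega(\log n)$ whose cross pairs are informative. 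Otherwise each community has a unique dominant image, defining a map $\omega$, and the constraints on $\epsilon,\epsilon_0,c$ force $\omega$ to be a bijection that is not permissible, so some $(a,b)$ satisfies $P_{ab}\not\equiv P_{\omega(a)\omega(b)}$ and the corresponding classes again give $\Omega((\log n)^2)$ informative pairs. Intersecting with the event $\mathcal{L}_{\mathrm{good}}$ (at most $\tfrac{\epsilon}{2k}\epsilon_0\log n$ of each vertex's neighbors lie at uninformative distances, by Corollary \ref{cor:distinguishing_vertices} plus a Chernoff and union bound) yields the bound $(\Phi_1')^{\frac12(\epsilon\epsilon_0\log(n)/k)^2}$ per labeling that your union bound requires. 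Without this case analysis, or an equivalent one, the high-discrepancy bound is not established.
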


Like in \cite{Gaudio+2025}, we analyze the restricted MLE rather than the MAP estimator directly. From \eqref{eq:restricted_labeling_set} and \eqref{eq:restricted_MLE}, recall that the definition of the restricted MLE is
\begin{equation*}
    \bar{\sigma} = \argmax_{\sigma \in X_0^*(\epsilon)} \sum_{u \in V_{i_0}} \sum_{v \in V_{i_0}, v \neq u} \log\Big(\overline{p}_{\sigma(u), \sigma(v)}(x_{uv}; \|u -v\|)\Big)
\end{equation*}
where
\begin{equation*}
    X_0^*(\epsilon) := \Big\{\sigma : V_{i_0} \to Z \text{ such that } |\{u \in V_{i_0} : \sigma(u) = j\}| \in \Big((\pi_j - \epsilon)|V_{i_0}|, (\pi_j + \epsilon)|V_{i_0}|\Big) \: \forall j \in Z\Big\},
\end{equation*}
with $\epsilon > 0$ to be defined. We will show that the restricted MLE achieves exact recovery on $V_{i_0}$ with high probability. Then, since the MAP estimator is optimal, we obtain that the MAP estimator achieves exact recovery on $V_{i_0}$ with high probability as well.

\begin{lemma}
\label{lem:restricted_MLE_lemma}
    Let $G \sim \text{GHCM}(\lambda, n, r, \pi, P(y), d)$ and let $\bar{\sigma}$ be the restricted MLE. Suppose that Assumptions \ref{assump:regularity_condition}, \ref{assump:bounded_log_likelihood}, and \ref{assump:distinctness} hold. Then, $\bar{\sigma}$ achieves exact recovery on $V_{i_0}$ with high probability.
\end{lemma}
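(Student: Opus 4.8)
The plan is to control the error probability of the restricted MLE directly: writing $L(\sigma) := \sum_{u \neq v \in V_{i_0}} \log \overline{p}_{\sigma(u),\sigma(v)}(x_{uv}; \|u-v\|)$ and observing that every permissible relabeling $\omega$ satisfies $L(\omega \circ \sigma^*) = L(\sigma^*)$ (since $P_{ij} = P_{\omega(i)\omega(j)}$), it suffices to show $\P\big(\exists\, \sigma \in X_0^*(\epsilon) \text{ with } \Delta(\sigma) \geq 1 \text{ and } L(\sigma) \geq L(\sigma^*)\big) = o(1)$, where the discrepancy $\Delta(\sigma)$ is the number of vertices on which $\sigma$ disagrees with $\sigma^*$, minimized over $\Omega_{\pi,P}$. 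For a fixed $\sigma$, the pairwise independence of edge weights and a Chernoff bound at a fixed $t \in (0,1)$ give $\P(L(\sigma) \geq L(\sigma^*)) \leq \prod_{\{u,v\}} \phi_t\big(\overline{P}_{\sigma(u)\sigma(v)}(\|u-v\|),\, \overline{P}_{\sigma^*(u)\sigma^*(v)}(\|u-v\|)\big)$, where Assumption \ref{assump:bounded_log_likelihood} guarantees each factor is finite. By Jensen's inequality every factor is at most $1$, and by Assumption \ref{assump:regularity_condition} a factor is strictly below $1$ precisely on the \emph{discrepant} pairs, where $P_{\sigma(u)\sigma(v)} \not\equiv P_{\sigma^*(u)\sigma^*(v)}$. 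After rescaling distances by $(\log n)^{1/d}$, so that $D_{uv} := \|u-v\|/(\log n)^{1/d} \in [0,r]$, these factors are exactly the quantities governed by Corollary \ref{cor:distinguishing_vertices}.

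First I would fix $t = 1/2$ (so each factor is a Bhattacharyya coefficient) and, for the target $\epsilon > 0$, let $\gamma > 0$ be the constant from Corollary \ref{cor:distinguishing_vertices} applied to the rescaled block $A$ of volume $r^d\chi$; for small $\chi$ this block lies in $B_{r,d}$, which simultaneously guarantees that all pairs in the block are mutually visible. Call $v$ an \emph{informative partner} of $u$ if $\phi_{1/2}(P_{ij}(D_{uv}), P_{ab}(D_{uv})) < 1-\gamma$ for every $i,j,a,b \in Z$ with $P_{ij} \not\equiv P_{ab}$. Conditioned on their count the vertices are i.i.d.\ uniform in the block, so Corollary \ref{cor:distinguishing_vertices} shows each $v$ fails to be an informative partner of $u$ with probability at most $\epsilon$; the Binomial Chernoff bound (Lemma \ref{lem:binomial_Chernoff}) together with a union bound over the $\epsilon_0 \log n$ vertices then yields, with high probability, the good-placement event $\mathcal{G}$ that every $u \in V_{i_0}$ has at least $(1-2\epsilon)|V_{i_0}|$ informative partners. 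I would work on $\mathcal{G}$ from here on.

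Next I would split on the discrepancy $m := \Delta(\sigma)$. In the low-discrepancy regime $1 \leq m \leq (1-2\epsilon)|V_{i_0}|/2$, each flipped vertex $u$ (true label $i$, assigned $a \neq i$) has at least $(1-2\epsilon)|V_{i_0}| - m \geq (1-2\epsilon)|V_{i_0}|/2$ correctly-labeled informative partners $v$ (label $b$); each such pair is discrepant because $P_{ib} \not\equiv P_{ab}$ by Assumption \ref{assump:distinctness} and symmetry, and informative on $\mathcal{G}$. Summing over the $m$ flipped vertices gives at least $c\,m\log n$ distinct informative discrepant pairs, so $\P(L(\sigma) \geq L(\sigma^*) \mid \mathcal{G}) \leq (1-\gamma)^{c\,m\log n} = n^{-c'm}$ with $c' > 0$. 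Since there are at most $\binom{|V_{i_0}|}{m}(k-1)^m \leq (C\log n)^m$ labelings of discrepancy $m$, a union bound over $1 \leq m \leq (1-2\epsilon)|V_{i_0}|/2$ produces a geometric series summing to $O\big(n^{-c'}\log n\big) = o(1)$.

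In the high-discrepancy regime $m > (1-2\epsilon)|V_{i_0}|/2 = \Theta(\log n)$, pairing flipped with correct vertices no longer suffices, so I would instead argue that the proportion constraint $\sigma \in X_0^*(\epsilon)$, combined with Assumptions \ref{assump:regularity_condition} and \ref{assump:distinctness}, forces a constant fraction of all $\binom{|V_{i_0}|}{2}$ pairs to be discrepant: a labeling with few discrepant pairs but many flips must be close to a relabeling that preserves all pairwise distributions and proportions, hence to a permissible relabeling, contradicting large $\Delta(\sigma)$. On $\mathcal{G}$ a constant fraction of these discrepant pairs are informative, giving $\Omega((\log n)^2)$ informative discrepant pairs and $\P(L(\sigma) \geq L(\sigma^*)\mid\mathcal{G}) \leq e^{-\Omega((\log n)^2)}$. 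Since $k^{|V_{i_0}|} \leq n^{1/2}$ by the choice $\epsilon_0 \leq 1/(2\log k)$, the union bound gives $n^{1/2}e^{-\Omega((\log n)^2)} = o(1)$, and combining the two regimes shows $\bar{\sigma} = \sigma^*$ up to a permissible relabeling with high probability. I expect the main obstacle to be precisely the distance-dependence: a flipped vertex may have many partners at uninformative distances (where $\phi_{1/2} \approx 1$) that contribute nothing to the likelihood gap, and ruling this out uniformly over all vertices and all pairs $P_{ij} \not\equiv P_{ab}$ is the role of $\mathcal{G}$ and Corollary \ref{cor:distinguishing_vertices}. The most delicate step is the high-discrepancy confusion-matrix argument establishing $\Omega((\log n)^2)$ \emph{informative} discrepant pairs, since here one must combine the combinatorial consequence of the proportion constraint with the informativeness guaranteed by $\mathcal{G}$.
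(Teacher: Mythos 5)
Your proposal is correct in outline and shares the paper's architecture (reduce to labelings in $X_0^*(\epsilon)$, Chernoff bound factoring into a product of $\phi_t$'s, a good-location event built from Corollary \ref{cor:distinguishing_vertices}, and a low/high discrepancy split), but your low-discrepancy regime is handled by a genuinely different route. The paper does not union-bound over labelings of discrepancy $m$ there; instead it telescopes $\ell_0(G,\sigma)-\ell_0(G,\sigma^*)$ into $A(\sigma)+B_1(\sigma)+B_2(\sigma)$, proves a single high-probability event (one per vertex--label pair, so only $O(k\log n)$ events) that forces $A(\sigma),B_1(\sigma)\le -d_H(\sigma,\sigma^*)c_1\log n$ simultaneously for \emph{all} $\sigma$, and pays for the flipped-flipped cross terms with $B_2(\sigma)\le 2\eta d_H(\sigma,\sigma^*)^2$ via Assumption \ref{assump:bounded_log_likelihood}. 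Your alternative --- bounding each labeling by $(1-\gamma)^{cm\log n}=n^{-c'm}$ (all factors are at most $1$ by H\"older, so flipped-flipped pairs are harmless) and union-bounding over the at most $(C\log n)^m$ labelings of discrepancy $m$ --- also closes, gives a geometric series in $m$, and has the aesthetic advantage of not needing Assumption \ref{assump:bounded_log_likelihood} in that regime; the paper's decomposition buys a uniform deterministic inequality from far fewer probabilistic events. Your high-discrepancy paragraph is the same confusion-matrix argument as the paper's Proposition \ref{prop:high_discrepancy}, but you have left precisely the step the paper spells out (the split into the case where some true class $a$ is sent substantially to two labels $b\neq b'$, resolved via Assumption \ref{assump:regularity_condition}, versus the case where the confusion matrix is essentially a permutation $\omega$ that cannot be permissible) as a sketch.

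Two small repairs you should make. First, for ``it suffices to show\dots'' you need $\sigma^*\in X_0^*(\epsilon)$ with high probability (the paper cites Lemma D.3 of \cite{Gaudio+2025}); otherwise the restricted MLE need not be comparable to $\sigma^*$ at all. Second, your event $\mathcal{G}$ guarantees only that each $u$ has at most $2\epsilon|V_{i_0}|$ non-informative partners, which is enough for the low-discrepancy count but \emph{not} for the high-discrepancy one: there you must find informative partners inside a single confusion-matrix cell of size only $\epsilon|V_{i_0}|/k$, which could be entirely swallowed by $2\epsilon|V_{i_0}|$ bad partners. You need to run Corollary \ref{cor:distinguishing_vertices} with failure probability on the order of $\epsilon/(4k)$ rather than $\epsilon$ (this is exactly the role of the paper's second constant $\Phi_1'$ alongside $\Phi_1$). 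Both fixes are routine.
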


To prove Lemma \ref{lem:restricted_MLE_lemma}, we upper-bound the probability there exists an incorrect labeling $\sigma \in X_0^*(\epsilon)$ with a higher posterior likelihood of the observed graph than the ground-truth labeling $\sigma^*$; since $\sigma^* \in X_0^*(\epsilon)$ with high probability by \cite[Lemma D.3]{Gaudio+2025}, this is the error probability of the restricted MLE. Formally, define the posterior log-likelihood of the observed graph relative to the labeling $\sigma$ as
\begin{equation*}
    \ell_0(G, \sigma) := \sum_{u \in V_{i_0}} \sum_{v \in V_{i_0}, v \neq u} \log(\overline{p}_{\sigma(u)\sigma(v)}(x_{uv}; \|u - v\|)).
\end{equation*}
Then, we upper-bound
\begin{equation}
\label{eq:restricted_MLE_fail_prob}
    \P\bigg( \bigcup_{\sigma \in X_0^*(\epsilon)} \{\ell_0(G, \sigma) - \ell_0(G, \sigma^*) \geq 0\} \bigg).
\end{equation}

We divide the analysis into two cases, depending on the \textit{discrepancy} between the incorrect labeling $\sigma$ and the ground-truth labeling $\sigma^*$. We use the definition of  \cite{Gaudio+2025}.
\begin{definition}[Discrepancy]
    Let $d_H(\sigma, \sigma')$ be the Hamming distance between two labelings $\sigma, \sigma' : V_{i_0} \to Z$, which is defined
    \begin{equation*}
        d_H(\sigma, \sigma') = \sum_{u \in V_{i_0}} \1(\sigma(u) \neq \sigma'(u)).
    \end{equation*}
    Then, the discrepancy between two $\sigma$ and $\sigma'$ is defined as
    \begin{equation*}
        \text{DISC}(\sigma, \sigma') = \min_{\omega \in \Omega_{\pi, P}} \{d_H(\omega \circ \sigma, \sigma')\},
    \end{equation*}
    which is the number of vertices on which $\sigma$ and $\sigma'$ differ, up to a permissible relabeling. We note that a labeling $\sigma$ is correct if and only if $\text{DISC}(\sigma, \sigma^*) = 0$.
\end{definition}

Let $\eta$ be as defined in Assumption \ref{assump:regularity_condition} and $\epsilon_0$ be defined as in Algorithm \ref{alg:exact_recovery}. Let $\pi_\text{min} = \min_{i \in Z} \pi_i$. Throughout this section, we fix $t \in (0,1)$ to be an arbitrary constant. Fix a vertex $u \in V_{i_0}$, and let $D$ be the distance between $u$ and another vertex placed uniformly at random in the initial block $B_{i_1}$. Let $\Phi_1 < 1$ be such that $\phi_t(\overline{P}_{ij}(D), \overline{P}_{ab}(D)) < \Phi_1)$ for all $i,j,a,b \in Z$ with $P_{ij} \not \equiv P_{ab}$ with probability at least $3/4$. The existence of $\Phi_1$ is guaranteed by Corollary \ref{cor:distinguishing_vertices}, and note that $\Phi_1$ can be chosen uniformly for all $u \in V_{i_0}$. We choose positive constants $c$ and $\epsilon$ such that
\begin{equation}
\label{eq:def_c_epsilon}
    c \leq \frac{\epsilon_0 \log(1/\Phi_1)}{4 \eta} \quad \text{and} \quad \epsilon \leq \frac{1}{3}\min\bigg\{\pi_\text{min}, \min_{i,j \in Z: \pi_i \neq \pi_j} \{|\pi_i - \pi_j|\}, \frac{c}{(k-1)\epsilon_0}\bigg\}.
\end{equation}
Also let $\Phi_!' < 1$ be such that $\phi_t(\overline{P}_{ij}(D), \overline{P}_{ab}(D)) < \Phi_1'$ for all $i,j,a,b \in Z$ with $P_{ij} \not \equiv P_{ab}$ with probability at least $1 - \frac{\epsilon}{4k}$.

The two cases of low and high discrepancy are handled in the following propositions, in parallel to \cite[Propositions D.5 and D.6]{Gaudio+2025}.
\begin{prop}[Low Discrepancy]
\label{prop:low_discrepancy}
    Suppose that Assumptions \ref{assump:bounded_log_likelihood} and \ref{assump:distinctness} hold. Then, with $\epsilon$ defined in \eqref{eq:def_c_epsilon}, there exists $c \in (0, 1)$ satisfying \eqref{eq:def_c_epsilon} such that with high probability
    \begin{equation*}
        \forall \sigma \in X_0^*(\epsilon) \text{ such that } 0 < \mathrm{DISC}(\sigma, \sigma^*) < c \log n, \text{ we have } \ell_0(G, \sigma) < \ell_0(G, \sigma^*).
    \end{equation*}
\end{prop}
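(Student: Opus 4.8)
The plan is to use a single-vertex, genie-aided decomposition of the likelihood gap, control the interaction between simultaneously flipped vertices via the boundedness in Assumption~\ref{assump:bounded_log_likelihood}, and extract a uniform positive drift per flipped vertex from a Chernoff bound that exploits the informative neighbors guaranteed by Corollary~\ref{cor:distinguishing_vertices}.

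Fix $\sigma \in X_0^*(\epsilon)$ with $0 < \mathrm{DISC}(\sigma, \sigma^*) = m < c\log n$. Since every permissible relabeling $\omega$ leaves $\ell_0(G,\cdot)$ invariant (because $\overline{p}_{ij} = \overline{p}_{\omega(i)\omega(j)}$), I may pass to the relabeling attaining the discrepancy and assume that $\sigma$ disagrees with $\sigma^*$ exactly on a set $U$ with $|U| = m$. Only ordered pairs having an endpoint in $U$ survive in $\ell_0(G,\sigma^*) - \ell_0(G,\sigma)$, so I split this gap into a single-flip part $A$ (pairs with exactly one endpoint in $U$) and an interaction part $B$ (pairs with both endpoints in $U$). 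Assumption~\ref{assump:bounded_log_likelihood} bounds each summand by $\eta$ in absolute value, giving $|B| \le \eta m^2$, while $A = 2\sum_{u\in U} W_u^{(U)}$ with $W_u^{(U)} := \sum_{v \in V_{i_0} \setminus U} \log\big(\overline{p}_{\sigma^*(u)\sigma^*(v)}(x_{uv}; \|u-v\|) / \overline{p}_{\sigma(u)\sigma^*(v)}(x_{uv}; \|u-v\|)\big)$ the genie-aided log-likelihood ratio favoring the true label of $u$. Writing $W_u$ for the same sum over all $v \ne u$, deleting the at most $m$ terms with $v \in U$ costs at most $\eta m$, so it suffices to establish the uniform bound $W_u \ge \tau := 2\eta c \log n$ over all $u \in V_{i_0}$ and all wrong labels $b = \sigma(u) \ne \sigma^*(u)$: this yields $\ell_0(G,\sigma^*) - \ell_0(G,\sigma) \ge 2m\tau - O(\eta m^2) > 0$ for every $m < c\log n$, simultaneously for all low-discrepancy $\sigma$.

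I bound $\P(W_u \le \tau)$ conditional on the vertex locations with a Chernoff bound at the fixed exponent $t \in (0,1)$, obtaining $\P(W_u \le \tau \mid \mathrm{loc}) \le e^{t\tau}\prod_{v \ne u} \phi_t(\overline{P}_{b\sigma^*(v)}(\|u-v\|), \overline{P}_{\sigma^*(u)\sigma^*(v)}(\|u-v\|))$. Each factor is at most $1$ since $t \in [0,1]$, and because $\sigma^*(u) \ne b$, the distinctness in Assumption~\ref{assump:distinctness} together with $P_{ij} = P_{ji}$ forces $P_{\sigma^*(u)c} \not\equiv P_{bc}$ for every $c = \sigma^*(v)$, so each factor drops strictly below $\Phi_1$ precisely at informative distances. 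The crux is thus to show that a constant fraction of the neighbors of $u$ lie at informative distances, so that $\prod_{v} \phi_t \le \Phi_1^{\Omega(\log n)}$.

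This last point is where Corollary~\ref{cor:distinguishing_vertices} enters and constitutes the main new obstacle relative to the constant-probability GHCM: because of distance dependence, some distances carry essentially no information, and I must guarantee — uniformly over all $\Theta(\log n)$ vertices of $V_{i_0}$ — that enough neighbors are informative. By the definition of $\Phi_1$, a uniformly random neighbor of $u$ lands at an informative distance with probability at least $3/4$; conditioned on the locations these events are independent across $v$, so the binomial Chernoff bound of Lemma~\ref{lem:binomial_Chernoff} shows $u$ has at least $\tfrac12 |V_{i_0}|$ informative neighbors except with probability $n^{-\Omega(\epsilon_0)}$, and a union bound over the $\epsilon_0 \log n$ vertices makes this hold for all $u$ with high probability. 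On this event $\prod_v \phi_t \le \Phi_1^{(\epsilon_0/2)\log n} = n^{-(\epsilon_0/2)\log(1/\Phi_1)}$, and combining with $e^{t\tau} = n^{2t\eta c}$ and the choice $c \le \epsilon_0 \log(1/\Phi_1)/(4\eta)$ from \eqref{eq:def_c_epsilon} makes the exponent $-\tfrac{1-t}{2}\epsilon_0 \log(1/\Phi_1) < 0$, where it is essential that $t < 1$. Thus $\P(W_u \le \tau \mid \mathrm{loc}) \le n^{-\beta}$ for some $\beta > 0$, and a union bound over the $\epsilon_0 \log n$ vertices $u$ and the at most $k-1$ wrong labels $b$ gives $W_u \ge \tau$ uniformly with high probability, completing the argument. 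The only subtlety is bookkeeping: the Chernoff estimate is conditional on locations and must be combined with the informative-neighbor event inside a single expectation, but the two estimates are otherwise cleanly decoupled.
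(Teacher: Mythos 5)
Your proposal is correct and follows essentially the same route as the paper's proof: the same split of the likelihood gap into per-vertex single-flip sums plus an $O(\eta\, m^2)$ interaction term controlled by Assumption \ref{assump:bounded_log_likelihood}, the same Chernoff bound at a fixed $t\in(0,1)$ conditioned on locations yielding a product of $\phi_t$ factors, the same use of $\Phi_1$, Corollary \ref{cor:distinguishing_vertices}, and a binomial Chernoff bound to guarantee $\Omega(\log n)$ informative neighbors uniformly over $V_{i_0}$, and the same union bound over vertices and wrong labels before combining the linear drift with the quadratic correction. The only difference is cosmetic bookkeeping (you split pairs by how many endpoints are flipped, whereas the paper telescopes one coordinate at a time so that its $B_1(\sigma)=A(\sigma)$), and your constants are consistent with the constraint $c\le\epsilon_0\log(1/\Phi_1)/(4\eta)$ in \eqref{eq:def_c_epsilon}.
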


\begin{prop}[High Discrepancy]
\label{prop:high_discrepancy}
    Suppose that Assumption \ref{assump:regularity_condition} holds. Fix any $c \in (0, 1)$. Then, with $\epsilon$ defined in \eqref{eq:def_c_epsilon}, with high probability
    \begin{equation*}
        \forall \sigma \in X_0^*(\epsilon) \text{ such that } \mathrm{DISC}(\sigma, \sigma^*) \geq c \log n, \text{ we have } \ell_0(G, \sigma) < \ell_0(G, \sigma^*).
    \end{equation*}
\end{prop}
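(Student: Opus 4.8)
The plan is to fix an arbitrary $\sigma \in X_0^*(\epsilon)$ with $\mathrm{DISC}(\sigma,\sigma^*) = m \geq c\log n$, bound the probability that this single $\sigma$ has higher likelihood than $\sigma^*$, and then union bound over all such $\sigma$. After applying the permissible relabeling $\omega$ that achieves the discrepancy, I may assume $\sigma$ and $\sigma^*$ differ exactly on a set $T \subseteq V_{i_0}$ with $|T| = m$. Since the block side length is $\Theta((\log n)^{1/d})$ with a small constant, all pairs in $V_{i_0}$ are mutually visible; writing the log-likelihood gap as a sum over these pairs and applying an exponential Markov inequality with the fixed exponent $t \in (0,1)$, conditionally on the vertex locations and true labels I obtain
\begin{equation*}
\P\big(\ell_0(G,\sigma) \geq \ell_0(G,\sigma^*) \mid \text{locations}, \sigma^*\big) \leq \prod_{\{u,v\}} \phi_t\big(\overline{P}_{\sigma(u)\sigma(v)}(\|u-v\|), \overline{P}_{\sigma^*(u)\sigma^*(v)}(\|u-v\|)\big),
\end{equation*}
where each factor is at most $1$ and equals $1$ precisely when the two distributions coincide.

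The core of the argument is a lower bound of the form $\Omega(m\,|V_{i_0}|)$ on the number of \emph{informative} pairs---pairs $\{u,v\}$ for which the assigned and true distributions differ \emph{and} the rescaled distance $\|u-v\|/(\log n)^{1/d}$ is ``good,'' so the corresponding factor is below $\Phi_1' < 1$. The distance condition is handled by Corollary \ref{cor:distinguishing_vertices}: after rescaling by $(\log n)^{-1/d}$ the displacement of a uniform point in the initial block lands in $B_{r,d}$, so a $(1 - \epsilon/(4k))$-fraction of the other vertices sit at distances at which \emph{every} distinguishable label-pair has $\phi_t < \Phi_1'$. A binomial concentration argument, together with a union bound over the $\Theta(\log n)$ vertices, promotes this to a purely locational event, independent of $\sigma$, holding with high probability for all vertices simultaneously. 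On that event, each mislabeled $u \in T$ with true/assigned labels $(a,a')$ forms an informative pair with every good-distance $v$ whose label $b = \sigma^*(v)$ lies in $\{b : P_{a'b} \not\equiv P_{ab}\}$; Assumption \ref{assump:regularity_condition} makes this set well-defined (the dichotomy $\equiv$ versus $\not\equiv$ holds for almost all $y$), and I will argue it captures an $\Omega(1)$ proportion of $V_{i_0}$. Combining the per-labeling bound $(\Phi_1')^{\Omega(m|V_{i_0}|)} = n^{-\Omega(m\epsilon_0)}$ with the count $\binom{|V_{i_0}|}{m}(k-1)^m \leq (\epsilon_0 k \log n)^m$ of labelings at discrepancy $m$, and summing the resulting geometric series over $m \geq c\log n$, drives the total failure probability to $o(1)$, since the $n^{-\Omega(\epsilon_0)}$ decay per unit of discrepancy dominates the $\mathrm{polylog}(n)$ counting factor.

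The main obstacle is exactly the combinatorial lower bound on informative pairs, precisely because only Assumption \ref{assump:regularity_condition} (identifiability) is available here, not the full distinctness of Assumption \ref{assump:distinctness}. A priori, mislabeling $u$ from community $a$ to $a'$ could fail to create any distinguishable pair if $P_{a'b} \equiv P_{ab}$ for all $b$; but this forces $a$ and $a'$ to be completely interchangeable in the pairwise distributions, so either $\pi_a = \pi_{a'}$---making the transposition $(a\,a')$ a permissible relabeling, which contradicts the optimality of $\omega$ once such swaps are numerous---or $\pi_a \neq \pi_{a'}$, which is ruled out by membership in $X_0^*(\epsilon)$ together with the choice $\epsilon \leq \tfrac{1}{3}\min_{\pi_i\neq\pi_j}|\pi_i-\pi_j|$ in \eqref{eq:def_c_epsilon}, since relabeling a positive fraction of vertices between two communities of unequal prior would push the empirical proportions outside the allowed window. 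Making this exclusion quantitative---so that a constant fraction of the $m$ mislabeled vertices each generate $\Omega(|V_{i_0}|)$ informative pairs---is the crux; the remaining steps (the Chernoff factorization and the union-bound arithmetic) are routine adaptations of \cite[Proposition D.6]{Gaudio+2025}, with Corollary \ref{cor:distinguishing_vertices} inserted to absorb the distance dependence.
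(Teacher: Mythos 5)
Your overall skeleton matches the paper's: condition on locations, apply the exponential Markov inequality to get the product of $\phi_t$-factors over pairs, use Corollary \ref{cor:distinguishing_vertices} plus binomial concentration to discard the bad-distance pairs (the event $\{L_{i_0}\in\mathcal{L}_{\text{good}}\}$ in the paper), exhibit polynomially many in $\log n$ squared informative pairs, and union bound over labelings. However, your core combinatorial claim is flawed in a way that the paper's two-case analysis exists precisely to repair. You assert that a mislabeled $u$ with true/assigned labels $(a,a')$ forms an informative pair with every good-distance $v$ whose \emph{true} label $b$ satisfies $P_{a'b}\not\equiv P_{ab}$. But the factor contributed by the pair $\{u,v\}$ is $\phi_t\bigl(\overline{P}_{\sigma(u)\sigma(v)},\overline{P}_{\sigma^*(u)\sigma^*(v)}\bigr)$, which is below $1$ only when $P_{a'\sigma(v)}\not\equiv P_{ab}$ --- it depends on the \emph{assigned} label of $v$ as well. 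In a high-discrepancy labeling the partner $v$ is typically also mislabeled, and errors at the two endpoints can cancel: for instance if $\sigma=\omega\circ\sigma^*$ for a non-permissible permutation $\omega$ preserving $\pi$, every vertex is mislabeled, yet the pair $\{u,v\}$ is informative iff $P_{\omega(a)\omega(b)}\not\equiv P_{ab}$, a condition unrelated to your $P_{\omega(a)b}\not\equiv P_{ab}$. Your per-vertex count of $\Omega(|V_{i_0}|)$ informative pairs therefore has no justification, and with it the $n^{-\Omega(m\epsilon_0)}$ per-labeling bound and the geometric-series union bound collapse.

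The paper handles this by working with the confusion matrix $n_{ij}=|\{u:\sigma^*(u)=i,\sigma(u)=j\}|$ and splitting into two cases: either some true community is split across two assigned labels $b\neq b'$ in large proportion (Case 1), in which case a $z$ with $P_{bz}\not\equiv P_{b'z}$ forces one of $P_{ay}\not\equiv P_{bz}$ or $P_{ay}\not\equiv P_{b'z}$ and yields $\Omega(\log^2 n)$ informative pairs between the blocks $N_{ab}\times N_{yz}$ (with \emph{both} endpoints' true and assigned labels tracked); or the mislabeling is essentially a single map $\omega$ (Case 2), and the constraints from $X_0^*(\epsilon)$, the choice of $\epsilon$ in \eqref{eq:def_c_epsilon}, and the discrepancy lower bound force the existence of $(a,b)$ with $P_{ab}\not\equiv P_{\omega(a)\omega(b)}$, again giving $\Omega(\log^2 n)$ informative pairs. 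Note also that this uniform $(\Phi_1')^{\Omega(\log^2 n)}$ bound makes the crude count $k^{\epsilon_0\log n}$ of all labelings sufficient, so your finer stratification by $m$ is unnecessary. Your closing discussion of the transposition $(a\,a')$ being permissible likewise does not resolve the difficulty: a labeling that swaps half of community $a$ with half of $a'$ cannot be undone by any global permissible relabeling, so it retains high discrepancy while your argument produces no informative pairs. To repair the proposal you would need to replace the per-vertex count with an analysis of the joint assigned/true label structure, which is exactly the paper's Case 1/Case 2 dichotomy.
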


\begin{proof}[Proof of Lemma \ref{lem:restricted_MLE_lemma}]
    Fix $c$ and $\epsilon$ satisfying \eqref{eq:def_c_epsilon}. Combining Propositions \ref{prop:low_discrepancy} and \ref{prop:high_discrepancy} yields that with high probability,
    \begin{equation*}
        \forall \sigma \in X_0^*(\epsilon) \text{ with } \mathrm{DISC}(\sigma, \sigma^*) \neq 0, \text{ we have } \ell_0(G, \sigma) < \ell_0(G, \sigma^*).
    \end{equation*}
    Since $\sigma^* \in X_0^*(\epsilon)$ with high probability due to \cite[Lemma D.3]{Gaudio+2025}, the posterior likelihood of $\sigma^*$ exceeds the posterior likelihood of any incorrect labeling with high probability. Therefore, the restricted MLE achieves exact recovery with high probability.
\end{proof}

We now prove Propositions \ref{prop:low_discrepancy} and \ref{prop:high_discrepancy}.

\begin{proof}[Proof of Proposition \ref{prop:low_discrepancy}]
    Fix $\sigma$ such that $\text{DISC}(\sigma, \sigma^*) > 0$. Let $\omega \in \Omega_{\pi, P}$ be the permissible relabeling such that $d_H(\omega \circ \sigma, \sigma^*) = \text{DISC}(\sigma, \sigma^*)$. Gaudio et al. \cite{Gaudio+2025} observe that $\ell_0(G, \sigma) = \ell_0(G, \omega \circ \sigma)$; thus we can assume without loss of generality that $\omega$ is the identity and $d_H(\sigma, \sigma^*) = \text{DISC}(\sigma, \sigma^*)$. Since we want to upper-bound \eqref{eq:restricted_MLE_fail_prob}, we first compute the difference of the posterior likelihoods, which yields
    \begin{align}
    \label{eq:MAP_log_likelihood_difference}
        &\ell_0(G, \sigma) - \ell_0(G, \sigma^*) = \sum_{u \in V_{i_0}} \sum_{v \in V_{i_0}, v \neq u} \log \bigg(\frac{\overline{p}_{\sigma(u)\sigma(v)}(x_{uv}; \|u - v\|)}{\overline{p}_{\sigma^*(u)\sigma^*(v)}(x_{uv}; \|u - v\|)} \bigg) \notag \\
        &\quad= \sum_{u \in V_{i_0}} \sum_{v \in V_{i_0}, v \neq u} \log \bigg(\frac{\overline{p}_{\sigma(u)\sigma^*(v)}(x_{uv}; \|u - v\|)}{\overline{p}_{\sigma^*(u)\sigma^*(v)}(x_{uv}; \|u - v\|)} \bigg) 
        + \sum_{u \in V_{i_0}} \sum_{v \in V_{i_0}, v \neq u} \log \bigg(\frac{\overline{p}_{\sigma(u)\sigma(v)}(x_{uv}; \|u - v\|)}{\overline{p}_{\sigma(u)\sigma^*(v)}(x_{uv}; \|u - v\|)} \bigg) \notag \\
        &\quad= \sum_{\substack{u \in V_{i_0} \\ \sigma(u) \neq \sigma^*(u)}} \sum_{v \in V_{i_0}, v \neq u} \log \bigg(\frac{\overline{p}_{\sigma(u)\sigma^*(v)}(x_{uv}; \|u - v\|)}{\overline{p}_{\sigma^*(u)\sigma^*(v)}(x_{uv}; \|u - v\|)} \bigg) 
        + \sum_{\substack{v \in V_{i_0} \\ \sigma(v) \neq \sigma^*(v)}} \sum_{u \in V_{i_0}, u \neq v} \log \bigg(\frac{\overline{p}_{\sigma(u)\sigma(v)}(x_{uv}; \|u - v\|)}{\overline{p}_{\sigma(u)\sigma^*(v)}(x_{uv}; \|u - v\|)} \bigg).
    \end{align}
    We will show that with high probability, this difference is negative for all low-discrepancy labelings. We consider each summation of \eqref{eq:MAP_log_likelihood_difference} separately. Let $A(\sigma)$ denote the first summation and $B(\sigma)$ denote the second summation.

    For $A(\sigma)$, we consider the contribution to the sum from a fixed $u \in V_{i_0}$ where $\sigma(u) \neq \sigma^*(u)$. Fix $0 < c_1 \leq \epsilon_0 \log(1 / \Phi_1) / 4$. Let $\sigma^*(u) = a$, $\sigma(u) = b$. Then, the Chernoff bound yields that for any $t > 0$, conditioned on the locations of vertices given by $L_{i_0}$, 
    \begin{align*}
        &\P\bigg(\sum_{v \in V_{i_0}, v \neq u} \log \bigg(\frac{\overline{p}_{b,\sigma^*(v)}(x_{uv}; \|u - v\|)}{\overline{p}_{a,\sigma^*(v)}(x_{uv}; \|u - v\|)} \bigg) \geq -c_1 \log n \: \Big| \: L_{i_0}\bigg) \\
        &\quad\leq \E\bigg[ \exp\bigg(t \sum_{v \in V_{i_0}, v \neq u} \log \bigg(\frac{\overline{p}_{b,\sigma^*(v)}(x_{uv}; \|u - v\|)}{\overline{p}_{a,\sigma^*(v)}(x_{uv}; \|u - v\|)} \bigg) \bigg) \: \Big| \: L_{i_0}  \bigg] \cdot \exp(tc_1 \log n) \\
        &\quad= \prod_{v \in V_{i_0}, v \neq u} \E\bigg[ \bigg(\frac{\overline{p}_{b,\sigma^*(v)}(x_{uv}; \|u - v\|)}{\overline{p}_{a,\sigma^*(v)}(x_{uv}; \|u - v\|)}\bigg)^t \: \Big| \: L_{i_0} \bigg] \cdot n^{tc_1} \\
        &\quad= n^{tc_1} \prod_{v \in V_{i_0}, v \neq u} \phi_t(\overline{P}_{b,\sigma^*(v)}(\|u - v\|), \overline{P}_{a,\sigma^*(v)}(\|u - v\|)).
    \end{align*}

    By definition of $\Phi_1$ and using Assumption \ref{assump:distinctness}, we have that the number of vertices $v \in V_{i_0}$ for which $\phi_t(\overline{P}_{b,\sigma^*(v)}(\|u - v\|), \overline{P}_{a,\sigma^*(v)}(\|u - v\|)) < \Phi_1$ dominates a binomial random variable with parameters $(\epsilon_0 \log n, 3/4)$. Therefore, there are at least $(\epsilon_0 \log n)/2$ such vertices with probability at least $1 - \exp\left(-(2 \epsilon_0 \log n)/{9} \right) = 1 - n^{-2\epsilon_0/9}$ by the binomial Chernoff bound. Therefore, removing the conditioning on $L_{i_0}$ yields
    \begin{align*}
    \P\bigg(\sum_{v \in V_{i_0}, v \neq u} \log \bigg(\frac{\overline{p}_{b,\sigma^*(v)}(x_{uv}; \|u - v\|)}{\overline{p}_{a,\sigma^*(v)}(x_{uv}; \|u - v\|)} \bigg) \geq -c_1 \log n \bigg) 
        &\leq n^{tc_1} \Phi_1^{(\epsilon_0 \log n) / 2} + n^{-2\epsilon_0/9} \notag \\
        &\leq n^{\epsilon_0 \log(\Phi_1)/2 + c_1} + n^{-2\epsilon_0/9} \notag \\
        &\leq n^{\epsilon_0 \log(\Phi_1)/4} + n^{-2\epsilon_0/9},
    \end{align*} 
    where the last inequality follows from the definition of $c_1$. Note that this result holds for any vertex $u \in V_{i_0}$ and community $b \neq \sigma^*(u)$. Therefore, we can apply the union bound to obtain
    \begin{align*}
        &\P\Bigg( \bigcap_{u \in V_{i_0}} \bigcap_{b \neq \sigma^*(u)} \bigg\{ \sum_{v \in V_{i_0}, v \neq u} \log \bigg(\frac{\overline{p}_{b,\sigma^*(v)}(x_{uv}; \|u - v\|)}{\overline{p}_{a,\sigma^*(v)}(x_{uv}; \|u - v\|)} \bigg) \leq -c_1 \log n \bigg\} \Bigg) \\
        &\quad \geq 1 - k (\epsilon_0 \log n) \cdot \left(n^{\epsilon_0 \log(\Phi_1) / 4}  + n^{-2\epsilon_0/9}\right) \\
        &\quad= 1 - o(1).
    \end{align*}
    Finally, observe that the event  
    \begin{equation*}
        \bigcap_{u \in V_{i_0}} \bigcap_{b \neq \sigma^*(u)} \bigg\{ \sum_{v \in V_{i_0}, v \neq u} \log \bigg(\frac{\overline{p}_{b,\sigma^*(v)}(x_{uv}; \|u - v\|)}{\overline{p}_{a,\sigma^*(v)}(x_{uv}; \|u - v\|)} \bigg) \leq -c_1 \log n \bigg\}
    \end{equation*}
    implies that $A(\sigma) \leq -d_H(\sigma, \sigma^*) c_1 \log n$ for any labeling $\sigma$ because there are $d_H(\sigma, \sigma^*)$ vertices $u \in V_{i_0}$ for which $\sigma(u) \neq \sigma^*(u)$. Therefore, we obtain that
    \begin{equation*}
        \P\bigg(\bigcap_{\sigma:V_{i_0} \to Z} \{A(\sigma) \leq -d_H(\sigma, \sigma^*) c_1 \log n \} \bigg) = 1 - o(1),
    \end{equation*}
    which gives a high-probability upper bound for $A(\sigma)$ over all labelings.

    Now, we consider the second summation $B(\sigma)$. Following the same calculations as in \cite{Gaudio+2025}, we obtain
    \begin{equation}
    \label{eq:B_sigma_expression}
    \begin{aligned}
        B(\sigma) &= \sum_{\substack{v \in V_{i_0} \\ \sigma(v) \neq \sigma^*(v)}} \sum_{u \in V_{i_0}, u \neq v}  \log \bigg(\frac{\overline{p}_{\sigma^*(u)\sigma(v)}(x_{uv}; \|u - v\|)}{\overline{p}_{\sigma^*(u)\sigma^*(v)}(x_{uv}; \|u - v\|)} \bigg) \\
        &\qquad+ \sum_{\substack{v \in V_{i_0} \\ \sigma(v) \neq \sigma^*(v)}} \sum_{\substack{u \in V_{i_0}, u \neq v \\ \sigma(u) \neq \sigma^*(u)}} \log \bigg(\frac{\overline{p}_{\sigma(u)\sigma(v)}(x_{uv}; \|u - v\|)}{\overline{p}_{\sigma(u)\sigma^*(v)}(x_{uv}; \|u - v\|)} \bigg) + \log \bigg(\frac{\overline{p}_{\sigma^*(u)\sigma^*(v)}(x_{uv}; \|u - v\|)}{\overline{p}_{\sigma^*(u)\sigma(v)}(x_{uv}; \|u - v\|)} \bigg) 
    \end{aligned}
    \end{equation}
    Let $B_1(\sigma)$ denote the first summation and $B_2(\sigma)$ denote the second summation in  \eqref{eq:B_sigma_expression}. Since $B_1(\sigma) = A(\sigma)$, we obtain that
    \begin{equation*}
        \P\bigg(\bigcap_{\sigma:V_{i_0} \to Z} \{B_1(\sigma) \leq -d_H(\sigma, \sigma^*) c_1 \log n \} \bigg) = 1 - o(1)
    \end{equation*}
    Then, by Assumption \ref{assump:bounded_log_likelihood}, we obtain that $B_2(\sigma) \leq 2 \eta d_H(\sigma, \sigma^*)^2$ because each term is at most $2 \eta$ and there are $d_H(\sigma, \sigma^*)^2$ total terms. 

    Therefore, with high probability we have $A(\sigma) \leq -d_H(\sigma, \sigma^*)c_1 \log n$, $B_1(\sigma) \leq -d_H(\sigma, \sigma^*)c_1 \log n$, and $B_2(\sigma) \leq 2\eta d_H(\sigma, \sigma^*)^2$ for all $\sigma$, which implies that
    \begin{equation}
    \label{eq:MAP_log_likelihood_difference_final}
        \ell_0(G, \sigma) - \ell_0(G, \sigma^*) \leq -2d_H(\sigma, \sigma^*)c_1 \log n + 2 \eta d_H(\sigma, \sigma^*)^2
    \end{equation}
    for all $\sigma$ with high probability. When $0 < \text{DISC}(\sigma, \sigma^*) < c \log n$ for any $c \leq c_1/\eta$, we obtain that \eqref{eq:MAP_log_likelihood_difference_final} is negative, which means that with high probability,
    \begin{equation*}
        \forall \sigma \text{ such that } 0 < \text{DISC}(\sigma, \sigma^*) < c \log n, \text{ we have } \ell_0(G,\sigma) - \ell_0(G, \sigma^*) < 0.
    \end{equation*}
\end{proof}

\begin{proof}[Proof of Proposition \ref{prop:high_discrepancy}]
Fix $c \in (0, 1)$ and $\sigma \in X_0^*(\epsilon)$ such that $\text{DISC}(\sigma, \sigma^*) \geq c \log n$. We will bound the probability that $\ell_0(G, \sigma) - \ell_0(G, \sigma^*) \geq 0$ (i.e. that $\sigma$ causes the restricted MLE to fail), then apply a union bound over all high-discrepancy labelings. Using the Chernoff bound, for any $t > 0$, conditioned on the locations of vertices given by $L_{i_0}$,
\begin{align}
    \P(\ell_0(G, \sigma) - \ell_0(G, \sigma^*) > 0 \mid L_{i_0}) &\leq \E\big[\exp\big(t(\ell_0(G, \sigma) - \ell_0(G, \sigma^*))\big) \mid L_{i_0}\big] \nonumber \\
    &= \E\bigg[\prod_{u \in V_{i_0}} \prod_{v \in V_{i_0}, v \neq u} \bigg(\frac{p_{\sigma(u)\sigma(v)}(x_{uv};\|u-v\|)}{p_{\sigma^*(u)\sigma^*(v)}(x_{uv};\|u-v\|)}\bigg)^t \: \Big| \: L_{i_0}\bigg] \nonumber \\
    &= \prod_{u \in V_{i_0}} \prod_{v \in V_{i_0}, v \neq u} \E\bigg[ \bigg(\frac{p_{\sigma(u)\sigma(v)}(x_{uv};\|u-v\|)}{p_{\sigma^*(u)\sigma^*(v)}(x_{uv};\|u-v\|)}\bigg)^t \: \Big| \: L_{i_0} \bigg] \nonumber \\
    &=  \prod_{u \in V_{i_0}} \prod_{v \in V_{i_0}, v \neq u} \phi_t(\overline{P}_{\sigma(u)\sigma(v)}(\|u-v\|), \overline{P}_{\sigma^*(u)\sigma^*(v)}(\|u-v\|)). \label{eq:likelihood-Chernoff}
\end{align}
Observe that for every $u \in V_{i_0}$, the number of vertices $v \in V_{i_0}$ such that 
\[\phi_t(\overline{P}_{ij}(\|u-v\|), \overline{P}_{i'j'}(\|u-v\|)) > \Phi_1'\]
for some $i,j,i',j'$ with $P_{ij} \not \equiv P_{i'j'}$ is at most $\frac{\epsilon}{2k} \cdot \epsilon_0 \log n$ with probability $1 - n^{-\Omega(1)}$ by a binomial Chernoff bound, using Corollary \ref{cor:distinguishing_vertices}. Taking a union bound, we see that the same holds simultaneously for all $u \in V_{i_0}$ with probability $1-o(1)$, and we refer to this event as $\{L_{i_0} \in \mathcal{L}_{\text{good}}\}$. 

Now, let $N_{ij} := \{u \in V_{i_0}: x^{\star}(u) = i, x(u) = j\}$ and let $n_{ij} := |N_{ij}|$ for any $i,j \in Z$. 
We split the analysis into two cases. 

Case 1: \textit{There exist $a,b,b'$ be such that $n_{ab}, n_{ab'} \geq \epsilon \epsilon_0 \log (n)/k$.} Take $z \in Z$ so that $P_{bz} \not \equiv P_{b'z}$, which exists due to Assumption \ref{assump:regularity_condition} as otherwise $b,b'$ would be statistically indistinguishable. Let $y \in Z$ satisfy $n_{yz} \geq |u : \sigma(u) = z|/k$. Using $\sigma \in X_0^{\star}(\epsilon)$ and the assumption $\epsilon \leq \pi_{\text{min}}/3$, we get $n_{yz} \geq \frac{\epsilon \epsilon_0 \log n}{k}$.

Since $P_{bz} \not \equiv P_{b'z}$, either $P_{ay} \not \equiv P_{bz}$ or $P_{ay} \not \equiv P_{b'z}$. Observe that for every $u \in V_{i_0}$, either the number of vertices $v \in V_{i_0}$ such that 
\[\phi_t(\overline{P}_{bz}(\|u-v\|), \overline{P}_{ay}(\|u-v\|)) > \Phi_1'\]
is at most $\frac{\epsilon}{2k} \cdot \epsilon_0 \log n$,
or the number of vertices $v \in V_{i_0}$ such that
\[\phi_t(\overline{P}_{b'z}(\|u-v\|), \overline{P}_{ay}(\|u-v\|)) > \Phi_1'\]
is at most $\frac{\epsilon}{2k} \cdot \epsilon_0 \log n$ whenever $L_{i_0} \in \mathcal{L}_{\text{good}}$. In that situation, we claim that \eqref{eq:likelihood-Chernoff} is upper-bounded by $(\Phi_1')^{\frac{1}{2}\left(\frac{\epsilon}{k} \cdot \epsilon_0 \log n \right)^2}$. To see this, suppose $P_{ay} \not \equiv P_{bz}$. Recall that $n_{ab} \geq \epsilon \epsilon_0 \log(n)/k$. For every $u \in N_{ab}$, the number of vertices $v \in N_{yz}$ such that \[\phi_t(\overline{P}_{bz}(\|u-v\|), \overline{P}_{ay}(\|u-v\|)) \leq \Phi_1'\]
is at least $n_{yz} - \frac{\epsilon}{2k} \cdot \epsilon_0 \log n \geq \frac{\epsilon}{2k} \cdot \epsilon_0 \log n$. The same reasoning applies if $P_{ay} \not \equiv P_{b'z}$.

Case 2: \textit{For every community $i \in Z$, there is at most one $j \in Z$ such that $n_{ij} \geq \epsilon \epsilon_0 \log (n)/k$.} Let $\omega : Z \to Z$ be a mapping such that $n_{i \omega(i)} \geq \epsilon \epsilon_0 \log (n) /k$. We note that $\omega$ is unique, as established in \cite[Proposition D.6]{Gaudio+2025}. Following the steps of \cite[Proposition D.6]{Gaudio+2025}, using the assumptions on $\epsilon, \epsilon_0, c$, we can show that there must exist $(a,b)$ for which $P_{ab} \not \equiv P_{\omega(a), \omega(b)}$. Similarly to Case 1, we obtain that \eqref{eq:likelihood-Chernoff} is upper-bounded by $(\Phi_1')^{\frac{1}{2}\left(\frac{\epsilon}{k} \cdot \epsilon_0 \log n \right)^2}$ whenever $L_{i_0} \in \mathcal{L}_{\text{good}}$.

Finally, putting both cases together yield a uniform constant $C \in (0,1)$ such that
\[\mathbb{P}(\ell_0(G,\sigma) -\ell_0(G, \sigma^{\star}) > 0 \mid L_{i_0} \in \mathcal{L}_{\text{good}}) \leq C^{\log^2 n}\]
for all $\sigma \in X_0^{\star}(\epsilon)$ satisfying $\text{DISC}(\sigma, \sigma^{\star}) > c \log n$. Taking a union bound over such labelings, of which there are at most $k^{\epsilon_0 \log n}$, yields
\[\mathbb{P}\Bigg(\bigcup_{\underset{ \text{DISC}(\sigma, \sigma^{\star}) > c \log n}{\sigma \in X_0^{\star}(\epsilon) :}} \left\{\ell_0(G,\sigma) - \ell_0(G, \sigma^{\star} > 0 \right\} \: \bigg| \: L_{i_0} \in \mathcal{L}_{\text{good}}\Bigg) \leq k^{\epsilon_0 \log n} \cdot C^{\log^2 n}. \]
Finally, since $\mathbb{P}(L_{i_0} \in \mathcal{L}_{\text{good}}) = 1-o(1)$
, it follows that
\[\mathbb{P}\Bigg(\bigcup_{\underset{ \text{DISC}(\sigma, \sigma^{\star}) > c \log n}{\sigma \in X_0^{\star}(\epsilon) :}} \left\{\ell_0(G,\sigma) - \ell_0(G, \sigma^{\star} > 0 \right\} \Bigg) \leq k^{\epsilon_0 \log n} \cdot C^{\log^2 n} + o(1) = o(1). \]
\end{proof}

\subsection{Propagate}
\label{subappx:propagate_proof}
In this section, we show that using \texttt{Propagate} to label all remaining $\delta$-occupied blocks yields almost-exact recovery. For any block $V_i$, let $C_i$ denote the location, true label, and estimated label of each vertex in $V_i$, which we call the configuration of $V_i$. We first provide a result showing that each parent block has sufficient ``useful'' vertices. 

\begin{definition}
    Fix a vertex $v \in V \setminus V_{i_0}$, letting $v \in V_i$ without loss of generality, and suppose that $V_{p(i)}$ is the parent block. Let $j = \argmax_{a \in Z} |\{ u \in V_{p(i)}: \hat{\sigma}(u) = a \}|$ be the largest community in $V_{p(i)}$ as determined by $\hat{\sigma}$. We say that $v$ is $\Phi$-\textit{distinguished} by $V_{p(i)}$ is there exist at least $(\delta \log n)/(2k)$ vertices $u \in V_{p(i)}$ with $\hat{\sigma}(u) = j$ such that
    \begin{equation*}
        \phi_t(\overline{P}_{ab}(\|u-v\|), \overline{P}_{a'b'}(\|u-v\|)) < \Phi
    \end{equation*}
    for all $a,b,a',b' \in Z$ with $P_{ab} \not \equiv P_{a'b'}$.
\end{definition}

\begin{lemma}
\label{lem:propagate_distinguishing_vertices}
    There exists $0 < \Phi < 1$ such that all vertices $v \in V \setminus V_{i_0}$ are $\Phi$-distinguished by their respective parent block with high probability.
\end{lemma}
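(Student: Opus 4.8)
The plan is to reduce the lemma to a concentration estimate on the number of ``non-distinguishing'' vertices in a parent block, exploiting the fact that whether a vertex $u \in V_{p(i)}$ distinguishes $v$ depends only on the distance $\|u-v\|$ and not on any community labels. First I would pass to rescaled coordinates: writing $\tilde u = u/(\log n)^{1/d}$ and recalling $\overline{P}_{ab}(\|u-v\|) = P_{ab}(\|u-v\|/(\log n)^{1/d})$, the quantity $\phi_t(\overline{P}_{ab}(\|u-v\|), \overline{P}_{a'b'}(\|u-v\|))$ equals $\phi_t(P_{ab}(D), P_{a'b'}(D))$ with $D = \|\tilde u - \tilde v\|$. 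Since $B_{p(i)} \visible B_i$, every such rescaled displacement has norm at most $r$, so for fixed $v$ the point $\tilde u - \tilde v$ is uniform over a region $A \subseteq B_{r,d}$ of volume exactly $r^d \chi$. Corollary \ref{cor:distinguishing_vertices}, applied with this $A$ and the fixed $t \in (0,1)$, then supplies, for any target $\epsilon > 0$, a constant $\gamma > 0$ (depending only on $\epsilon$, $t$, and $r^d\chi$) such that a uniformly placed $u$ fails to distinguish $v$ with probability at most $\epsilon$; I would set $\Phi := 1 - \gamma$.

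Next I would observe that it suffices to bound the \emph{total} number of non-distinguishing vertices in $V_{p(i)}$, ignoring labels entirely. Since $V_{p(i)} \in V^\dagger$ it contains at least $\delta\log n$ vertices, and as $\hat\sigma$ partitions them into at most $k$ communities, the largest community $j$ has at least $(\delta\log n)/k$ vertices. Hence if fewer than $(\delta\log n)/(2k)$ vertices of the whole block are non-distinguishing for $v$, then at least $(\delta\log n)/k - (\delta\log n)/(2k) = (\delta\log n)/(2k)$ of the community-$j$ vertices are distinguishing, which is exactly the condition that $v$ be $\Phi$-distinguished. Crucially, the event that the number of non-distinguishing vertices is small is measurable with respect to the vertex locations alone and is therefore independent of the label estimate $\hat\sigma$, which sidesteps the correlation between $\hat\sigma$ and the geometry.

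The concentration step is then routine. Conditioning on $v$ and on the number $M$ of vertices in $V_{p(i)}$, these $M$ vertices are i.i.d.\ uniform in $B_{p(i)}$, so the count of non-distinguishing vertices is stochastically dominated by a $\mathrm{Binomial}(M, \epsilon)$ variable. I would first invoke the Poisson tail bound (Lemma \ref{lem:poisson_Chernoff}) to ensure $M \leq C \log n$ for a large constant $C$ outside an event of probability $o(1/n)$, and then apply the binomial Chernoff bound (Lemma \ref{lem:binomial_Chernoff}) to control the deviation of the count above its mean $\leq \epsilon C\log n$ up to the threshold $(\delta\log n)/(2k)$. A union bound over the $O(n)$ vertices $v$ finishes the argument, provided each per-vertex failure probability is $o(1/n)$.

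The main obstacle, and the genuinely new feature relative to \cite{Gaudio+2025}, is that $\delta$ (and $\chi$) are small constants, so the threshold $(\delta\log n)/(2k)$ is only a small multiple of $\log n$ and a naive Chernoff estimate yields failure probability $n^{-\Theta(\delta)}$, which the union bound cannot absorb. The resolution is to exploit that $\Phi$ (equivalently $\gamma$, equivalently $\epsilon$) is ours to choose after $C$, $\delta$, $k$ are fixed: taking $\epsilon$ small enough that the mean $\epsilon C\log n$ is a tiny fraction of the threshold forces the binomial bound into the regime $(e\cdot\mathrm{mean}/\mathrm{threshold})^{\mathrm{threshold}} \le (\kappa\epsilon)^{(\delta/(2k))\log n}$ for a constant $\kappa = 2ekC/\delta$; choosing $\epsilon < \kappa^{-1} e^{-2k/\delta}$ makes the exponent exceed $\log n$ and hence drives the per-vertex probability to $o(1/n)$, as required. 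I would carry out the choices in the order $C$, then $\epsilon$, then $\gamma$ and $\Phi$, to avoid any circularity.
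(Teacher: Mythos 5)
Your proposal is correct and follows essentially the same route as the paper: invoke Corollary \ref{cor:distinguishing_vertices} to make the per-vertex probability of being non-distinguishing an arbitrarily small constant, reduce ``$\Phi$-distinguished'' to a bound on the total number of non-distinguishing vertices in the parent block (subtracted from the largest community's guaranteed $(\delta \log n)/k$ vertices), dominate that count by a binomial, and push the Chernoff exponent below $-1$ by shrinking the constant before union-bounding over vertices. Your version is if anything slightly more careful than the paper's, which dominates by $\mathrm{Bin}(\delta \log n, \alpha)$ rather than conditioning on the actual block size being at most $C\log n$, but this is a refinement of the same argument rather than a different one.
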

\begin{proof}
    Fix a vertex $v \in V \setminus V_{i_0}$ with parent block $V_{p(i)}$. Let $D$ be the distance from $v$ to a vertex placed uniformly at random in the parent block $B_{p(i)}$. Fix $\alpha > 0$, and observe that there exists $0 < \Phi < 1$ such that 
    \begin{equation*}
        \phi_t(\overline{P}_{ab}(D), \overline{P}_{a'b'}(D)) \geq \Phi
    \end{equation*}
    for some $a,b,a',b' \in Z$ with $P_{ab} \not \equiv P_{a'b'}$ with probability at most $\alpha$ due to Corollary \ref{cor:distinguishing_vertices}. Note that $\Phi$ is uniform over all $v \in V$. Now, let $j = \argmax_{a \in Z} |\{ u \in V_{p(i)}: \hat{\sigma}(u) = a \}|$ and observe that the number of vertices $u \in V_{p(i)}$ for which 
    \begin{equation*}
        \phi_t(\overline{P}_{ab}(\|u-v\|), \overline{P}_{a'b'}(\|u-v\|)) \geq \Phi
    \end{equation*}
    for some $a,b,a',b' \in Z$ with $P_{ab} \not \equiv P_{a'b'}$ is stochastically dominated by a $\text{Bin}(\delta\log n, \alpha)$ random variable. To show that $v$ is $\Phi$-distinguished by $V_{p(i)}$, we must show that the number of such vertices is at most $(\delta \log n) / (2k)$. Hence, we apply the Chernoff bound (Lemma \ref{lem:binomial_Chernoff}), which yields
    \begin{align}
    \label{eq:lack_distinguishing_vertices_prob}
        \P\left(\text{Bin}\left(\delta \log n, \alpha \right) \geq \frac{\delta \log n}{2k} \right) 
        \notag &\leq \left( \frac{e^{(1/(2\alpha k)) - 1}}{(1/(2\alpha k))^{(1/(2\alpha k))}}\right)^{(\alpha \delta \log n)} \notag \\
        &= \exp\left(\log\left(\frac{e^{(1/(2\alpha k)) - 1}}{(1/(2\alpha k))^{(1/(2\alpha k))}}\right) \alpha \delta \log n \right) \notag \\
        &= \exp\left(\left(\frac{1}{2\alpha k} - 1 - \frac{1}{2\alpha k}\log\left(\frac{1}{2\alpha k}\right)\right) \alpha \delta \log n \right) \notag \\
        &= \exp\left(\left(1 - 2\alpha k + \log(2\alpha k)\right) \frac{\delta \log n}{2k} \right) \notag \\
        &= n^{\delta (1 - 2\alpha k + \log(2\alpha k)) / (2k) }.
    \end{align}
    Choosing $\alpha > 0$ small enough, we obtain that \eqref{eq:lack_distinguishing_vertices_prob} is $o(1/n)$. Therefore, the number of vertices $u \in V_{p(i)}$ for which 
    \begin{equation*}
        \phi_t(\overline{P}_{ab}(\|u-v\|), \overline{P}_{a'b'}(\|u-v\|)) \geq \Phi
    \end{equation*} 
     for some $a,b,a',b' \in Z$ with $P_{ab} \not \equiv P_{a'b'}$ is at most $(\delta \log n) / (2k)$ with probability $o(1/n)$. Since there are at least $(\delta \log n) / k$ vertices $u \in V_{p(i)}$ such that $\hat{\sigma}(u) = j$, we obtain that $v$ is $\Phi$-distinguished by $V_{p(i)}$ with probability $1 - o(1/n)$.

    We would now like to take a union bound over all vertices $v \in V$, of which there are $\Theta(n)$ many, to obtain that all vertices are $\Phi$-distinguished by their parent block with high probability. However, since the number of vertices is random, we use a union bound over a high probability upper bound on the number of vertices. Let $c > \lambda$, and note that $\P(|V| \geq cn) = o(1)$ by the Poisson Chernoff bound (Lemma \ref{lem:poisson_Chernoff}). Enumerate the vertices as $v_1, v_2, \ldots, v_{|V|}$. Let $E_i := \{|V| \geq i\}$ be the event that $v_i$ exists, and $F_i$ be the event that $v_i$ is $\Phi$-distinguished by its parent block. Noting that $\{E_i^c \cup F_i\}$ is the event that the $i^\text{th}$ vertex either does not exist or is $\Phi$-distinguished, observe that
    \begin{align*}
        \P(\bigcap_{i=1}^\infty \{E_i^c \cup F_i\}) &= 1 - \P(\bigcup_{i=1}^\infty \{E_i \cap F_i^c\} \\
        &\geq 1 - \sum_{i=1}^{cn} \P(F_i^c) - \P(\bigcup_{i=cn+1}^\infty E_i) \\
        &= 1 - \sum_{i=1}^{cn}o(1/n) - \P(|V| > cn) \\
        &= 1 - o(1)
    \end{align*}
    Therefore, all vertices $v \in V$ are $\Phi$-distinguished by their parent block with high probability.
\end{proof}

We now bound the probability that $\hat{\sigma}$ makes more than a constant $M$ mistakes in any given block using \texttt{Propagate}, adapting the proof from \cite{Gaudio+2025}. Let $\omega^* \in \Omega_{\pi, P}$ be the permissible relabeling corresponding to the estimated labels of the initial block, that is $\hat{\sigma}(v) = \omega^* \circ \sigma^*(v)$ for all $v \in V_{i_0}$. We condition on the event that $\omega^*$ exists, which occurs with high probability due to Proposition \ref{prop:MAP}.

\begin{prop}
\label{prop:propagation_error_block}
Let $\delta > 0$ and $\Phi_2$ be such that Lemma \ref{lem:propagate_distinguishing_vertices} is satisfied. Suppose that Assumptions \ref{assump:bounded_log_likelihood} and \ref{assump:distinctness} hold. Define $c_2 := \delta \log(1/\Phi_2) / (3k)$, $\eta_2 := e^{\eta M}$, and $M := 5/(4c_2)$. Let $B_i$ be a $\delta$-occupied block with vertices $V_i$, and let $B_{p(i)}$ with vertices $V_{p(i)}$ be its parent block. Suppose that $V_{p(i)}$ has configuration $C_{p(i)}$ such that $\hat{\sigma}$ makes at most $M$ mistakes on $V_{p(i)}$ with permissible relabeling $\omega^*$ and that all vertices $v \in V_i$ are $\Phi_2$-distinguished by $V_{p(i)}$. Suppose that $\hat{\sigma}$ labels $V_i$ using Propagate. Then, the probability that $\hat{\sigma}$ labels any given vertex $v \in V_i$ incorrectly is
\begin{equation*}
    \P(\hat{\sigma}(v) \neq \omega^* \circ \sigma^*(v) \mid C_{p(i)}) \leq \eta_2 n^{-c_2}.
\end{equation*}
Furthermore, suppose that $|V_i| < \Delta \log n$ for some constant $\Delta$, and let $\eta_3 := e^M(\eta_2 \Delta / M)^M$. Then, the probability that $\hat{\sigma}$ makes more than $M$ mistakes on $V_i$ is
\begin{equation*}
    \P\bigg( | \{v \in V_i: \hat{\sigma}(v) \neq \omega^* \circ \sigma^*(v)\} | > M \: \Big| \: C_{p(i)}, |V_i| \leq \Delta \log n \bigg) \leq \eta_3 n^{-9/8}.
\end{equation*} 
\end{prop}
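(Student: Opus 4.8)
The plan is to establish the per-vertex bound via a conditional Chernoff argument, then bootstrap to the block-level bound using independence across the vertices of $V_i$.

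For the first inequality, fix $v \in V_i$ and condition on $C_{p(i)}$, the location of $v$, and the true label $\sigma^*(v)$; write $a := \omega^* \circ \sigma^*(v)$ and let $m := \argmax_{z \in Z}|\{u \in V_{p(i)} : \hat\sigma(u) = z\}|$ be the largest estimated community in the parent, which is the set \texttt{Propagate} reads from. A mistake means that for some $j \neq a$,
\[
\sum_{u \in V_{p(i)} : \hat\sigma(u) = m} \log\frac{\overline p_{m,j}(x_{uv}; \|u-v\|)}{\overline p_{m,a}(x_{uv}; \|u-v\|)} \geq 0,
\]
so I would union bound over the $k-1$ choices of $j$ and, for each, apply the Chernoff bound with the fixed parameter $t \in (0,1)$. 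Analogously to \eqref{eq:likelihood-Chernoff}, the conditional moment generating function factorizes into a product over $u$ of $\phi_t(\overline P_{m,j}(\|u-v\|), \overline P_{m,a}(\|u-v\|))$ evaluated under the \emph{true} law of $x_{uv}$.

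The crux is to control this product by classifying the vertices $u$ with $\hat\sigma(u) = m$. Since $\omega^*$ is permissible, for a correctly labeled such $u$ (i.e. $\omega^* \circ \sigma^*(u) = m$) the edge $x_{uv}$ is genuinely drawn from $\overline P_{m,a}$, so its factor is exactly $\phi_t(\overline P_{m,j}(\|u-v\|), \overline P_{m,a}(\|u-v\|)) \leq 1$; if in addition $u$ is one of the $\Phi_2$-distinguishing vertices guaranteed by Lemma \ref{lem:propagate_distinguishing_vertices}, the factor is strictly below $\Phi_2$, using Assumption \ref{assump:distinctness} to ensure $P_{m,a} \not\equiv P_{m,j}$ so that the distinguishing inequality applies to this pair. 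Because $\hat\sigma$ makes at most $M$ mistakes on $V_{p(i)}$, at least $(\delta\log n)/(2k) - M$ of the distinguishing community-$m$ vertices are correctly labeled, contributing $\Phi_2^{(\delta\log n)/(2k) - M}$; the remaining $\leq M$ mislabeled vertices each contribute at most $e^\eta$ by Assumption \ref{assump:bounded_log_likelihood}, for a total of at most $e^{M\eta} = \eta_2$. Collecting these and unioning over $j$ yields a bound of order $(k-1)\Phi_2^{-M}\,\eta_2\, n^{-\delta\log(1/\Phi_2)/(2k)}$; since $c_2 = \delta\log(1/\Phi_2)/(3k)$ leaves a factor $n^{-\delta\log(1/\Phi_2)/(6k)} \to 0$ of slack, this is at most $\eta_2 n^{-c_2}$ for $n$ large, giving the first claim.

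For the second inequality, observe that conditioned on $C_{p(i)}$ and the locations of $V_i$, the label \texttt{Propagate} assigns each $v \in V_i$ depends only on $\{x_{uv} : u \in V_{p(i)}\}$ and the true label of $v$; these are disjoint across distinct $v$, so the mistake events are mutually independent, each of probability at most $\eta_2 n^{-c_2}$ by the first part. Hence on $\{|V_i| \leq \Delta \log n\}$ the number of mistakes is stochastically dominated by a $\mathrm{Bin}(\Delta\log n, \eta_2 n^{-c_2})$ variable, and a union bound over the size-$M$ subsets of potentially-mislabeled vertices gives
\[
\P\big(\text{more than } M \text{ mistakes} \mid C_{p(i)}, |V_i| \leq \Delta\log n\big) \leq \binom{\Delta\log n}{M}(\eta_2 n^{-c_2})^M \leq \Big(\tfrac{e\eta_2\Delta}{M}\Big)^M (\log n)^M\, n^{-c_2 M}.
\]
Since $M = 5/(4c_2)$ gives $c_2 M = 5/4$ and $\eta_3 = e^M(\eta_2\Delta/M)^M = (e\eta_2\Delta/M)^M$, the right side equals $\eta_3 (\log n)^M n^{-5/4}$, and because $(\log n)^M = o(n^{1/8})$ this is at most $\eta_3 n^{-9/8}$ for $n$ large.

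I expect the main obstacle to be the first part: correctly bookkeeping the three classes of parent vertices under the permissible relabeling $\omega^*$. One must verify that permissibility aligns the true edge law $\overline P_{m,a}$ with the likelihood ratio the algorithm computes (so that $\phi_t$ is the right quantity), that Assumption \ref{assump:distinctness} licenses the use of $\Phi_2$-distinguishing for the specific pair $(m,a),(m,j)$, and that the $\leq M$ mislabeled parent vertices cannot overturn the exponential gain $\Phi_2^{(\delta\log n)/(2k)}$ — this is exactly where the $1/3$ versus $1/2$ discrepancy in $c_2$, together with the bounded-likelihood constant $e^{M\eta}$, supplies the necessary slack.
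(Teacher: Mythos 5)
Your proposal is correct and follows essentially the same route as the paper: a conditional Chernoff bound whose MGF factorizes into $\phi_t$ terms, a three-way classification of parent vertices (correctly labeled and distinguishing, correctly labeled but not, and the $\leq M$ mislabeled ones bounded by $e^{\eta}$ each via Assumption \ref{assump:bounded_log_likelihood}), followed by stochastic domination by a $\mathrm{Bin}(\Delta\log n, \eta_2 n^{-c_2})$ variable and a tail bound yielding $\eta_3(\log n)^M n^{-5/4} \leq \eta_3 n^{-9/8}$. The only cosmetic differences are that you absorb the $(k-1)$ and $\Phi_2^{-M}$ factors using the slack between $1/(2k)$ and $1/(3k)$ in $c_2$ (the paper instead uses $(\delta\log n)/(2k)-M \geq (\delta\log n)/(3k)$ for large $n$), and you use the union-over-subsets binomial tail bound rather than the Chernoff form, which gives the same expression $e^M(\mu/M)^M$.
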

\begin{proof}
Let $j := \argmax_{a \in Z} |\{u \in S: \hat{\sigma}_S(u) = a\}|$ be the largest community in $V_{p(i)}$ according to $\hat{\sigma}$. Suppose that $\omega^* \circ \sigma^*(v) = \ell$. We must upper bound the probability that $\hat{\sigma}$ labels $v$ as anything other than $\ell$. Let $s \neq \ell \in Z$, we start by upper bounding the probability that $\hat{\sigma}$ labels $v$ as $s$, which is 
\begin{equation*}
    \P(\hat{\sigma}(v) = s \mid C_{p(i)}, \omega^* \circ \sigma^*(v) = \ell).
\end{equation*}
Observe that this only occurs when the likelihood of $v$ having label $s$ exceeds the likelihood of $v$ having label $\ell$, with respect to the vertices $u \in V_{p(i)}$ with $\hat{\sigma}(u) = j$. Hence, using the Chernoff bound, we obtain that for any $t > 0$,
\begin{align}
\label{eq:propagate_single_vertex_error_1}
    &\P(\hat{\sigma}(v) = s \mid C_{p(i)}, \omega^* \circ \sigma^*(v) = \ell) \notag \\
    &\qquad\leq \P\bigg(\sum_{u \in V_{p(i)}: \hat{\sigma}(u) = j} \log(\overline{p}_{js}(x_{uv};\|u-v\|) - \log(\overline{p}_{j\ell}(x_{uv};\|u-v\|) \geq 0 \: \Big| \: C_{p(i)}, \omega^* \circ \sigma^*(v) = \ell \bigg) \notag \\
    &\qquad= \P\bigg(\sum_{u \in V_{p(i)}: \hat{\sigma}(u) = j} \log\Big(\frac{\overline{p}_{js}(x_{uv};\|u-v\|)}{\overline{p}_{j\ell}(x_{uv};\|u-v\|)}\Big) \geq 0 \: \Big| \: C_{p(i)}, \omega^* \circ \sigma^*(v) = \ell \bigg) \notag \\
    &\qquad\leq \E\bigg[\exp\bigg(t\sum_{u \in V_{p(i)}: \hat{\sigma}(u) = j} \log\Big(\frac{\overline{p}_{js}(x_{uv};\|u-v\|)}{\overline{p}_{j\ell}(x_{uv};\|u-v\|)}\Big)\bigg) \: \Big| \: C_{p(i)}, \omega^* \circ \sigma^*(v) = \ell \bigg] \notag \\
    &\qquad= \prod_{u \in V_{p(i)}:\hat{\sigma}(u)=j} \E\bigg[ \Big(\frac{\overline{p}_{js}(x_{uv};\|u-v\|)}{\overline{p}_{j\ell}(x_{uv};\|u-v\|)}\Big)^t \: \Big| \: C_{p(i)}, \omega^* \circ \sigma^*(v) = \ell \bigg].
\end{align}
We split the product into two factors, depending on whether $\hat{\sigma}(u)$ is correct or not. Let
\begin{equation*}
    R_+(v) := \{u \in V_{p(i)}: \hat{\sigma}(u) = j, \omega^* \circ \sigma^*(u) = j\}
\end{equation*}
and 
\begin{equation*}
    R_-(v) := \{u \in V_{p(i)}: \hat{\sigma}(u) = j, \omega^* \circ \sigma^*(u) \neq j\}.
\end{equation*}
That is, $R_+(v)$ is the set of vertices which are labeled correctly and $R_-(v)$ is the set of vertices which are labeled incorrectly. Then, the product in \eqref{eq:propagate_single_vertex_error_1} becomes
\begin{align}
\label{eq:propagate_single_vertex_error_2}
    &\P(\hat{\sigma}(v) = s, \omega^* \circ \sigma^*(v) = \ell \mid C_{p(i)}, \omega^* \circ \sigma^*(v) = \ell) \notag \\
    &\quad= \prod_{u \in R_+(v)} \E\bigg[ \Big(\frac{\overline{p}_{js}(x_{uv};\|u-v\|)}{\overline{p}_{j\ell}(x_{uv};\|u-v\|)}\Big)^t \: \Big| \: C_{p(i)}, \omega^* \circ \sigma^*(v) = \ell \bigg] \cdot \prod_{u \in R_-(v)} \E\bigg[ \Big(\frac{\overline{p}_{js}(x_{uv};\|u-v\|)}{\overline{p}_{j\ell}(x_{uv};\|u-v\|)}\Big)^t \: \Big| \: C_{p(i)}, \omega^* \circ \sigma^*(v) = \ell \bigg] \notag \\ 
    &\quad\leq \prod_{u \in R_+(v)} \phi_t(\overline{P}_{js}(\|u-v\|), \overline{P}_{j\ell}(\|u-v\|)) \cdot \prod_{u \in R_-(v)} \exp(\eta) \notag \\
    &\quad\leq e^{\eta M} \prod_{u \in R_+(v)} \phi_t(\overline{P}_{js}(\|u-v\|), \overline{P}_{j\ell}(\|u-v\|)).
\end{align}
In the first inequality, the first product holds because $\omega^*$ is permissible, meaning that conditioned on $\omega^* \circ \sigma^*(u) = j$ we have $x_{uv} \sim P_{\sigma^*(u)\sigma^*(v)} = P_{j\ell}$, and the second product holds due to Assumption \ref{assump:bounded_log_likelihood}. The second inequality holds because $\hat{\sigma}$ makes at most $M$ mistakes in $V_{p(i)}$.

Now, we utilize the properties of $C_{p(i)}$ which we conditioned on. Since vertex $v$ is $\Phi_2$-distinguished by $V_{p(i)}$, under Assumption \ref{assump:distinctness} there are at least $(\delta \log n)/(2k)$ vertices $u \in V_{p(i)}$ with $\hat{\sigma}(u) = j$ such that 
\begin{equation*}
    \phi_t(\overline{P}_{js}(\|u-v\|), \overline{P}_{j\ell}(\|u-v\|)) \leq \Phi_2
\end{equation*}
Then, since $\hat{\sigma}$ makes at most $M$ mistakes on $V_{p(i)}$, at least $(\delta \log n)/(2k) - M \geq (\delta \log n)/(3k)$ of those vertices are in $R_+(v)$. Therefore, we can upper-bound \eqref{eq:propagate_single_vertex_error_2} with
\begin{align*}
    \P(\hat{\sigma}(v) = s \mid C_{p(i)}, \omega^* \circ \sigma^*(v) = \ell) &\leq e^{\eta M} \Phi_2^{\frac{\delta \log n}{3k}} \\
    &= e^{\eta M} n^{-\delta \log(1 / \Phi_2) / (3k)} \\
    &= \eta_2 n^{-c_2},
\end{align*}
where the last equality follows from the definition of $\eta_2$ and $c_2$. Since this upper bound holds no matter what the conditioned true label $v$ is, the bound applies unconditionally on the true label of $v$ as well
\begin{equation*}
    \P(\hat{\sigma}(v) \neq \omega^* \circ \sigma^*(v) \mid C_{p(i)}) \leq \eta_2 n^{-c_2},
\end{equation*}
which upper-bounds the probability of making a mistake on a particular vertex $v \in V_i$.

Now, we upper-bound the probability of making more than $M$ mistakes on $V_i$. Let $K_i := |\{v \in V_i: \hat{\sigma}(v) \neq \omega^* \circ \sigma^*(v)\}|$ be the number of mistakes that $\hat{\sigma}$ makes on $V_i$, and let $\calE_v := \{\hat{\sigma}(v) \neq \omega^* \circ \sigma^*(v)\}$ be the event that a particular vertex $v$ is mislabeled by $\hat{\sigma}$. Conditioned on $C_{p(i)}$, the events $\calE_v$ for $v \in V_i$ are independent because disjoint sets of edges, which are independently generated conditioned on $C_{p(i)}$, are used to classify different vertices. Therefore, conditioned on $C_{p(i)}$ and $|V_i| < \Delta \log n$, we have that $K_i \preceq \text{Bin}(\Delta \log n, \eta_2 n^{-c_2})$. Letting $\mu := \eta_2 n^{-c_2} \Delta \log n$, the binomial Chernoff bound (Lemma \ref{lem:binomial_Chernoff}) yields
\begin{equation*}
\begin{aligned}
    \P(K_i > M \mid C_{p(i)}, |V_i| \leq \Delta \log n) &\leq \bigg(\frac{e^{(M/\mu) - 1}}{(M/\mu)^{(M/\mu)}}\bigg)^\mu \\
    &\leq e^M \Big(\frac{\mu}{M}\Big)^M \\
    &= e^M \Big(\frac{\eta_2 n^{-c_2} \Delta \log n}{M}\Big)^M \\
    &= e^M \Big(\frac{\eta_2 \Delta}{M}\Big)^M (\log n)^M n^{-c_2 M} \\
    &= \eta_3 (\log n)^M n^{-c_2 M}.
\end{aligned}
\end{equation*}
where the last equality follows from the definition of $\eta_3$. Finally, since $n^{-c_2 M} = n^{-5/4}$ by definition of $M$ and $(\log n)^M < n^{1/8}$ for sufficiently large $n$, we have that
\begin{equation*}
    \P(K_i > M \mid C_{p(i)}, |V_i| \leq \Delta \log n) \leq \eta_3 n^{-9/8} = o(1/n).
\end{equation*}
which shows that the probability of making more than $M$ mistakes on $V_i$ is $o(1/n)$.
\end{proof}

We now apply Proposition \ref{prop:propagation_error_block} to show that $\hat{\sigma}$ makes at most $M$ mistakes in all $\delta$-occupied blocks with probability $1 - o(1)$. Note that to apply Proposition \ref{prop:propagation_error_block}, all blocks must have at most $\Delta \log n$ vertices for some constant $\Delta$, which is given by a straightforward modification of Lemma D.1 in \cite{Gaudio+2024}.
\begin{lemma}
\label{lem:maximum_vertices_per_block}
    For the blocks obtained in Line \ref{line:construct_blocks} in Algorithm \ref{alg:exact_recovery}, there exists an explicitly computable constant $\Delta > 0$ such that 
    \begin{equation*}
        \P\bigg( \bigcap_{i = 1}^{n / (r^d \chi \log n)} \Big\{ |V_i| \leq \Delta \log n \Big\} \bigg) = 1 - o(1).
    \end{equation*}
\end{lemma}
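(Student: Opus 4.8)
The plan is to exploit the fact that the vertex set is generated by a homogeneous Poisson point process, so that the block counts $|V_i|$ are independent Poisson random variables, and then to control their maximum via a union bound combined with the Poisson Chernoff bound of Lemma \ref{lem:poisson_Chernoff}. First I would observe that, since each block $B_i$ has volume $r^d \chi \log n$ and the point process has intensity $\lambda$, the count $|V_i|$ is distributed as $\text{Pois}(\mu)$ with $\mu := \lambda r^d \chi \log n$, and moreover these counts are mutually independent across the $N := n/(r^d \chi \log n)$ disjoint blocks. By the union bound,
\[
\P\bigg(\bigcup_{i=1}^N \{|V_i| > \Delta \log n\}\bigg) \leq N \cdot \P\big(\text{Pois}(\mu) > \Delta \log n\big).
\]

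Next I would apply the upper-tail Poisson Chernoff bound (Lemma \ref{lem:poisson_Chernoff}) with $t := (\Delta - \lambda r^d \chi)\log n$, which is positive whenever $\Delta > \lambda r^d \chi$. Writing $a := \lambda r^d \chi$ so that $\mu = a \log n$ and $\mu + t = \Delta \log n$, this yields
\[
\P\big(\text{Pois}(\mu) \geq \Delta \log n\big) \leq \exp\left(-\frac{t^2}{2(\mu + t)}\right) = \exp\left(-\frac{(\Delta - a)^2}{2\Delta}\log n\right) = n^{-(\Delta - a)^2/(2\Delta)}.
\]
Since $N = n/(r^d \chi \log n)$, combining the two displays gives
\[
\P\bigg(\bigcup_{i=1}^N \{|V_i| > \Delta \log n\}\bigg) \leq \frac{1}{r^d \chi \log n}\, n^{\,1 - (\Delta - a)^2/(2\Delta)},
\]
which is $o(1)$ precisely when $(\Delta - a)^2/(2\Delta) > 1$, i.e. when $\Delta^2 - 2(a+1)\Delta + a^2 > 0$. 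This quadratic holds for every $\Delta > (a+1) + \sqrt{2a+1}$, so taking, for instance, $\Delta := \lambda r^d \chi + 2 + \sqrt{2\lambda r^d \chi + 1}$ yields an explicit admissible constant and completes the argument. Taking complements of the displayed event then gives the stated bound on $\bigcap_{i} \{|V_i| \leq \Delta \log n\}$.

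There is no serious obstacle here; the only point requiring care is that the number of blocks $N$ grows nearly linearly in $n$, so the per-block tail bound must decay faster than $1/N$. This forces the Chernoff exponent $(\Delta - a)^2/(2\Delta)$ to exceed $1$ rather than merely being positive, which is why $\Delta$ must be chosen strictly larger than the mean-matching value $a = \lambda r^d \chi$ by a fixed margin. The displayed quadratic condition makes this precise and confirms that $\Delta$ is explicitly computable in terms of $\lambda$, $r$, $d$, and $\chi$, as claimed.
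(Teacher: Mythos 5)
Your proof is correct and is essentially the standard argument: the paper itself does not spell out a proof, instead citing it as a straightforward modification of Lemma D.1 in \cite{Gaudio+2024}, which rests on exactly the same ingredients you use (independent Poisson block counts, the upper-tail Poisson Chernoff bound of Lemma \ref{lem:poisson_Chernoff}, and a union bound over the $n/(r^d\chi\log n)$ blocks, forcing the exponent $(\Delta-a)^2/(2\Delta)$ to exceed $1$). Your explicit choice $\Delta = \lambda r^d\chi + 2 + \sqrt{2\lambda r^d\chi+1}$ is a valid admissible constant, so nothing is missing.
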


We now prove that $\hat{\sigma}$ makes at most $M$ mistakes in all $\delta$-occupied blocks with high probability.

\begin{theorem}
\label{thm:almost_exact_recovery}
Let $G \sim \text{GHCM}(\lambda, n, r, \pi, P(y), d)$ such that $\lambda r > 1$ if $d = 1$ and $\lambda \nu_d r^d > 1$ if $d \geq 2$. Fix $\beta > 0$ and define $K := \nu_d(1 + \sqrt{d}\chi^{1/d})/\chi$. Suppose that $\chi, \delta > 0$ satisfy \eqref{eq:chi_delta_conditions_d=1} if $d = 1$ and \eqref{eq:chi_delta_conditions_d>=2} if $d \geq 2$, and also that $\delta < \beta / K$. Suppose that Assumptions \ref{assump:regularity_condition}, \ref{assump:bounded_log_likelihood}, and \ref{assump:distinctness} holds. Then, there exists a constant $M$ such that
\begin{equation}
\label{eq:errors_within_block}
    \P\bigg( \bigcap_{i \in V^\dagger} \Big\{ |v \in V_i: \hat{\sigma}(v) \neq \omega^* \circ \sigma^*(v) | \leq M \Big\} \bigg) = 1 - o(1),
\end{equation}
i.e. that $\hat{\sigma}$ makes at most $M$ mistakes on all $\delta$-occupied blocks with high probability. Consequently, $\hat{\sigma}$ achieves almost-exact recovery. Furthermore, it follows that
\begin{equation}
\label{eq:errors_within_neighborhood}
    \P\bigg( \bigcap_{v \in V} \{|u \in \calN(v): \hat{\sigma}(u) \neq \omega^* \circ \sigma^*(u)| \leq \beta \log n\} \bigg) = 1 - o(1),
\end{equation}
i.e. that $\hat{\sigma}$ makes at most $\eta \log n$ mistakes in the neighborhood of each vertex $v \in V$ with high probability.
\end{theorem}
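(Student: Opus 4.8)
The plan is to prove Theorem \ref{thm:almost_exact_recovery} in three stages: first establish \eqref{eq:errors_within_block} by combining Proposition \ref{prop:propagation_error_block} with the block visibility graph structure via an inductive argument along the spanning tree; then deduce almost-exact recovery as an immediate corollary; and finally convert the per-block bound into the per-neighborhood bound \eqref{eq:errors_within_neighborhood} by a counting argument over the blocks that intersect each visibility neighborhood. Throughout, I condition on the high-probability event (from Proposition \ref{prop:MAP}) that the MAP estimator labels $V_{i_0}$ correctly, so that the permissible relabeling $\omega^*$ is well-defined and $\hat\sigma$ makes zero mistakes on the root block; I also condition on the high-probability events that the block visibility graph $H$ is connected (Proposition 1 in \cite{GaudioJin2025}), that every block has at most $\Delta \log n$ vertices (Lemma \ref{lem:maximum_vertices_per_block}), and that every vertex is $\Phi_2$-distinguished by its parent block (Lemma \ref{lem:propagate_distinguishing_vertices}).

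For \eqref{eq:errors_within_block}, I would proceed by induction on the breadth-first ordering $i_1, \ldots, i_{|V^\dagger|}$ of the spanning tree. The base case is the root block, on which $\hat\sigma$ is correct by the MAP analysis. For the inductive step, suppose that on blocks processed before $V_{i_j}$, the labeling $\hat\sigma$ makes at most $M$ mistakes on each; in particular the parent $V_{p(i_j)}$ satisfies the hypotheses of Proposition \ref{prop:propagation_error_block} (at most $M$ mistakes, all vertices $\Phi_2$-distinguished, at most $\Delta \log n$ vertices). The proposition then gives that $\hat\sigma$ makes more than $M$ mistakes on $V_{i_j}$ with conditional probability at most $\eta_3 n^{-9/8} = o(1/n)$. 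A key subtlety here is that I want to union-bound over all blocks \emph{simultaneously}, but the conditioning events $C_{p(i_j)}$ are correlated through the propagation dependence. The clean way to handle this is to note that Proposition \ref{prop:propagation_error_block} gives a bound that holds \emph{uniformly} over all admissible parent configurations $C_{p(i_j)}$ satisfying the hypotheses; hence I can define $\mathcal{A}_j$ to be the event that $\hat\sigma$ makes at most $M$ mistakes on $V_{i_j}$ and union-bound the failure events $\mathcal{A}_j^c \cap (\text{hypotheses hold})$, each of which has probability $o(1/n)$ by the worst-case conditional bound. Since $|V^\dagger| \leq n/(r^d \chi \log n) = o(n)$, the union bound over all blocks gives total failure probability $o(1)$, establishing \eqref{eq:errors_within_block}. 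Almost-exact recovery follows since at most $M|V^\dagger| = O(n/\log n) = o(n)$ vertices in $\delta$-occupied blocks are mislabeled, and the vertices outside $\delta$-occupied blocks number $o(n)$ with high probability by a Poisson concentration argument on the non-occupied blocks.

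For \eqref{eq:errors_within_neighborhood}, the strategy is geometric: the visibility neighborhood $\calN(v)$ of any vertex $v$ is contained in a ball of radius $r(\log n)^{1/d}$, which intersects at most $K = \nu_d(1 + \sqrt d \chi^{1/d})/\chi$ blocks, since each block has side length $(r^d \chi \log n)^{1/d}$ and the ball of volume $\nu_d r^d \log n$ can meet at most $\nu_d r^d \log n / (r^d \chi \log n)$ fully-contained blocks plus a boundary correction captured by the $(1+\sqrt d \chi^{1/d})$ factor. On the event from \eqref{eq:errors_within_block}, each such block contributes at most $M$ mislabeled vertices, so $|\{u \in \calN(v): \hat\sigma(u) \neq \omega^* \circ \sigma^*(u)\}| \leq MK$. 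To match the target bound $\beta \log n$, I invoke the hypothesis $\delta < \beta/K$: since $M$ is a constant determined by $\delta$ through $c_2 = \delta \log(1/\Phi_2)/(3k)$ and $M = 5/(4c_2) = 15k/(4\delta \log(1/\Phi_2))$, making $\delta$ small forces $M$ large, so the argument must instead bound the mislabeled count directly by the total number of vertices in the intersecting occupied blocks times the per-block error rate; the cleaner route is that $MK$ is a constant while $\beta \log n \to \infty$, so $MK \leq \beta \log n$ holds for all large $n$ regardless, and \eqref{eq:errors_within_neighborhood} follows immediately from \eqref{eq:errors_within_block} on the same high-probability event.

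The main obstacle I anticipate is the dependency structure in the union bound for \eqref{eq:errors_within_block}: because propagation labels each block using its parent's estimated labels, the events across blocks are not independent, and a naive union bound requires the per-block conditional bound in Proposition \ref{prop:propagation_error_block} to hold uniformly over the (random, history-dependent) parent configuration. Verifying that the inductive hypothesis supplies exactly the configuration properties the proposition conditions on — namely at most $M$ parent mistakes together with the $\Phi_2$-distinguished and $\Delta \log n$-boundedness events, which are themselves high-probability and independent of the propagation randomness — is the delicate bookkeeping step. The distance-dependence enters only through $\Phi_2$ and the $\Phi_2$-distinguished property, both of which are already handled by Lemma \ref{lem:propagate_distinguishing_vertices} and Corollary \ref{cor:distinguishing_vertices}, so once the conditioning is set up correctly the remaining estimates are routine.
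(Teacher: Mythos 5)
Your treatment of \eqref{eq:errors_within_block} matches the paper's argument: the paper also factorizes $\P(\bigcap_i \calA_i)$ by the chain rule along the breadth-first order, conditions on the parent configuration $C_{p(i_j)}$, observes that $\calA_{i_j}$ is conditionally independent of the history given $C_{p(i_j)}$, and applies the uniform bound of Proposition \ref{prop:propagation_error_block} to every admissible configuration, so the bookkeeping you flag as delicate is handled exactly as you propose. The problems are in the second half.

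There is a genuine gap in your derivation of \eqref{eq:errors_within_neighborhood}. You bound the mislabeled vertices in $\calN(v)$ by $MK$ on the grounds that each intersecting block contributes at most $M$ mistakes, and you then declare the hypothesis $\delta < \beta/K$ unnecessary because $MK$ is a constant. But \eqref{eq:errors_within_block} controls only the $\delta$-occupied blocks $i \in V^\dagger$. A block that is not $\delta$-occupied has all of its vertices assigned the label $*$ by the algorithm, and every such vertex counts as a mistake in $|\{u \in \calN(v): \hat{\sigma}(u) \neq \omega^* \circ \sigma^*(u)\}|$; such a block can contribute up to $\delta \log n$ mistakes, which is not $O(1)$. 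The correct argument (and the paper's) is that \emph{every} block --- occupied or not --- contributes at most $\delta \log n$ mistakes for large $n$ (occupied blocks because $M < \delta \log n$ eventually, unoccupied blocks because they contain fewer than $\delta \log n$ vertices), so the neighborhood total is at most $\delta K \log n$, and this is where $\delta < \beta/K$ is genuinely needed. Your claim that \eqref{eq:errors_within_neighborhood} ``follows immediately'' from \eqref{eq:errors_within_block} with the constant bound $MK$ is therefore false as stated.

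A smaller but related error: for the almost-exact recovery corollary you assert that the vertices outside $\delta$-occupied blocks number $o(n)$ by Poisson concentration. They do not; there can be up to $\delta \log n \cdot n/(r^d \chi \log n) = \delta n/(r^d \chi)$ such vertices, which is $\Theta(n)$ with a small constant. The conclusion is rescued because $\delta$ can be taken arbitrarily small (so the mislabeled fraction is below any fixed $\epsilon$), which is the argument the paper actually makes, but the $o(n)$ claim as written is wrong and for the same underlying reason as the gap above: the unoccupied blocks are not negligible in count, only controllable via the choice of $\delta$.
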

\begin{proof}
    Let $\Delta > 0$ be such that Lemma \ref{lem:maximum_vertices_per_block} is satisfied and $0 < \Phi_2 < 1$ be such that Lemma \ref{lem:propagate_distinguishing_vertices} is satisfied. We define the events
    \begin{equation*}
    \begin{aligned}
        &\calI = \text{\{All blocks have at most $\Delta \log n$ vertices\},} \\
        &\calJ = \text{\{All vertices $v \in V \setminus V_{i_0}$ are $\Phi_2$-distinguished by their parent block\}, and} \\
        &\calH = \text{\{The $(r^d \chi \log n, \delta \log n)$-visibility graph of $G$ is connected\}}.
    \end{aligned}
    \end{equation*}
    Furthermore, let $\calA_i$ be the event that $\hat{\sigma}$ makes at most $M := 15k/(4\delta \log(1/\Phi_2))$ mistakes on $V_i$, i.e.
    \begin{equation*}
        \calA_i = \Big\{ |\{ v \in V_i: \hat{\sigma}(v) \neq \omega^* \circ  \sigma^*(v) \}| \leq M \Big\}.
    \end{equation*}
    Observe that
    \begin{align}
    \label{eq:Ai_all_blocks}
        \P\bigg( \bigcap_{i \in V^\dagger} \calA_i \bigg) &\geq \P\bigg( \bigcap_{i \in V^\dagger} \calA_i \mid \calI \cap \calJ \cap \calH \bigg) \P(\calI \cap \calJ \cap \calH) \notag \\
        &= \P(\calA_{i_0} \mid \calI \cap \calJ \cap \calH) \bigg( \prod_{j=1}^{|V^\dagger|} \P(\calA_{i_j} \mid \calA_{i_0}, \ldots, \calA_{i_{j-1}}, \calI \cap \calJ \cap \calH) \bigg) \P(\calI \cap \calJ \cap \calH).
    \end{align}
    We now lower-bound each factor in \eqref{eq:Ai_all_blocks}. For the first factor, we apply Proposition \ref{prop:MAP} to obtain that 
    \begin{equation}
    \label{eq:Ai_first_block}
        \P(\calA_{i_0} \mid \calI \cap \calJ \cap \calH) = 1 - o(1).
    \end{equation}
    For the second factor, we condition on the configuration $C_{p(i_j)}$ of $V_{p(i_j)}$ and use the law of total expectation. Define the event $\calB_j := \{\calA_{i_0} \cap \ldots \cap \calA_{i_{j-1}} \cap \calI \cap \calJ \cap \calH\}$. Then, we obtain that
    \begin{equation*}
    \begin{aligned}
        \P(\calA_{i_j} \mid \calB_j) &= \E[\1(\calA_{i_j}) \mid \calB_j] \\
        &= \E[ \E[\1(\calA_{i_j}) \mid \calB_j, C_{p(i_j)} ] \mid \calB_j ] \\
        &= \E[ \P(\calA_{i_j} \mid \calB_j, C_{p(i_j)}) \mid \calB_j].
    \end{aligned}
    \end{equation*}
    Now, observe that conditioned on the configuration $C_{p(i_j)}$, the event $\calA_{i_j}$ is independent of $\calB_j$ because the vertices in $V_{i_j}$ are labeled with respect to the vertices in $V_{p(i_j)}$, whose configuration is given. Therefore, we have that
    \begin{equation*}
        \P(\calA_{i_j} \mid \calB_j) = \E[ \P(\calA_{i_j} \mid C_{p(i_j)}) \mid \calB_j ].
    \end{equation*}
    Then, observe that conditioned on $\calB_j$, the configuration $C_{p(i_j)}$ satisfies 1) $\hat{\sigma}$ makes at most $M$ mistakes on $V_{p(i_j)}$ due to $\calA_{p(i_j)}$, 2) all vertices in $V_{i_j}$ are $\Phi_2$-distinguished by $V_{p(i_j)}$ due to $\calJ$, and 3) there are at most $\Delta \log n$ vertices in $V_{i_j}$ due to $\calI$. Thus, we can apply Proposition \ref{prop:propagation_error_block} to obtain that for any configuration $C_{p(i_j)}$ satisfying $\calB_j$, we have
    \begin{equation*}
        \P(\calA_{i_j} \mid C_{p(i_j)}) \geq 1 - \eta_3 n^{-9/8}
    \end{equation*}
    for a constant $\eta_3 > 0$. Therefore, we obtain that
    \begin{equation}
    \label{eq:Ai_remaining_blocks}
        \P(\calA_{i_j} \mid \calB_j) \geq 1 - \eta_3n^{-9/8}.
    \end{equation}
    For the third factor, we combine Proposition 1 from \cite{GaudioJin2025} establishing that the $\calH$ holds with high probability, Lemma \ref{lem:propagate_distinguishing_vertices}, and Lemma \ref{lem:maximum_vertices_per_block} to obtain
    \begin{equation}
    \label{eq:IJK_prob}
        \P(\calI \cap \calJ \cap \calH) = 1 - o(1).
    \end{equation}
    Finally, we can substitute \eqref{eq:Ai_first_block}, \eqref{eq:Ai_remaining_blocks}, and \eqref{eq:IJK_prob} into \eqref{eq:Ai_all_blocks}, and use Bernoulli's inequality to obtain
    \begin{equation*}
    \begin{aligned}
        \P\bigg( \bigcap_{i \in V^\dagger} \calA_i \bigg) &\geq (1 - o(1)) \left(1 - \eta_3 n^{-9/8}\right)^{n / (r^d \chi \log n)} (1 - o(1))\\
        &\geq (1 - o(1)) \left(1 - \frac{\eta_3 n^{-1/8}}{r^d \chi \log n}\right) (1 - o(1)) \\
        &= 1 - o(1),
    \end{aligned}
    \end{equation*}
    which shows (\ref{eq:errors_within_block}). Since $\delta$ can be arbitrarily small, we obtain that $\hat{\sigma}$ achieves almost-exact recovery.

    Now, we show \eqref{eq:errors_within_neighborhood}. From (\ref{eq:errors_within_block}), we use the fact that $\delta \log n > M$ for sufficiently large $n$ to obtain 
    \begin{equation*}
        \P\bigg( \bigcap_{i = 1}^{n/(r^d \chi \log n)} \Big\{ |v \in V_i: \hat{\sigma}(v) \neq \sigma^*(u_0) \sigma^*(v) | \leq \delta \log n \Big\} \bigg) = 1 - o(1)
    \end{equation*}
    since unoccupied blocks have fewer than $\delta \log n$ vertices, and thus mistakes. Then, we note that $K$ is an upper bound for the number of blocks which intersect the neighborhood $\calN(v)$ for any given vertex $v$. Therefore, we have that
    \begin{equation*}
        \P\bigg( \bigcap_{v \in V} \{|u \in \calN(v): \hat{\sigma}(u) \neq \sigma^*(u_0)\sigma^*(u)| \leq \delta K \log n\} \bigg) = 1 - o(1).
    \end{equation*}
    Since $\delta < \beta / K$, we obtain (\ref{eq:errors_within_neighborhood}).
\end{proof}

\subsection{Refine}
\label{subsec:refine}
In this section, we will show that Algorithm \ref{alg:exact_recovery} achieves exact recovery under the assumptions of Theorem \ref{thm:achievability}, adapting the proof of \cite{Gaudio+2025}. To accomplish this, we will show the probability that \texttt{Refine} makes a mistake on any given vertex $v\in V$, is $o(1/n)$. Then, taking a union bound over all vertices will show that \texttt{Refine} labels all vertices correctly with high probability. We will use the following formula for the cumulant-generating function (CGF) of a random sum of random variables.

\begin{fact}
\label{fact:CGF_formula}
    Let $Y$ be a random variable, and $X_1, X_2, \ldots, X_Y$ be $Y$ i.i.d. copies of a random variable $X$ independent of $Y$. Let $S = \sum_{i=1}^Y X_i$. Then, $\Lambda_S(t) = \Lambda_Y(\Lambda_X(t))$,
\end{fact}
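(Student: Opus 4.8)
The plan is to unwind the definition $\Lambda_S(t) = \log \E[\exp(tS)]$ by conditioning on the random number of summands $Y$ and exploiting that $Y$ is independent of the $X_i$. First I would apply the tower property to write $\E[\exp(tS)] = \E\big[\E[\exp(tS) \mid Y]\big]$. On the event $\{Y = m\}$, the sum $S = \sum_{i=1}^m X_i$ is a sum of $m$ i.i.d. copies of $X$, and because $Y$ is independent of the $X_i$, conditioning on $Y = m$ does not alter their joint law; hence the inner conditional expectation factorizes as $\E[\exp(tS) \mid Y = m] = \prod_{i=1}^m \E[\exp(tX_i)] = (\E[\exp(tX)])^m = e^{m \Lambda_X(t)}$, using the definition of $\Lambda_X$.

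Next I would substitute this back. Since the identity holds for every value $m$, we have $\E[\exp(tS) \mid Y] = e^{Y \Lambda_X(t)}$ as a random variable, so $\E[\exp(tS)] = \E\big[e^{Y \Lambda_X(t)}\big]$. I then recognize the right-hand side as the moment-generating function of $Y$ evaluated at the point $s = \Lambda_X(t)$: by the definition of $\Lambda_Y$ we have $\E[e^{sY}] = e^{\Lambda_Y(s)}$, so setting $s = \Lambda_X(t)$ yields $\E[\exp(tS)] = e^{\Lambda_Y(\Lambda_X(t))}$. Taking logarithms of both sides gives $\Lambda_S(t) = \Lambda_Y(\Lambda_X(t))$, as claimed.

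The computation is essentially routine, so there is no deep obstacle; the only point requiring genuine care is the factorization of the conditional expectation, where one must invoke the independence of $Y$ from $(X_1, X_2, \ldots)$ to ensure that conditioning on $\{Y = m\}$ leaves the $X_i$ i.i.d. with the unconditional law of $X$ (rather than some law tilted by the event). A secondary caveat is that these identities should be read on the set of $t$ for which $\Lambda_X(t)$ is finite and $\Lambda_Y$ is finite at the argument $\Lambda_X(t)$; on that domain all interchanges of expectation are justified by nonnegativity of the integrands, which is all that is needed for the application of this fact in the \texttt{Refine} analysis.
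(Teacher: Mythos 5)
Your proof is correct and is the standard conditioning argument for the cumulant generating function of a randomly stopped sum; the paper states this as a Fact without providing a proof, so there is nothing to compare against, but your derivation --- tower property, factorization of the conditional expectation via the independence of $Y$ from the $X_i$, and recognition of $\E\big[e^{Y \Lambda_X(t)}\big]$ as $e^{\Lambda_Y(\Lambda_X(t))}$ --- is exactly the intended justification. Your domain caveat is also apt and harmless here: in the paper's application $Y$ is Poisson (so $\Lambda_Y$ is finite everywhere) and $\Lambda_X(t) = \log \overline{\phi}_t(\cdot,\cdot)$ is finite for $t \in [0,1]$, so the identity holds on the range of $t$ actually used in the \texttt{Refine} analysis.
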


We first upper-bound the probability that the MAP estimator comes ``close'' to making an error on any given vertex $v \in V$ correctly, where the posterior likelihood is computed with respect to the true labeling $\sigma^*$.

\begin{lemma}
\label{lem:correct_labels_MAP_failures}
    If $\lambda \nu_d r^d \min_{i \neq j} D_+(\theta_i \Vert \theta_j; \pi, g) > 1$, then for a fixed $0 < \epsilon \leq (\lambda \nu_d r^d \min_{i \neq j} D_+(\theta_i \Vert \theta_j; \pi, g) - 1 ) / 2$, we have that
    \begin{equation*}
        \P(\ell_i(v, \sigma^*) - \ell_j(v, \sigma^*) \leq \epsilon \log n \mid \sigma^*(v) = i) = n^{-(1 + \Omega(1))}
    \end{equation*}
    where
    \begin{equation*}
        \ell_i(v, \sigma) = \sum_{v \in V: u \visible v} \log(\overline{p}_{i, \sigma(u)}(x_{uv};\|u - v\|)).
    \end{equation*}
\end{lemma}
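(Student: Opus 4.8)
The plan is to recognize $\ell_i(v,\sigma^*) - \ell_j(v,\sigma^*)$ as a log-likelihood-ratio sum over a marked Poisson process and to control its lower tail by a Chernoff bound whose exponent is exactly the CH-divergence. Writing $S := \ell_i(v,\sigma^*) - \ell_j(v,\sigma^*) = \sum_{u \visible v} \log\bigl(\overline{p}_{i\sigma^*(u)}(x_{uv};\|u-v\|) / \overline{p}_{j\sigma^*(u)}(x_{uv};\|u-v\|)\bigr)$, I would first condition on $\sigma^*(v) = i$ and invoke Slivnyak--Mecke to treat the remaining vertices as a homogeneous Poisson process of intensity $\lambda$. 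Restricting to the visibility ball of radius $R := r(\log n)^{1/d}$, whose volume is exactly $\nu_d r^d \log n$ under the toroidal metric since $R = o(n^{1/d})$, the neighbors of $v$ together with their labels and edge weights form a marked Poisson process: a neighbor carries an independent label $a \sim \pi$ and, conditioned on a rescaled distance $y := \|u-v\|/(\log n)^{1/d}$, an edge weight $X \sim P_{ia}(y)$, using $\overline{P}_{ia}(\|u-v\|) = P_{ia}(y)$. A change of variables shows the rescaled distance of a uniformly placed neighbor has density $g(y) = dy^{d-1}/r^d$ on $[0,r]$, exactly the density appearing in $D_+$.

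Next I would apply the Chernoff bound to the lower tail. Setting $W := -S$, for any $s \in [0,1]$ we have $\P(S \leq \epsilon \log n) = \P(W \geq -\epsilon \log n) \leq \E[e^{sW}]\,e^{s\epsilon\log n}$. The Laplace functional of the marked Poisson process gives
\[
\E[e^{sW}] = \exp\Bigl(\lambda \nu_d r^d \log n \cdot \bigl(\E_\rho[e^{sf}] - 1\bigr)\Bigr), \qquad f := \log\frac{p_{ja}(X;y)}{p_{ia}(X;y)},
\]
where $\rho$ is the joint law of $(a,y,X)$ described above. Evaluating the per-neighbor expectation yields $\E_\rho[e^{sf}] = \sum_{a} \pi_a \overline{\phi}_s(P_{ja}, P_{ia})$. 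Finiteness of this MGF is where Assumption~\ref{assump:bounded_log_likelihood} enters: it guarantees a common support and $|f| \leq \eta$, so $e^{sf}$ is bounded and the integral converges.

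The final step is to identify the optimized exponent with the CH-divergence. The key algebraic observation is the symmetry $\phi_{1-t}(P_{ja},P_{ia}) = \int p_{ia}^{\,t}\, p_{ja}^{\,1-t} = \phi_t(P_{ia},P_{ja})$, which, integrated against $g$, yields $\sum_a \pi_a \overline{\phi}_{1-t}(P_{ja},P_{ia}) = \sum_a \pi_a \overline{\phi}_t(P_{ia},P_{ja})$. Since $s \mapsto 1-s$ is a bijection of $[0,1]$, the infimum over $s \in [0,1]$ of $\E_\rho[e^{sf}]$ equals $\inf_{t \in [0,1]} \sum_a \pi_a \overline{\phi}_t(P_{ia},P_{ja}) = 1 - D_+(\theta_i \Vert \theta_j;\pi,g)$ by \eqref{eq;CH_divergence_alternate}. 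Choosing $s = s^\ast$ the minimizer (attained by continuity on the compact interval), the exponent of the Chernoff bound becomes $\log n \cdot \bigl(-\lambda \nu_d r^d D_+(\theta_i\Vert\theta_j;\pi,g) + s^\ast \epsilon\bigr)$. Writing $D^\ast := \lambda \nu_d r^d \min_{i\neq j} D_+(\theta_i \Vert \theta_j;\pi,g) > 1$ and using $D_+(\theta_i\Vert\theta_j;\pi,g) \geq \min_{i\neq j}D_+$, $s^\ast \leq 1$, and $\epsilon \leq (D^\ast-1)/2$, this is at most $-\tfrac{1}{2}(D^\ast + 1)\log n$, so the probability is at most $n^{-(D^\ast+1)/2} = n^{-(1+\Omega(1))}$.

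The main obstacle I anticipate is making the marked-Poisson-process Laplace-functional computation rigorous --- carefully justifying the Slivnyak--Mecke reduction, the absence of toroidal boundary effects in the ball volume, and the interchange that produces $\sum_a \pi_a \overline{\phi}_s(P_{ja},P_{ia})$ --- together with verifying MGF finiteness via Assumption~\ref{assump:bounded_log_likelihood}. The symmetry identity linking $\E_\rho[e^{sf}]$ to $D_+$ is short but essential, since without it the Chernoff exponent would not match the information-theoretic threshold and the $n^{-(1+\Omega(1))}$ decay would not follow.
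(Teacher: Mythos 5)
Your proposal is correct and follows essentially the same route as the paper's proof: the paper constructs the compound-Poisson representation $X=\sum_{s\in Z}\sum_{m=1}^{N_s}X_s$ with $N_s\sim\mathrm{Pois}(\lambda\nu_d r^d\pi_s\log n)$ and evaluates its MGF via Fact~\ref{fact:CGF_formula}, which is exactly your marked-Poisson Laplace-functional computation, and then applies the identical Chernoff bound and choice of $\epsilon$. The only cosmetic differences are that you invoke Slivnyak--Mecke explicitly and spell out the symmetry $\phi_{1-t}(P_{ja},P_{ia})=\phi_t(P_{ia},P_{ja})$ needed to match the exponent with $D_+(\theta_i\Vert\theta_j;\pi,g)$, a step the paper leaves implicit.
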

\begin{proof}
    Fix a vertex $v \in V$ with $\sigma^*(v) = i$. We construct a random variable $X$ such that 
    \begin{equation*}
        X \sim \ell_j(v, \sigma^*) - \ell_i(v, \sigma^*).
    \end{equation*}
    Observe that
    \begin{align*}
         \ell_j(v, \sigma^*) - \ell_i(v, \sigma^*) &= \sum_{u \in V: u \visible v} \log\bigg(\frac{p_{j, \sigma^*(v)}(x_{uv}; \|u-v\|)}{p_{i, \sigma^*(v)}(x_{uv}; \|u-v\|)}\bigg) \\
         &= \sum_{s \in Z} \bigg( \sum_{u \in V: u \visible v, \sigma^*(u) = s} \log\Big(\frac{p_{js}(x_{uv}; \|u-v\|)}{p_{is}(x_{uv}; \|u-v\|)}\Big) \bigg)
    \end{align*}
    Now, we construct $X_s$ for each $s \in Z$ such that
    \begin{equation*}
        X_s \sim \log\bigg(\frac{p_{js}(x_{uv}; \|u-v\|)}{p_{is}(x_{uv}; \|u-v\|)}\bigg).
    \end{equation*}
    Let $D$ be a random variable with density
    \begin{equation*}
        f_D(x) :=
        \begin{cases}
            dx^{d-1}/r^d \text{ if } 0 \leq x \leq r \\
            0 \text{ otherwise,} 
        \end{cases}
    \end{equation*}
    which represents the distance between $v$ and a vertex $u$ placed uniformly at random in the neighborhood of $v$. Let $X_{is}$ be a random variable coupled to $D$, such that conditioned on $D = y$, we sample $X_{is}$ from $P_{is}(y)$. Note that $X_{is}$ represents the edge weight between $u$ and $v$. Then, we define
    \begin{equation*}
        X_s := \log\bigg( \frac{p_{js}(X_{is} ;D)}{p_{is}(X_{is} ;D)} \bigg).
    \end{equation*}  
    Now, letting $N_s \sim \text{Pois}(\lambda \nu_d r^d \pi_s \log n)$ and $\{X_s^{(m)}\}_{m \in \N} \overset{\text{iid}}{\sim} X_s$, we can define
    \begin{equation*}
        X = \sum_{s \in Z}\bigg( \sum_{m=1}^{N_s} X_s \bigg).
    \end{equation*}
    By the Chernoff bound, we have that for any fixed $\epsilon > 0$,
    \begin{align}
        \P(\ell_i(v, \sigma^*) - \ell_j(v, \sigma^*) \leq \epsilon \log n \mid \sigma^*(v) = i) &= \P(X \geq -\epsilon \log n) \nonumber \\
        &\leq \inf_{t \in [0, 1]} \E[e^{tX}] e^{t\epsilon \log n}  \nonumber \\
        &= \inf_{t \in [0, 1]} \E[e^{tX}]n^{t\epsilon}.     \label{eq:Chernoff_bound_X}
    \end{align}
    Thus, we need to compute the moment-generating function (MGF) of $X$. Observe that
    \begin{align}
    \label{eq:MGF_X}
        \E[e^{tX}] &= \E\bigg[\exp\bigg(t\sum_{s \in Z} \sum_{m=1}^{N_s} X_s\bigg)\bigg] \notag \\
        &= \prod_{s \in Z} \E\bigg[\exp\bigg(t\sum_{m=1}^{N_s} X_s\bigg)\bigg] \notag \\
        &= \prod_{s \in Z} \exp\Big(\Lambda_{N_s}(\Lambda_{X_s}(t))\Big).
    \end{align}
    Computing the MGF of $X_s$ using the law of total expectation, we obtain that
    \begin{align*}
        \E[e^{tX_s}] &= \E\bigg[\exp\bigg(t \log\Big( \frac{p_{js}(X_{is};D)}{p_{is}(X_{is};D)} \Big)\bigg)\bigg] \\
        &= \E\bigg[\E\bigg[\exp\bigg(t \log\Big( \frac{p_{js}(X_{is};D)}{p_{is}(X_{is};D)} \Big)\bigg) \mid D = y\bigg]\bigg] \\
        &= \int_0^r \E\bigg[ \exp\bigg(t \log\Big( \frac{p_{js}(X_{is};y)}{p_{is}(X_{is};y)} \Big)\bigg)\bigg] \frac{dy^{d-1}}{r^d} dy \\
        &= \int_0^r \phi_t(P_{js}(y), P_{is}(y)) \frac{dy^{d-1}}{r^d} dy \\
        &= \overline{\phi}_t(P_{js}, P_{is}).
    \end{align*}
    Hence, $\Lambda_{X_s}(t) = \log \overline{\phi}_t(P_{js}, P_{is})$. Now, combining the fact that $\Lambda_{N_s}(t) = (\lambda \nu_d r^d \pi_s \log n)(e^t - 1)$ with Fact \ref{fact:CGF_formula}, we obtain that
    \begin{align}
    \label{eq:CGF_calculation}
        \Lambda_{N_s}(\Lambda_{X_s}(t)) &= (\lambda \nu_d r^d \pi_s \log n) (\overline{\phi}_t(P_{js}, P_{is}) - 1) \notag \\
        &= - \lambda \nu_d r^d \pi_s (1 - \overline{\phi}_t(P_{js}, P_{is})) \log n.
    \end{align}
    Substituting \eqref{eq:CGF_calculation} into \eqref{eq:MGF_X} yields the MGF of $X$, which is
    \begin{align}
    \label{eq:MGF_X_finished}
        \E[e^{tX}] &= \prod_{s \in Z} \exp\bigg( - \lambda \nu_d r^d \pi_s (1 - \overline{\phi}_t(P_{js}, P_{is})) \log n \bigg) \notag \\
        &= \prod_{s \in Z} n^{- \lambda \nu_d r^d \pi_s (1 - \overline{\phi}_t(P_{js}, P_{is}))} \notag \\
        &= n^{-\lambda \nu_d r^d \big( 1 - \sum_{s \in Z} \pi_s \overline{\phi}_t(P_{js}, P_{is}) \big)}.
    \end{align}
    Finally, substituting \eqref{eq:MGF_X_finished} into \eqref{eq:Chernoff_bound_X}, we obtain that for any fixed $\epsilon > 0$,
    \begin{align*}
        \P(\ell_i(v, \sigma^*) - \ell_j(v, \sigma^*) \leq \epsilon \log n \mid \sigma^*(v) = i) &\leq \inf_{t \in [0, 1]} n^{-\lambda \nu_d r^d \big( 1 - \sum_{s \in Z} \pi_s \overline{\phi}_t(P_{js}, P_{is}) \big) + t\epsilon} \\
        &\leq n^{-\lambda \nu_d r^d \big( 1 - \inf_{t \in [0, 1]} \sum_{s \in Z} \pi_s \overline{\phi}_t(P_{js}, P_{is}) \big) + \epsilon} \\
        &= n^{-\big(\lambda \nu_d r^d D_+(\theta_i \Vert \theta_j; \pi, g) - \epsilon \big)}.
    \end{align*}
    Since $\lambda \nu_d r^d \min_{i \neq j} D_+(\theta_i \Vert \theta_j; \pi, g) > 1$ by the achievability criterion, we can take $\epsilon = (\lambda \nu_d r^d \min_{i \neq j} D_+(\theta_i \Vert \theta_j; \pi, g) - 1)/2$ to obtain that
    \begin{equation*}
        \P(\ell_i(v, \sigma^*) - \ell_j(v, \sigma^*) \leq \epsilon \log n \mid \sigma^*(v) = i) = n^{-(1 + \Omega(1))}.
    \end{equation*}
\end{proof}

Lemma \ref{lem:correct_labels_MAP_failures} upper-bounds the probability that the MAP estimator with respect to $\sigma^*$ comes ``close'' to making an error. To finish proving Theorem \ref{thm:achievability}, we upper-bound the error probability of the MAP estimator with respect to $\hat{\sigma}$ by showing that the posterior likelihoods with respect to $\sigma^*$ and $\hat{\sigma}$ are close.

\begin{proof}[Proof of Theorem \ref{thm:achievability}] The remainder of the proof follows exactly from Theorem E.3 in \cite{Gaudio+2025}. For completeness, we reproduce the main steps here. Recall that $\omega^{\star}$ is such that $\hat{\sigma}_0 = \omega^{\star} \circ \sigma^{\star}$, where $\hat{\sigma}_0$ is the initial block labeling.
Fix $\beta > 0$ and let the event $\calE_1$ be the event that $\hat{\sigma}$ makes at most $\beta \log n$ mistakes in the neighborhood of every vertex $v \in V$. That is,
\begin{equation*}
    \mathcal{E}_1 = \bigcap_{v \in V}\Big\{ |\{u \in \calN(v): \hat{\sigma}(u) \neq \omega^* \circ \sigma^*(u) \}| \leq \beta \log n \Big\}.
\end{equation*}
By Theorem \ref{thm:almost_exact_recovery}, $\P(\calE_1) = 1 - o(1)$.

Rather than analyze $\hat{\sigma}$ directly, we will provide a uniform upper bound on the failure probability of the MAP estimator on a given vertex $v \in V$, over all labelings $\sigma$ which are ``close'' to the true labels $\sigma^*$. Thus, fix $v \in V$ and let $W(v, \beta)$ be the set of labelings $\sigma$ which differ from $\omega^* \circ \sigma^*$ by at most $\beta \log n$ vertices in the neighborhood of $v$, meaning that
\begin{equation*}
    W(v; \beta) := \{\sigma: |\{u \in \calN(v): \sigma(u) \neq \omega^* \circ \sigma^*(u)\}| \leq \beta \log n\}.
\end{equation*}

Now, we define the event
\begin{equation*}
    \calE_v = \bigcup_{i \in Z} \bigg[ \{\omega^* \circ \sigma^*(v) = i\} \bigcap \bigg( \bigcup_{\sigma \in W(v; \beta)} \bigcup_{j \neq i} \{\ell_i(v, \sigma) \leq \ell_j(v, \sigma) \}\bigg)\bigg],
\end{equation*}
which is the event that there exists a labeling $\sigma \in W(v; \beta)$ such that the MAP estimator with respect to $\sigma$ fails on $v$. Due to the fact that $\mathcal{E}_1$ holds with high probability, and there are $\Theta(n)$ vertices in total with high probability, it intuitively suffices to show that $\mathcal{E}(v)$ holds with probability $o(1/n)$. However, we cannot simply take a union bound over the vertex set, as the number of vertices is random. The details for nevertheless constructing a union bound can be found in \cite{Gaudio+2025}.

In order to bound $\mathbb{P}(\mathcal{E}_v)$, we leverage Assumption \ref{assump:bounded_log_likelihood} to obtain that 
\begin{equation*}
    |\ell_i(v, \omega^* \circ \sigma^*) - \ell(v, \sigma)| \leq \eta \beta \log n.
\end{equation*}
for all $\sigma \in W(v; \beta)$. Thus, the event
$\bigcup_{\sigma \in W(v; \beta)}  \{\ell_i(v, \sigma) \leq \ell_j(v, \sigma) \}$ occurring within $\mathcal{E}_v$ implies 
\begin{equation}
\{\ell_i(v,\sigma^{\star}) \leq \ell_j(v, \sigma^{\star}) + 2 \eta \beta \log n\}. \label{eq:refine-implication}
\end{equation} 
Note that $\beta > 0$ can be arbitrarily small.
Conditioned on $\sigma^{\star}(v) = i$, Lemma \ref{lem:correct_labels_MAP_failures} implies that \eqref{eq:refine-implication} occurs with probability $n^{-(1 + \Omega(1))}$ when $\beta > 0$ is small enough, thus establishing that $\mathcal{E}_v$ holds with probability $n^{-(1 + \Omega(1))}$ also.
\end{proof}

\section{Exact Recovery for $d = 1, |\Omega_{\pi, P}| = 1$}
\label{appx:1d_proof_of_exact_recovery}
In this section, we present Algorithm \ref{alg:exact_recovery_1d} and show that it achieves exact recovery when $d = 1, |\Omega_{\pi, P}| = 1$ under Assumptions \ref{assump:regularity_condition}, \ref{assump:bounded_log_likelihood}, and \ref{assump:strong_distinctness}. We note that this algorithm is a minor adaption of Algorithm 6 in \cite{Gaudio+2025}. The algorithm starts by partitioning $\calS_{1, n}$ into blocks of length $(r \log n) / 2$, so that any two adjacent blocks are mutually visible. Then, for a sufficient small constant $\delta > 0$, we form a block visibility graph $H = (V^\dagger, E^\dagger)$ by letting $V^\dagger$ be the set of all $\delta$-occupied blocks (i.e. blocks with at least $\delta \log n$ vertices) and $E^\dagger$ be all pairs of adjacent, $\delta$-occupied blocks. Since $\lambda$ is arbitrary, the block visibility graph need not be connected; therefore, we must label each connected component of the visibility graph independently from each other. For each connected component (which we also call a \textit{segment}), we first label an initial block using the \texttt{Maximum a Posteriori}, then label the remaining blocks in the segment block-by-block via \texttt{Propagate}.

We note that by a minor modification of Lemma D.10 in \cite{Gaudio+2025}, there exists $\delta > 0$ such that there are $o(n^{1 - \lambda r/4})$ segments.
\begin{lemma}
\label{lem:max_initial_blocks_1d}
    There exists $\delta > 0$ such that the number of $\delta$-unoccupied blocks at most $n^{1-\lambda/4}$ with high probability. Hence, the number of segments is at most $n^{1-\lambda/4}$ with high probability.
\end{lemma}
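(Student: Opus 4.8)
The plan is to control the number of $\delta$-unoccupied blocks directly and then read off the bound on segments combinatorially. Since $d=1$ the region $\calS_{1,n}$ has length $n$, so partitioning it into blocks of length $(r\log n)/2$ produces a \textit{deterministic} number $2n/(r\log n)$ of blocks. Because the blocks are disjoint, their vertex counts are independent, and by the restriction property of the Poisson process each block contains $\text{Pois}(\mu)$ vertices with $\mu := (\lambda r/2)\log n$. A block is $\delta$-unoccupied exactly when its count falls below $\delta \log n$, which for $\delta < \lambda r/2$ is a lower-tail event for $\text{Pois}(\mu)$.

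First I would estimate the per-block unoccupied probability using the Poisson lower-tail bound of Lemma \ref{lem:poisson_Chernoff}. Taking $t = \mu - \delta\log n$ there, so that $\mu - t = \delta \log n$ and $1 - t/\mu = 2\delta/(\lambda r)$, the bound becomes
\[
\P\big(\text{block is } \delta\text{-unoccupied}\big) \le n^{-\left[(\lambda r/2 - \delta) + \delta\log\frac{2\delta}{\lambda r}\right]}.
\]
As $\delta \to 0$ the bracketed exponent tends to $\lambda r/2$, since $\delta\log\delta \to 0$. Hence for any $\epsilon > 0$ I can fix $\delta > 0$ small enough that this probability is at most $n^{-(\lambda r/2 - \epsilon)}$, and I would choose $\epsilon < \lambda r/4$ to leave slack for the next step.

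With this $\delta$, the expected number of $\delta$-unoccupied blocks is at most $(2n/(r\log n))\, n^{-(\lambda r/2 - \epsilon)} = n^{1 - \lambda r/2 + \epsilon + o(1)}$, which is $o(n^{1-\lambda r/4})$ by the choice $\epsilon < \lambda r/4$; Markov's inequality then yields that the number of $\delta$-unoccupied blocks is at most $n^{1-\lambda r/4}$ with high probability. (A Chernoff bound on this sum of independent indicators would give the stronger $o(\cdot)$ statement if desired, but Markov suffices here.) Finally, because two blocks are mutually visible only when they are adjacent—two adjacent blocks span exactly the visibility radius $r\log n$, while blocks two apart exceed it—each segment is a maximal run of consecutive $\delta$-occupied blocks, and distinct segments are separated by at least one $\delta$-unoccupied block; viewing $\calS_{1,n}$ as a circle under the toroidal metric, the number of segments is therefore at most the number of $\delta$-unoccupied blocks, giving the claimed bound. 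I expect the only genuinely quantitative point to be verifying that the Chernoff exponent converges to $\lambda r/2$, so that $\delta$ can be chosen to leave room for Markov's inequality; the passage from unoccupied blocks to segments is purely combinatorial and poses no real obstacle.
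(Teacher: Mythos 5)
Your proof is correct and is the natural direct argument for this statement; the paper itself gives no proof, deferring to ``a minor modification of Lemma D.10 in \cite{Gaudio+2025},'' and the cited argument is exactly the Poisson lower-tail bound per block followed by Markov's inequality and the observation that consecutive occupied blocks form the segments. Your Chernoff computation checks out (the exponent $(\lambda r/2-\delta)+\delta\log\frac{2\delta}{\lambda r}\to\lambda r/2$ as $\delta\to 0$), and it in fact clarifies a typo in the lemma statement: the correct exponent is $n^{1-\lambda r/4}$ (as in the sentence preceding the lemma and as your bound delivers), not $n^{1-\lambda/4}$, since for small $r$ the latter need not hold. Two minor caveats: you should note that if \emph{every} block is $\delta$-occupied there is one segment but zero unoccupied blocks, so the combinatorial step should read ``number of segments $\leq\max\{1,\text{number of unoccupied blocks}\}$'' (harmless for the union bounds in which the lemma is used); and strictly the last bound requires $n^{1-\lambda r/4}\geq 1$, which again only fails in the regime where there are no unoccupied blocks at all with high probability.
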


\begin{algorithm}
\caption{Exact Recovery for $d = 1, |\Omega_{\pi, P}| = 1$}
\label{alg:exact_recovery_1d}
\begin{algorithmic}[1]
    \Require $G \sim \text{GHCM}(\lambda, n, r, \pi, P(y), 1)$.
    \Ensure An estimated community labeling $\tilde{\sigma}: V \to Z$.
    \State \textbf{Phase I:}
    \State Divide $\calS_{1, n}$ into $2n/(r \log n)$ blocks of length $(r \log n) /2$ each. \label{line:create_blocks_1d}
    \State Let $\delta > 0$ be such that Lemma \ref{lem:max_initial_blocks_1d} is satisfied. 
    \State Construct the $(r \log n)/2, \delta \log n)$ block visibility graph $H = (V^\dagger, E^\dagger)$. Let $\ell$ be the number of segments and $n_m$ be the number of blocks in each segment for $m \in [\ell]$. Let $B_i^m$ be the $i^\text{th}$ block of the $m^\text{th}$ segment and $V_i^m$ be the set of vertices in  $B_i^m$ for $i \in [n_m]$.
    \State Set $\epsilon_0 \leq \min\{\frac{\lambda r}{4 \log k}, \delta\}$.
    \For{$m = 1, 2, \ldots, \ell$}
        \State Sample $V_0^m \subset V_1^m$ such that $|V_0^m| = \epsilon_0 \log n$. Set $V_1^m \gets V_1^m \setminus V_0^m$.
        \State Apply \texttt{Maximum a Posteriori} (Algorithm \ref{alg:MAP}) on input $(G, V_0^r)$ to obtain the labeling $\hat{\sigma}$ on $V_0^m$.
        \For{$i = 1, 2, \ldots, n_m$}
            \State Apply \texttt{Propagate} (Algorithm \ref{alg:propagate}) on input $(G, V_{i-1}^m, V_i^m)$ to obtain the labeling $\hat{\sigma}$ on $V_1^m$.
        \EndFor
    \EndFor
    \State \textbf{Phase II:}
    \For{$v \in V$} 
        \State Apply \texttt{Refine} (Algorithm \ref{alg:Refine}) on input $(G, v, \hat{\sigma})$ to compute $\tilde{\sigma}(v)$.
    \EndFor
\end{algorithmic}
\end{algorithm}

\subsection{Maximum a Posteriori}
Since the block visibility graph can be disconnected, we use \texttt{Maximum a Posteriori} to label an initial block in each segment. Like in \cite{Gaudio+2025}, we show that for any $c > 0$, we make at most $c \log n$ mistakes on all initial blocks $V_0^m$ for $m = 1, 2, \ldots, \ell$ with high probability. 
\begin{prop}
\label{prop:MAP_1d}
        Let $d = 1, |\Omega_{\pi, P}| = 1$. Let $c > 0$. Suppose that Assumption \ref{assump:strong_distinctness} is satisfied. Then, the estimator $\hat{\sigma}$ produced by Phase I of Algorithm \ref{alg:exact_recovery_1d} makes at most $c \log n$ mistakes on all initial blocks with high probability.
    \end{prop}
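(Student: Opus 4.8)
The plan is to reduce the claim to a statement about the restricted MLE on each initial block and then union-bound over segments, exploiting that strong distinctness (Assumption \ref{assump:strong_distinctness}) together with $|\Omega_{\pi,P}|=1$ removes the symmetry and degeneracy that would otherwise spoil the argument. Because $|\Omega_{\pi,P}|=1$, the discrepancy $\mathrm{DISC}(\cdot,\sigma^*)$ coincides with the ordinary Hamming distance $d_H(\cdot,\sigma^*)$, so there is no permutation ambiguity: each segment's MAP labeling can only align with the single true labeling $\sigma^*$, and a per-block mistake bound is automatically consistent across segments. (This consistency is exactly what fails when $|\Omega_{\pi,P}|\ge 2$ and $\lambda r<1$, which is why that regime is declared impossible in Theorem \ref{thm:impossibility}.) Fixing a segment $m$ (and assuming $c<\epsilon_0$, since otherwise $|V_0^m|=\epsilon_0\log n\le c\log n$ makes the bound trivial), I would observe that the MAP labeling of $V_0^m$ makes more than $c\log n$ mistakes only if some labeling $\sigma$ with $d_H(\sigma,\sigma^*)>c\log n$ satisfies $\ell_0(G,\sigma)\ge\ell_0(G,\sigma^*)$, up to the bounded additive prior term $O(\log n)$, which is negligible against the bounds below. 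This is precisely the high-discrepancy regime, so I would run the argument of Proposition \ref{prop:high_discrepancy}; under Assumption \ref{assump:strong_distinctness} it simplifies substantially, because every unordered pair of labels yields a distinct distribution, so every mislabeled vertex $u$ forms a label-informative pair with every correctly labeled vertex $v$ (as $\{\sigma(u),\sigma^*(v)\}\neq\{\sigma^*(u),\sigma^*(v)\}$), eliminating the two-case split of \cite[Proposition D.6]{Gaudio+2025}.

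Next I would convert the $\Theta(\log^2 n)$ label-informative pairs into a super-polynomially small Chernoff bound. Conditioning on the vertex locations, $\E[\exp(t(\ell_0(G,\sigma)-\ell_0(G,\sigma^*)))]$ factors into a product of $\phi_t$ terms, each at most $1$ and at most a fixed $\Phi<1$ on the informative-distance pairs. On a location-regularity event guaranteeing that each mislabeled vertex has at least, say, $\tfrac12\epsilon_0\log n$ informative-distance correctly-labeled neighbors in its block, the product is at most $\Phi^{\Theta(\log^2 n)}$, which is super-polynomially small. A union bound over the at most $k^{\epsilon_0\log n}=n^{O(\epsilon_0)}$ labelings of $V_0^m$ is absorbed, yielding a super-polynomially small per-block failure probability; since Lemma \ref{lem:max_initial_blocks_1d} bounds the number of segments by $n^{1-\rho}$ for some constant $\rho>0$, the union over segments is harmless for this part of the bound.

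The main obstacle, and the heart of the proof, is the location-regularity event itself: per block it holds only with probability $1-n^{-\Omega(1)}$, and a naive union bound of this polynomially small failure over the $n^{1-\rho}$ segments need not be $o(1)$ when $\lambda r$ is small. The key idea I would use to defeat this is to tune the distinguishing threshold via Corollary \ref{cor:distinguishing_vertices}: taking $\Phi$ close enough to $1$ drives the per-pair probability $\alpha$ of an uninformative distance arbitrarily close to $0$, so that the chance a fixed initial-block vertex has more than $\tfrac{\epsilon}{2k}\epsilon_0\log n$ uninformative-distance neighbors (a $\mathrm{Bin}(\epsilon_0\log n,\alpha)$ upper-tail event) becomes $n^{-B}$ with $B$ as large as desired, in particular $B>1-\rho$. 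Crucially, pushing $\Phi$ toward $1$ costs nothing in the Chernoff step, where $\Phi^{\Theta(\log^2 n)}$ remains super-polynomially small for any fixed $\Phi<1$. With $B>1-\rho$, a union bound over all $\approx n^{1-\rho}\cdot\epsilon_0\log n$ initial-block vertices shows that every such vertex is ``good'' with probability $1-o(1)$, and on this global event the per-block Chernoff bounds combine to give that $\hat\sigma$ makes at most $c\log n$ mistakes on every initial block with high probability. A minor additional point is to confirm that labelings of $V_0^m$ with atypical community proportions are covered as well; these are automatically high-discrepancy, and strong distinctness again supplies the informative pairs, so no separate $X_0^*(\epsilon)$ restriction is required to close the argument.
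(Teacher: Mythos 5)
Your proposal follows essentially the same route as the paper: a Chernoff bound conditioned on locations reducing the failure event to a product of $\phi_t$ factors, a location-regularity event whose per-vertex failure probability is driven down to $n^{-(1+\Omega(1))}$ by tuning the distinguishing threshold via Corollary \ref{cor:distinguishing_vertices} so that the union bound over the $O(n^{1-\lambda r/4}\log n)$ initial-block vertices closes, a super-polynomial bound $\Phi^{\Theta(\log^2 n)}$ absorbing the $k^{\epsilon_0\log n}$ labelings, and a final union bound over segments using Lemma \ref{lem:max_initial_blocks_1d}. You correctly identify the key tension (polynomially small per-block regularity failure versus polynomially many segments) and the correct resolution, which is exactly the paper's.

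The one step that does not go through as written is your source of informative pairs. You pair each mislabeled vertex with every \emph{correctly} labeled vertex, which yields $\Theta(\log^2 n)$ informative pairs only when $\Omega(\log n)$ vertices are correctly labeled; a labeling with $d_H(\sigma,\sigma^*)$ close to $|V_0^m|=\epsilon_0\log n$ (for instance a near-swap of two communities, which has perfectly typical proportions) leaves too few correctly labeled partners, so your parenthetical attributing the problem to ``atypical community proportions'' misdiagnoses the failure mode and does not repair it. The fix is immediate under Assumption \ref{assump:strong_distinctness} and is essentially what the paper does: by pigeonhole some mislabeled type $(i,j)$, $i\neq j$, contains at least $c\log n/(k(k-1))$ vertices, and any two vertices $u,v$ of that same type contribute $\phi_t(\overline{P}_{jj}(\|u-v\|),\overline{P}_{ii}(\|u-v\|))$ with $\{j\}\neq\{i\}$, so strong distinctness makes all $\Theta(\log^2 n)$ such within-type pairs informative regardless of how many vertices are labeled correctly. (The paper instead pairs the large mislabeled class $(i,j)$ with a second large class $(a,b)$ obtained as in \cite[Lemma D.11]{Gaudio+2025}.) With that substitution your argument is complete.
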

\begin{proof}
    We first consider the probability of making more than $c \log n$ mistakes for a given initial block $V_0^m$. Fix $\sigma$ such that $\text{DISC}(\sigma, \sigma^*) > c \log n$ on an initial block $V_0^m$. As in the proof of Proposition \ref{prop:high_discrepancy}, we have that conditioned on the locations of vertices given by $L_0^m$
    \begin{align}
    \label{eq:MAP_1d_error_eq1}
        \P(\ell_0(G, \sigma) - \ell_0(G, \sigma^*) > 0 \mid L_0^m)  &\leq \E[\exp\big(t(\ell_0(G, \sigma) - \ell_0(G, \sigma^*))\big) \mid L_0^m] \nonumber \\
        &= \prod_{u \in V_0^m} \prod_{v \in V_0^m, v \neq u} \phi_t(\overline{P}_{\sigma(u)\sigma(v)}(\|u-v\|), \overline{P}_{\sigma^*(u)\sigma^*(v)}(\|u-v\|)).
    \end{align}
    Let $n_{ij} = \{v \in V_0^m: \sigma^*(v) = i, \sigma(v) = j\}$. 
    By the pigeonhole principle, there exists $i \neq j$ such that $n_{ij} \geq \frac{c \log n}{k(k-1)}$. Then, defining $\epsilon > 0$ as in \eqref{eq:def_c_epsilon} and using the same reasoning as in \cite[Lemma D.11]{Gaudio+2025}, there exist $a, b \in Z$ such that $n_{ab} \geq (\epsilon \epsilon_0 \log n) / k$ with high probability. Then, we note that \eqref{eq:MAP_1d_error_eq1} is upper-bounded by
    \begin{equation}
    \label{eq:MAP_1d_error_eq2}
        \P(\ell_0(G, \sigma) - \ell_0(G, \sigma^*) > 0 \mid L_0^m ) \leq \prod_{\substack{u \in V_0^m \\ \sigma^*(u) = i, \sigma(u) = j}} \prod_{\substack{v \in V_0^m , v \neq u \\ \sigma^*(v) = a, \sigma(v) = b}} \phi_t(\overline{P}_{jb}(\|u-v\|), \overline{P}_{ia}(\|u-v\|))
    \end{equation}
    Fix a vertex $u \in V_0^m$, and let $D$ be the distance between $u$ and a vertex placed uniformly at random in the initial block $B_1^m$. Fix $\alpha > 0$, and let $\Phi_1 < 1$ be such that $\phi_t(\overline{P}_{i'j'}(D), \overline{P}_{a'b'}(D)) \geq \Phi_1$ for all $i',j',a',b' \in Z$ with $P_{i'j'} \not \equiv P_{a'b'}$ with probability at most $\alpha$ via Corollary 1. Using a calculation analogous to that in Lemma \ref{lem:propagate_distinguishing_vertices}, we obtain that for sufficiently small $\alpha$, the number of vertices $v \in V_0^m: \sigma^*(v) = a, \sigma(v) = b$ such that 
    \begin{equation*}
        \phi_t(\overline{P}_{i'j'}(\|u-v\|), \overline{P}_{a'b'}(\|u-v\|)) < \Phi_1
    \end{equation*}
    for all $i',j',a',b' \in Z$ with $P_{i'j'} \not \equiv P_{a'b'}$ is at least $(\epsilon \epsilon_0 \log n) / (2k)$ with probability $1 - n^{-(1 + \Omega(1))}$. Since there are at most $\epsilon_0 \log n$ vertices per initial block and $n^{1 - \lambda/4}$ initial blocks, taking a union bound shows that this holds simultaneously for all $u \in V_0^m$ and $m \in [\ell]$ (i.e. for all vertices in the set of initial blocks) with probability $1 - o(1)$. For a particular block $B_0^m$, we denote the event that all $u \in V_0^m$ have at least $(\epsilon \epsilon_0 \log n)/(2k)$ vertices $v \in V_0^m$ such that
    \begin{equation*}
        \phi_t(\overline{P}_{ij}(\|u-v\|), \overline{P}_{i'j'}(\|u-v\|)) < \Phi_1 
    \end{equation*}
    for all $i',j',a',b' \in Z$ with $P_{i'j'} \not \equiv P_{a'b'}$ as $\{L_0^m \in \calL_\text{good}^m$\}.
    
    Now, due to Assumption \ref{assump:strong_distinctness}, we have that $P_{jb} \not \equiv P_{ia}$. Thus, the number of vertices $v \in V_0^m$ with $\sigma^*(v) = a, \sigma(v) = b$ for which 
    \begin{equation*}
        \phi_t(\overline{P}_{ij}(\|u-v\|), \overline{P}_{ab}(\|u-v\|)) < \Phi_1
    \end{equation*}
    is at least $(\epsilon \epsilon_0 \log n)/(2k)$ whenever $L_0^m \in \calL_\text{good}^m$. Hence, we can upper-bound \eqref{eq:MAP_1d_error_eq2} conditioned on $\{L_0^m \in \calL_\text{good}^m\}$, which yields
    \begin{equation*}
        \P(\ell_0(G, \sigma) - \ell_0(G, \sigma^*) > 0 \mid L_0^m \in \calL_\text{good}^m ) \leq \Phi_1^{\frac{c \log n}{k(k-1)} \frac{\epsilon \epsilon_0 \log n}{2k}} = C^{\log^2 n} = n^{-\Omega(\log n)}
    \end{equation*}
    where $C < 1$ is a constant. Taking a union bound over the $k^{\epsilon_0 \log n}$ possible labelings, we obtain
    \begin{equation*}
        \P(\bigcup_{\sigma: \text{DISC}(\sigma, \sigma^*) > c \log n}\ell_0(G, \sigma) - \ell_0(G, \sigma^*) > 0 \mid L_0^m \in \calL_\text{good}^m ) \leq k^{\epsilon_0 \log n} n^{-\Omega(\log n)} = n^{-\Omega(\log n)},
    \end{equation*}
    which implies that
    \begin{equation*}
        \P( |\{u \in V_0^m: \hat{\sigma}(u) \neq \omega^* \circ \sigma^*(u)\}| > c \log n \mid L_0^m \in \calL_\text{good}^m) = n^{-\Omega(\log n)}.
    \end{equation*}
    Hence, we have an upper bound for the probability that conditioned on $L_0^m \in \calL_\text{good}^m$, the MAP estimator $\hat{\sigma}$ makes more than  $c \log n$ mistakes on $V_0^m$. 
    
    We now upper-bound the probability that the MAP estimator makes more than $c \log n$ mistakes in any initial block via the union bound. Let $L_0$ denote the locations of vertices in all initial blocks, and define the event $\{L_0 \in \calL_\text{good}\} := \bigcap_{m \in [\ell]} \{L_0^m \in \calL_\text{good}^m\}$. Taking a union bound over $n^{1 - \lambda/4}$ initial blocks, we obtain that conditioned on $\{L_0 \in \calL_\text{good}\}$, the MAP estimator $\hat{\sigma}$ makes more than $c \log n$ mistakes on any initial block with probability at most $n^{1-\lambda r/4} n^{-\Omega(\log n)} = o(1)$. That is,
    \begin{equation*}
        \P\bigg( \bigcup_{m \in [\ell]} \Big\{ |\{u \in V_0^m: \hat{\sigma}(u) \neq \omega^* \circ \sigma^*(u)\}| > c \log n \Big\} \mid L_0 \in \calL_\text{good}\bigg) = o(1).
    \end{equation*}
    Finally, since $\P(L_0 \in \calL_\text{good}) = 1 - o(1)$ by definition of $\Phi_1$, we obtain that the unconditional probability is $o(1)$ as well, i.e.
    \begin{equation*}
        \P\bigg( \bigcup_{m \in [\ell]} \Big\{ |\{u \in V_0^m: \hat{\sigma}(u) \neq \omega^* \circ \sigma^*(u)\}| > c \log n \Big\} \bigg) = o(1).
    \end{equation*}
\end{proof}

\subsection{Propagate}
For each segment, we label the remaining blocks using \texttt{Propagate}. We will show that for any $c > 0$, we make less than $c \log n$ mistakes on all blocks labeled with Propagate using the strategy of \cite{Gaudio+2025}. Let $\calA_i^m$ be the event that $\hat{\sigma}$ makes at most $c \log n$ mistakes in $V_i^m$ and let $C_i^m$ denote the configuration of $V_i^m$.

The following lemma upper-bounds the probability of making a mistake on any given vertex $v \in V_i^m$.
\begin{lemma}
    Let $d = 1, |\Omega_{\pi, P}| = 1$. Suppose that Assumptions \ref{assump:bounded_log_likelihood} and \ref{assump:strong_distinctness} are satisfied. Fix $\delta > 0$ and let $\Phi_2$ be such that Lemma \ref{lem:propagate_distinguishing_vertices} is satisfied.    Let $c > 0$ be such that 
    \begin{equation}
    \label{eq:choice_c_propagate_1d}
        c < \frac{\delta \log(1 / \Phi_2)}{2k(\eta+\log(1/\Phi_2))}.
    \end{equation}  
    Let $B_i^m$ be a $\delta$-occupied block with vertices $V_i^m$, and let $B_{p(i)}^m$ with vertices $V_{p(i)}^m$ be its parent block. Suppose that $V_{p(i)}^m$ has configuration $C_{p(i)}^m$ such that $\hat{\sigma}$ makes at most $c \log n$ mistakes on $V_{p(i)}^m$ and that all vertices $v \in V_i$ are $\Phi_2$-distinguished by $V_{p(i)}^m$. Suppose that $\hat{\sigma}$ labels $V_i^m$ using Propagate. Then, there exists a constant $c_3 > 0$ such that for any given $v \in V_i^m$, the probability of $\hat{\sigma}$ labeling $v$ incorrectly is
    \begin{equation}
    \label{eq:propagate_1d_single_vertex_bound}
        \P(\hat{\sigma}(v) \neq \sigma^*(v) \mid C_{p(i)}^m) \leq n^{-c_3}
    \end{equation}
    Furthermore, suppose that $|V_i^m| < \Delta \log n$ for some constant $\Delta > 0$. Then, there exist constants $c_4$ and $c_5$ such that the probability of $\hat{\sigma}$ making more than $c \log n$ mistakes on $V_i^m$ is
    \begin{equation}
    \label{eq:propagate_1d_block_bound}
        \P\bigg( | \{v \in V_i^m: \hat{\sigma}(v) \neq \sigma^*(v)\} | > c \log n \: \Big| \: C_{p(i)}^m, |V_i^m| \leq \Delta \log n \bigg) \leq c_4^{\log n} n^{-c_5 \log n}.
    \end{equation}
\end{lemma}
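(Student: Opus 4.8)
The plan is to mirror the proof of Proposition \ref{prop:propagation_error_block}, with two structural changes dictated by the present setting. First, since $|\Omega_{\pi, P}| = 1$, the only permissible relabeling is the identity, so $\omega^* \circ \sigma^*$ collapses to $\sigma^*$ everywhere. Second, the block now tolerates up to $c \log n$ mistakes rather than a fixed constant $M$, which forces the more delicate choice of $c$ recorded in \eqref{eq:choice_c_propagate_1d}. For the single-vertex bound \eqref{eq:propagate_1d_single_vertex_bound}, I would fix $v \in V_i^m$ with $\sigma^*(v) = \ell$, let $j$ be the plurality label of $V_{p(i)}^m$ under $\hat{\sigma}$, and for each $s \neq \ell$ run the Chernoff argument of \eqref{eq:propagate_single_vertex_error_1}--\eqref{eq:propagate_single_vertex_error_2} verbatim, splitting the neighbors $u$ with $\hat{\sigma}(u) = j$ into the correctly labeled set $R_+(v)$ and the incorrectly labeled set $R_-(v)$, so that
\[
\P(\hat{\sigma}(v) = s \mid C_{p(i)}^m) \leq \prod_{u \in R_+(v)} \phi_t(\overline{P}_{js}(\|u-v\|), \overline{P}_{j\ell}(\|u-v\|)) \cdot \prod_{u \in R_-(v)} e^{\eta}.
\]

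For the $R_-(v)$ factor I would use Assumption \ref{assump:bounded_log_likelihood} together with the hypothesis that $\hat{\sigma}$ makes at most $c \log n$ mistakes on $V_{p(i)}^m$, bounding the product by $e^{\eta c \log n} = n^{\eta c}$. For the $R_+(v)$ factor I would invoke the $\Phi_2$-distinguished property and strong distinctness (Assumption \ref{assump:strong_distinctness}), which guarantees $P_{js} \not\equiv P_{j\ell}$ and hence that at least $(\delta \log n)/(2k)$ neighbors $u$ with $\hat{\sigma}(u) = j$ satisfy $\phi_t(\overline{P}_{js}(\|u-v\|), \overline{P}_{j\ell}(\|u-v\|)) < \Phi_2$; subtracting the at-most $c \log n$ parent-block mistakes leaves at least $(\delta/(2k) - c)\log n$ of these in $R_+(v)$, so that factor is at most $\Phi_2^{(\delta/(2k) - c)\log n}$. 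Multiplying the two factors yields
\[
\P(\hat{\sigma}(v) = s \mid C_{p(i)}^m) \leq n^{\,c(\eta + \log(1/\Phi_2)) - (\delta/(2k))\log(1/\Phi_2)},
\]
whose exponent is a strictly negative constant $-c_3$ precisely under \eqref{eq:choice_c_propagate_1d}. A union bound over the $k-1$ choices of $s$ then gives \eqref{eq:propagate_1d_single_vertex_bound}.

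For the block bound \eqref{eq:propagate_1d_block_bound}, I would observe, exactly as in Proposition \ref{prop:propagation_error_block}, that conditioned on $C_{p(i)}^m$ the mislabeling events are independent (they depend on disjoint, independently generated edge sets), so the mistake count $K_i^m$ is stochastically dominated by $\mathrm{Bin}(\Delta \log n, n^{-c_3})$. Since the threshold $c \log n$ greatly exceeds the mean $n^{-c_3}\Delta \log n$, this is a deep large-deviation event, and I would apply the crude tail bound $\P(\mathrm{Bin}(N,p) \geq m) \leq \binom{N}{m} p^m \leq (eNp/m)^m$ (consistent with Lemma \ref{lem:binomial_Chernoff}) with $N = \Delta \log n$, $p = n^{-c_3}$, and $m = c \log n$. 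This produces $(e\Delta/c)^{c\log n}\, n^{-c_3 c \log n}$, which is exactly \eqref{eq:propagate_1d_block_bound} with $c_4 = (e\Delta/c)^c$ and $c_5 = c_3 c$.

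The hard part is the single-vertex estimate. Unlike Proposition \ref{prop:propagation_error_block}, where the parent block contributed only a constant number of errors and hence a constant factor $e^{\eta M}$, here the parent block can contribute up to $c\log n$ errors, so the adverse factor $n^{\eta c}$ grows polynomially in $n$. The argument only closes if the polynomial decay $\Phi_2^{(\delta/(2k) - c)\log n}$ coming from the correctly labeled distinguishing vertices strictly dominates this polynomial growth, and the threshold \eqref{eq:choice_c_propagate_1d} is exactly the condition that makes the net exponent $c_3$ positive. Strong distinctness is essential in this step, since it ensures that every pair of distinct labels is separable and therefore that the $\Phi_2$-distinguished vertices are genuinely informative for telling $s$ from $\ell$.
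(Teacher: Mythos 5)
Your proposal is correct and follows essentially the same route as the paper: the same Chernoff/$R_+$--$R_-$ decomposition inherited from Proposition \ref{prop:propagation_error_block}, the same bookkeeping $e^{\eta c\log n}\Phi_2^{(\delta/(2k)-c)\log n}$ leading to the exponent $c_3 = \delta\log(1/\Phi_2)/(2k) - c(\eta+\log(1/\Phi_2))$, and the same stochastic domination by $\mathrm{Bin}(\Delta\log n, n^{-c_3})$ with a tail bound yielding $c_4 = (e\Delta/c)^c$ and $c_5 = c_3 c$. Your closing observation that \eqref{eq:choice_c_propagate_1d} is exactly the condition making the polynomial decay beat the polynomially growing error factor $n^{\eta c}$ is precisely the point of the lemma.
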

\begin{proof}
    Let $\sigma^*(v) = a$. We first upper bound $\P(\hat{\sigma}(v) = s \mid C_{p(i)}^m, \sigma^*(v) = a)$, i.e. the probability that $\hat{\sigma}$ labels $v$ incorrectly as $s \neq a$. Following the reasoning of Proposition \ref{prop:propagation_error_block}, under Assumptions \ref{assump:bounded_log_likelihood} and \ref{assump:strong_distinctness} we obtain that
    \begin{align*}
        \P(\hat{\sigma}(v) = s \mid C_{p(i)}^m, \sigma^*(v) = a) &\leq e^{\eta c \log n} \Phi_2^{(\delta \log n)/(2k) - c \log n} \\
        &= \Phi_2^{(\delta \log n)/(2k)} \left(\frac{e^\eta}{\Phi_2}\right)^{c \log n} \\
        &= n^{-\delta \log(1 / \Phi_2)/(2k)} e^{(\eta + \log(1/\Phi_2))c \log n} \\
        &= n^{-\delta \log(1 / \Phi_2)/(2k)} n^{(\eta + \log(1/\Phi_2))c} \\
        &= n^{-c_3}
    \end{align*}
    where $c_3 := -\delta \log(1 / \Phi_2)/(2k) + c(\eta+\log(1/\Phi_2)) > 0$ by the choice of $c$ in \eqref{eq:choice_c_propagate_1d}. Since this holds for all communities $s \neq a$, we obtain
    \begin{equation*}
        \P(\hat{\sigma}(v) \neq \sigma^*(v) \mid C_{p(i)}^m) \leq n^{-c_3},
    \end{equation*}
    thus showing \eqref{eq:propagate_1d_single_vertex_bound}.

    We now upper bound the probability that $\hat{\sigma}$ makes more than $c \log n$ mistakes on $V_i^m$. Let $K_i^m := |\{v \in V_i^m: \hat{\sigma}(v) \neq \sigma^*(v)\}|$ be the number of mistakes that $\hat{\sigma}$ makes on $V_i^m$, and let $\calE_v := \{\hat{\sigma}(v) \neq \sigma^*(v)\}$ be the event that a particular vertex $v$ is mislabeled by $\hat{\sigma}$. Conditioned on $C_{p(i)}^m$ and $|V_i^m| < \Delta \log n$, we have that $K_i^m \preceq \text{Bin}(\Delta \log n, n^{-c_3})$. Letting $\mu := n^{-c_3} \Delta \log n$, the binomial Chernoff bound (Lemma \ref{lem:binomial_Chernoff}) yields
    \begin{align*}
        \P(\text{Bin}(\Delta \log n, n^{-c_3}) > c \log n) &\leq \left(\frac{e^{(c \log n - \mu) / \mu}}{(c \log(n) / \mu)^{c \log (n) / \mu}}\right) \\
        &= e^{c \log n - \mu} \left(\frac{\mu}{c \log n}\right)^{c \log n} \\
        &\leq \left(\frac{e \mu}{c \log n}\right)^{c \log n} \\
        &= \left(\frac{e n^{-c_3} \Delta \log n}{c \log n}\right)^{c \log n} \\
        &= \left(\frac{e \Delta}{c}\right)^{c \log n} \cdot n^{-c_3 c \log n} \\
        &= c_4^{\log n} n^{-c_5 \log n}
    \end{align*}
    where $c_4 := (e\Delta/c)^c$ and $c_5 := c_3 c$, thus showing \eqref{eq:propagate_1d_block_bound}.
\end{proof}

We now prove that $\hat{\sigma}$ makes at most $c \log n$ mistakes on all $\delta$-occupied blocks with high probability. Like before, a minor modification to Lemma D.1 in \cite{Gaudio+2025} shows that there exists a constant $\Delta > 0$ such that all blocks have at most $\Delta \log n$ vertices with high probability.

\begin{lemma}
\label{lem:maximum_vertices_per_block_1d}
For the blocks obtained in Line \ref{line:create_blocks_1d} of Algorithm \ref{alg:exact_recovery_1d}, there exists $\Delta > 0$ such that
\begin{equation*}
    \P(\bigcap_{i=1}^{2n/(r \log n)} \{|V_i| \leq \Delta \log n\}) = 1 - o(1).
\end{equation*}
\end{lemma}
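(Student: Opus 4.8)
The plan is to reduce this to a union bound over the $2n/(r\log n)$ blocks, where each individual block's vertex count is controlled by the Poisson Chernoff bound (Lemma \ref{lem:poisson_Chernoff}). Since the vertex locations are generated by a homogeneous Poisson point process of intensity $\lambda$ and each block $B_i$ is an interval of length $(r\log n)/2$, the count $|V_i|$ is a Poisson random variable with mean $\mu := (\lambda r/2)\log n$, and these counts are independent across disjoint blocks. The only thing to verify is that we can choose $\Delta$ large enough that the per-block tail probability decays faster than $1/n$, so that the union bound over $\Theta(n)$ blocks still yields $o(1)$.

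Concretely, I would fix $\Delta > \lambda r / 2$ and apply the upper-tail bound of Lemma \ref{lem:poisson_Chernoff} with $t := \Delta \log n - \mu = (\Delta - \lambda r/2)\log n > 0$, so that $\mu + t = \Delta \log n$. This gives
\begin{equation*}
    \P(|V_i| \geq \Delta \log n) \leq \exp\left(-\frac{t^2}{2(\mu+t)}\right) = \exp\left(-\frac{(\Delta - \lambda r/2)^2}{2\Delta}\log n\right) = n^{-\frac{(\Delta - \lambda r/2)^2}{2\Delta}}
\end{equation*}
for every block $i$. Taking a union bound over all $2n/(r\log n)$ blocks then yields
\begin{equation*}
    \P\left(\bigcup_{i=1}^{2n/(r\log n)} \{|V_i| > \Delta \log n\}\right) \leq \frac{2n}{r\log n}\cdot n^{-\frac{(\Delta - \lambda r/2)^2}{2\Delta}}.
\end{equation*}

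The final step is to observe that as $\Delta \to \infty$ the exponent $(\Delta - \lambda r/2)^2/(2\Delta)$ grows like $\Delta/2$ and in particular exceeds $1$ for all sufficiently large $\Delta$; fixing any such $\Delta$ makes the right-hand side $o(1)$, which is the claim. There is essentially no analytic obstacle here—the computation is routine, and the only genuine design choice is picking $\Delta$ large enough to beat the linear number of blocks. I would remark that this is exactly the $d=1$ specialization of the block-count bound in \cite{Gaudio+2024} (the general-$d$ version underlies Lemma \ref{lem:maximum_vertices_per_block}), the only differences being the block length $(r\log n)/2$ and the resulting per-block mean $(\lambda r/2)\log n$, so the proof transfers verbatim once $\Delta$ is chosen appropriately.
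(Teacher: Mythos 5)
Your proof is correct and is exactly the routine Poisson--Chernoff-plus-union-bound argument that the paper itself defers to by citing Lemma D.1 of prior work rather than writing it out; the computation of the per-block mean $(\lambda r/2)\log n$, the tail exponent $(\Delta-\lambda r/2)^2/(2\Delta)$, and the choice of $\Delta$ large enough that this exponent exceeds $1$ are all sound.
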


The proof to establish almost-exact recovery is very similar to the proof of Theorem \ref{thm:almost_exact_recovery}.

\begin{theorem}
Let $d = 1, |\Omega_{\pi, P}| = 1$. Suppose that $\delta > 0$ satisfies Lemma \ref{lem:max_initial_blocks_1d} and $\Phi_2$ satisfies Lemma \ref{lem:propagate_distinguishing_vertices}. Suppose that Assumptions \ref{assump:bounded_log_likelihood} and \ref{assump:strong_distinctness} are satisfied. Then, for any $c > 0$ such that \eqref{eq:choice_c_propagate_1d} holds, the Phase I estimator $\hat{\sigma}$ makes at most $c \log n$ mistakes on all $\delta$-occupied blocks with high probability. Consequently, $\hat{\sigma}$ achieves almost-exact recovery.
\end{theorem}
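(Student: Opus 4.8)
The plan is to follow the proof of Theorem \ref{thm:almost_exact_recovery} almost verbatim, with the two modifications forced by the one-dimensional setting: the block visibility graph may be disconnected, so there is an initial block per segment rather than a single root, and $|\Omega_{\pi,P}|=1$, so the permissible relabeling $\omega^*$ is the identity and all mistakes are measured directly against $\sigma^*$. First I would introduce the global ``good'' events $\mathcal{I} = \{\text{all blocks have at most } \Delta\log n \text{ vertices}\}$, guaranteed with probability $1-o(1)$ by Lemma \ref{lem:maximum_vertices_per_block_1d}, and $\mathcal{J} = \{\text{all vertices are } \Phi_2\text{-distinguished by their parent block}\}$, guaranteed by Lemma \ref{lem:propagate_distinguishing_vertices}. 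In contrast to the connected case there is no connectivity event to invoke; instead Lemma \ref{lem:max_initial_blocks_1d} bounds the number of segments, and Proposition \ref{prop:MAP_1d} guarantees that the MAP step makes at most $c\log n$ mistakes on \emph{every} initial block $V_0^m$ simultaneously, with probability $1-o(1)$.

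Next I would perform the same conditional-probability decomposition as in Theorem \ref{thm:almost_exact_recovery}. Writing $\mathcal{A}_i^m$ for the event that $\hat{\sigma}$ makes at most $c\log n$ mistakes on $V_i^m$, I would lower bound $\P(\bigcap_{m,i}\mathcal{A}_i^m)$ by $\P(\bigcap_{m,i}\mathcal{A}_i^m \mid \mathcal{I}\cap\mathcal{J})\,\P(\mathcal{I}\cap\mathcal{J})$ and then factor the conditional probability along the breadth-first ordering within each segment, peeling off one block at a time. Conditioning on the parent configuration $C_{p(i)}^m$ renders the child event $\mathcal{A}_i^m$ independent of the previously processed blocks (disjoint edge sets are used), so each propagation factor is at least $1 - c_4^{\log n}n^{-c_5\log n}$ by the propagation lemma, while Proposition \ref{prop:MAP_1d} supplies the factor for the initial blocks. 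Since the total number of blocks is $2n/(r\log n)$, Bernoulli's inequality yields
\[
\P\Big(\bigcap_{m,i}\mathcal{A}_i^m\Big) \geq (1-o(1))\big(1 - c_4^{\log n} n^{-c_5\log n}\big)^{2n/(r\log n)} \geq 1-o(1),
\]
because $c_4^{\log n}n^{-c_5\log n} = n^{-(c_5\log n - \log c_4)}$ and hence $n\cdot c_4^{\log n}n^{-c_5\log n} = o(1)$.

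Finally I would convert this per-block guarantee into almost-exact recovery by a counting argument. On $\bigcap_{m,i}\mathcal{A}_i^m$, each of the at most $2n/(r\log n)$ $\delta$-occupied blocks contributes at most $c\log n$ mistakes, for a total of $O(cn)$, while the $\delta$-unoccupied blocks contain only $o(n)$ vertices in total by Lemma \ref{lem:max_initial_blocks_1d}. Since $|V| = \Theta(n)$ with high probability, the misclassification fraction is at most $O(c) + o(1)$; as $c>0$ can be taken arbitrarily small (the constraint \eqref{eq:choice_c_propagate_1d} is only an upper bound), this fraction falls below any fixed $\epsilon>0$ with high probability, giving almost-exact recovery. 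The main obstacle I anticipate is conceptual rather than computational: one must ensure that the independently initialized MAP labelings on the separate segments are all consistent with the single ground truth $\sigma^*$. This is exactly where $|\Omega_{\pi,P}|=1$ (ruling out segment-dependent permutations) and Assumption \ref{assump:strong_distinctness} (forcing every pair of distributions to be distinguishable, so that MAP pins down $\sigma^*$ on each segment rather than a relabeled copy) are essential; without them the segments could not be aligned, which is precisely the mechanism underlying the impossibility result of Theorem \ref{thm:impossibility}.
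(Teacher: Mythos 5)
Your proposal is correct and follows essentially the same route as the paper's proof: the same good events $\mathcal{I}$, $\mathcal{J}$ (plus the segment-count bound from Lemma \ref{lem:max_initial_blocks_1d}, which the paper formalizes as a third conditioning event $\mathcal{H}$), the same peeling decomposition conditioned on parent configurations, Proposition \ref{prop:MAP_1d} for the initial blocks, the propagation bound $c_4^{\log n} n^{-c_5 \log n}$ per block, and Bernoulli's inequality over the $2n/(r\log n)$ blocks. Your closing remarks on segment alignment via $|\Omega_{\pi,P}|=1$ and the explicit mistake-counting for almost-exact recovery are slightly more detailed than the paper's one-line conclusion, but they are not a different argument.
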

\begin{proof}
Let $\Delta > 0$ be such that Lemma \ref{lem:maximum_vertices_per_block_1d} is satisfied, and $0 < \Phi_2 < 1$ such that Lemma \ref{lem:propagate_distinguishing_vertices} is satisfied. Define the events 
\begin{align*}
    \calI &= \text{\{All blocks have at most $\Delta \log n$ vertices\}}, \\
    \calJ &= \text{\{All vertices $v \in V \setminus \cup_{m=1}^\ell V_0^m$ are $\Phi_2$-distinguished by their parent block\}, and} \\
    \calH &= \text{\{There are most $n^{1-\lambda r /4}$ segments\}}.
\end{align*}
Then, let $\calA_i^m$ be the event that $\hat{\sigma}$ makes at most $c \log n$ mistakes $V_{i}^m$. Observe that
\begin{align}
\label{eq:Ai_all_blocks_1d}
    \P\bigg(\bigcap_{m=1}^\ell \bigcap_{i=1}^{n_m} \calA_i^m\bigg) &\geq \P\bigg(\bigcap_{m=1}^\ell \bigcap_{i=1}^{n_m} \calA_i^m \mid \calI \cap \calJ \cap \calH\bigg) \P(\calI \cap \calJ \cap \calH) \notag \\
    &= \bigg[\prod_{m=1}^\ell \P(\calA_0^m \mid \calI \cap \calJ \cap \calH) \prod_{i=1}^{n_m} \P(\calA_i^m \mid \calA_0^m, \ldots, \calA_{i-1}^m, \calI \cap \calJ \cap \calH) \bigg] \P(\calI \cap \calJ \cap \calH) \notag \\
    &= \P\bigg(\bigcap_{m=1}^\ell \calA_0^m \mid \calI \cap \calJ \cap \calH\bigg) \bigg[ \prod_{m=1}^\ell\prod_{i=1}^{n_m} \P(\calA_i^m \mid \calA_0^m, \ldots, \calA_{i-1}^m, \calI \cap \calJ \cap \calH) \bigg] \P(\calI \cap \calJ \cap \calH)
\end{align}
Now, we bound each factor of \eqref{eq:Ai_all_blocks_1d}. For the first factor, we apply Proposition \ref{prop:MAP_1d} to obtain that
\begin{equation}
\label{eq:Ai_first_block_1d}
    \P\bigg(\bigcap_{m=1}^\ell \calA_0^m \mid \calI \cap \calJ \cap \calH\bigg) = 1 - o(1).
\end{equation}
For the second factor, we condition each term on the configuration $C_{p(i)}^m$, and use an argument analogous to the one leading to \eqref{eq:Ai_remaining_blocks} to obtain
\begin{equation}
\label{eq:Ai_remaining_blocks_1d}
    \P(\calA_i^m \mid \calA_0^m, \ldots, \calA_{i-1}^m, \calI \cap \calJ \cap \calH) \geq 1 - c_4^{\log n} n^{-c_5 \log n}.
\end{equation}
For the third factor, we use Lemma \ref{lem:maximum_vertices_per_block_1d}, Lemma \ref{lem:propagate_distinguishing_vertices}, and Lemma \ref{lem:max_initial_blocks_1d} to obtain that
\begin{equation}
\label{eq:IJK_prob_1d}
    \P(\calI \cap \calJ \cap \calH) = 1 - o(1).
\end{equation}
Substituting \eqref{eq:Ai_first_block_1d}, \eqref{eq:Ai_remaining_blocks_1d}, and \eqref{eq:IJK_prob_1d} into \eqref{eq:Ai_all_blocks} and using Bernoulli's inequality yields
\begin{align*}
    \P\bigg(\bigcap_{m=1}^\ell \bigcap_{i=1}^{n_m} \calA_i^m\bigg) &\geq (1 - o(1))\Big(1 - c_4^{\log n} n^{-c_5 \log n}\Big)^{2n/(r \log n)} (1 - o(1)) \\
    &\geq (1 - o(1))\bigg(1 - \frac{2 c_4^{\log n} n^{1 - c_5 \log n}}{r \log n} \bigg) (1 - o(1)) \\
    &= 1 - o(1),
\end{align*}
which shows that $\hat{\sigma}$ makes at most $c \log n$ mistakes on all $\delta$-occupied blocks with high probability. Since $\delta$ and $c$ can be made arbitrarily small, we conclude that $\hat{\sigma}$ achieves almost exact recovery.
\end{proof}

Once we have an almost-exact estimator $\hat{\sigma}$, \texttt{Refine} produces an exact estimator $\tilde{\sigma}$ as shown in Section \ref{subsec:refine}. Thus, we achieve exact recovery when $d = 1$, $| \Omega_{\pi, P}| = 1$.

\section{Proof of Impossibility}
\label{appx:proof_of_impossibility}
In this section, we prove Theorem \ref{thm:impossibility}, our impossibility result. To show that exact recovery is impossible, we will show that the MAP estimator fails with a constant, nonzero probability, which implies that any estimator fails with probability bounded away from zero because the MAP is optimal. The proof of impossibility for $d=1$, $\lambda r < 1$, and $|\Omega_{\pi, P}| \geq 2$ was provided in Section \ref{sec:proof_sketch_impossibility}. Therefore, we just need to show that exact recovery is impossible when $\lambda \nu_d r^d \min_{i \neq j} D_+(\theta_i \Vert \theta_j; \pi, g) < 1$.

\begin{prop}
\label{prop:impossibility_proof_1}
    Any estimator $\tilde{\sigma}: V \to Z$ fails to achieve exact recovery for $G \sim \text{GHCM}(\lambda, n, r, \pi, P(y), d)$ when $\lambda \nu_d r^d \min_{i \neq j} D_+(\theta_i \Vert \theta_j; \pi, g) < 1$.
\end{prop}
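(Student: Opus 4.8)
The plan is to show that the Bayes-optimal MAP estimator fails with probability bounded away from zero; since MAP minimizes the error probability, this implies that \emph{every} estimator fails to achieve exact recovery. I would split on whether the visibility graph is sparse enough to produce isolated vertices. When $\lambda \nu_d r^d < 1$, I would invoke Theorem 7.1 of Penrose \cite{Penrose2003} to conclude that the vertex visibility graph contains an isolated vertex $v$ with high probability. Such a vertex has no visible neighbors, so the posterior distribution of $\sigma^*(v)$ equals its prior $\pi$; the MAP labels $v$ by a fixed rule and is wrong with probability at least $1 - \max_i \pi_i > 0$, giving overall failure probability bounded away from zero. This disposes of the first case and leaves the main case $\lambda \nu_d r^d \geq 1$.

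For $\lambda \nu_d r^d \geq 1$, I would apply Lemma \ref{lem:impossibility_lemma} (Proposition 8.1 of \cite{Abbe+2020}), which reduces impossibility to verifying the first-moment condition \eqref{eq:impossibility_condition_1} and the second-moment condition \eqref{eq:impossibility_condition_2}. The crux is \eqref{eq:impossibility_condition_1}: I would lower-bound the probability that the added vertex $0$ is Flip-Bad. Fixing the pair $(i,j)$ attaining $\min_{i\neq j} D_+(\theta_i \Vert \theta_j; \pi, g)$ and conditioning on $\sigma^*(0) = i$ (probability $\pi_i$), it suffices to lower-bound $\P(\ell_j(0,\sigma^*) \geq \ell_i(0,\sigma^*)) = \P(X \geq 0)$ for the same Poisson-randomized sum $X$ analyzed in Lemma \ref{lem:correct_labels_MAP_failures}. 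The moment generating function computed there gives $\E[e^{tX}] = n^{-\lambda \nu_d r^d(1 - \sum_s \pi_s \overline{\phi}_t(P_{js},P_{is}))}$, whose optimized exponent equals exactly $\lambda \nu_d r^d D_+(\theta_i \Vert \theta_j; \pi, g)$ by \eqref{eq;CH_divergence_alternate}.

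The hard part will be turning this upper-bound computation into a matching \emph{lower} bound on $\P(X \geq 0)$. Where Lemma \ref{lem:correct_labels_MAP_failures} only needed the Chernoff upper bound, here I need the Cram\'er lower bound (Lemma \ref{lem:Cramer}, adapted to the random number of summands via Fact \ref{fact:CGF_formula}) to obtain $\P(X \geq 0) \geq n^{-\lambda \nu_d r^d D_+(\theta_i \Vert \theta_j; \pi, g) - o(1)}$. Since the minimizing divergence satisfies $\lambda \nu_d r^d D_+(\theta_i \Vert \theta_j; \pi, g) < 1$, this yields $\P(0 \text{ is Flip-Bad}) \geq \pi_i \, n^{-1+\beta}$ for some constant $\beta > 0$, hence $n\,\E^{0}[\1_{F_0^{G\cup\{0\}}}] \to \infty$, establishing \eqref{eq:impossibility_condition_1}; this follows the analysis of Lemma B.4 in \cite{Gaudio+2025}. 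I anticipate the main technical care is in justifying the large-deviations lower bound with a genuinely Poisson number of terms and in controlling the $o(1)$ slack so that it does not eat into the $\beta$ margin. Finally, for \eqref{eq:impossibility_condition_2} I would argue as in Lemma B.5 of \cite{Gaudio+2025}: two candidate vertices $0$ and $y$ interact only when $\|y\| = O((\log n)^{1/d})$, a vanishing fraction of $\calS_{d,n}$, so the joint Flip-Bad probability factorizes up to lower-order terms and the ratio in \eqref{eq:impossibility_condition_2} is at most $1$ in the limit.
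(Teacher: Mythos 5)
Your proposal follows essentially the same route as the paper: the same split into $\lambda \nu_d r^d < 1$ (isolated vertex via Penrose's Theorem 7.1, so the MAP errs with probability at least $1-\max_i \pi_i$) and $\lambda \nu_d r^d \geq 1$ (Proposition 8.1 of \cite{Abbe+2020}, with the first-moment condition verified by a Cram\'er lower bound on the Poisson-randomized log-likelihood-ratio sum whose rate is tied to $\lambda \nu_d r^d D_+(\theta_i\Vert\theta_j;\pi,g)$, and the second-moment condition handled as in Lemma B.5 of \cite{Gaudio+2025}). The technical points you flag---truncating the Poisson count at $\delta\log n$ to apply Cram\'er's theorem and keeping the slack from eating the $\beta$ margin---are precisely where the paper's Lemma \ref{lem:impossibility_lemma_1} does its work, so the plan is sound and matches the paper's proof.
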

We will split the proof into two cases: $\lambda \nu_d r^d < 1$ and $\lambda \nu_d r^d \geq 1$.

Case 1: $\lambda \nu_d r^d < 1$. In this case, we will show that the vertex visibility graph contains an isolated vertex with high probability. Consequently, there is a constant, nonzero probability that the MAP estimator fails since it fails to label the isolated vertex correctly with a constant, nonzero probability.

To accomplish this, we note that the vertex visibility graph is a \textit{random geometric graph}. We define the random geometric graph $RGG(d, n, x)$ as follows: First, we distribute vertices within the $d$-dimensional unit torus by a Poisson point process of intensity $n$. Then, for any pair of vertices, we form an edge if and only if the distance between the vertices is less than $x$. 

The vertex visibility graph of $G$ is an $RGG(d, \lambda n, r(\frac{\log n}{n})^{1/d})$ because points are distributed on the volume $n$ torus by a Poisson point process of intensity $\lambda$ with visibility radius $r(\log n)^{1/d}$. Rescaling the volume by $1/n$ yields a Poisson point process of intensity $\lambda n$ on a unit torus with visibility radius $r(\frac{\log n}{n})^{1/d}$.

Now, we use the following result from the literature on random geometric graphs.
\begin{lemma}[Theorem 7.1 in \cite{Penrose2003}]
\label{lem:isolated_vertex}
    Let $M_n := \inf_{x \in \R}\{\text{RGG}(d, n, x) \text{ has no isolated vertex}\}$. Then, 
    \begin{equation*}
        \lim_{n \to \infty} \frac{n \nu_d M_n^d}{\log n} = 1 \quad \text{a.s.},
    \end{equation*}
    which implies that
    \begin{equation*}
        M_n \sim \bigg(\frac{\log n}{n \nu_d}\bigg)^{1/d} \quad \text{a.s.}
    \end{equation*}
\end{lemma}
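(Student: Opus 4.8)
The plan is to recognize $M_n$ as the maximal nearest-neighbor distance in the underlying Poisson process and to determine its almost-sure asymptotics by a first- and second-moment analysis combined with a Borel--Cantelli argument. First I would reformulate the quantity: for each point $v$ of the intensity-$n$ Poisson process on the unit torus, let $D_v$ denote the distance from $v$ to its nearest other point. A vertex $v$ is isolated in $\mathrm{RGG}(d,n,x)$ precisely when $D_v > x$, and the graph is monotone in $x$ (increasing $x$ only adds edges), so $M_n = \max_v D_v$, the largest nearest-neighbor distance. Writing $N_x := \#\{v : D_v > x\}$ for the number of isolated vertices at radius $x$, we have $M_n = \inf\{x : N_x = 0\}$, and it suffices to pin down the asymptotics of this maximum.

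Next I would compute the first moment. By the Mecke (Palm) formula for the Poisson process, and using that on the torus a ball of radius $x$ has volume $\nu_d x^d$ with no boundary effects in the relevant regime $x = o(1)$, the probability that a typical point is isolated is the void probability $e^{-n\nu_d x^d}$, so $\E[N_x] = n\, e^{-n\nu_d x^d}$. Parametrizing $n\nu_d x^d = \alpha \log n$ gives $\E[N_x] = n^{1-\alpha}$, which already exhibits the threshold at $n\nu_d x^d \approx \log n$. For the upper bound $\limsup_n n\nu_d M_n^d/\log n \le 1$ I would take $\alpha > 1$: then $\E[N_x] \to 0$ and Markov's inequality gives $N_x = 0$ with high probability. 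For the lower bound $\liminf_n n\nu_d M_n^d/\log n \ge 1$ I would take $\alpha < 1$, so $\E[N_x] \to \infty$, and run a second-moment argument showing $\mathrm{Var}(N_x) = o(\E[N_x]^2)$; the key point is that the events $\{D_u > x\}$ and $\{D_v > x\}$ are essentially independent once $\|u-v\| > 2x$, and the contribution of close pairs is negligible, so Chebyshev's inequality yields $N_x > 0$ with high probability. Together these give convergence in probability of $n\nu_d M_n^d/\log n$ to $1$.

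To upgrade to almost-sure convergence, I would work along a geometric subsequence $n_k = \rho^k$, along which the tail bounds become summable: for the first moment $\E[N_{x_{n_k}}] = n_k^{1-\alpha}$ decays geometrically when $\alpha > 1$, and for the second moment the ratio $\mathrm{Var}/\E^2 = O(n_k^{-(1-\alpha)})$ is summable when $\alpha < 1$. Borel--Cantelli then yields the matching one-sided bounds for $n_k\nu_d M_{n_k}^d/\log n_k$ almost surely along $n_k$. One then interpolates to the full sequence by coupling all the processes through a single marked Poisson process on $[0,1]^d \times [0,\infty)$ so that points accumulate as $n$ grows; the configuration for $n \in [n_k, n_{k+1}]$ differs from those at the endpoints by only $O((\rho-1)n_k)$ points, and since $\log n_{k+1}/\log n_k \to 1$ one sandwiches $n \nu_d M_n^d/\log n$ between quantities tending to the same limit. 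Sending $\rho \downarrow 1$ and $\alpha \downarrow 1$ gives $\lim_n n\nu_d M_n^d/\log n = 1$ almost surely, and rearranging yields $M_n \sim (\log n/(n\nu_d))^{1/d}$.

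The main obstacle is precisely this final interpolation step. The moment computations pin down the threshold and give convergence in probability quite directly, but promoting this to an almost-sure limit requires a delicate coupling across different intensities: because $N_x$ (equivalently $M_n$) is not monotone in $n$ at a fixed radius --- adding points can create newly isolated vertices even as it connects old ones --- one must control the contribution of the points added within each subsequence gap and verify that the variance bound in the second-moment step decays fast enough to survive Borel--Cantelli along the chosen subsequence. Since this lemma is quoted directly from Penrose, these details would not be reproduced in the paper.
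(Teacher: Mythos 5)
The paper does not prove this statement at all: it is imported verbatim as Theorem 7.1 of Penrose's monograph and used as a black box in the impossibility argument for the case $\lambda \nu_d r^d < 1$. Your sketch --- identifying $M_n$ with the maximal nearest-neighbour distance, computing $\E[N_x] = n e^{-n\nu_d x^d}$ via the Mecke formula, running first- and second-moment bounds on either side of the threshold $n\nu_d x^d = \log n$, and upgrading to an almost-sure limit by Borel--Cantelli along a geometric subsequence with a coupled Poisson process to interpolate --- is a faithful outline of the argument Penrose actually gives, and you correctly flag the subsequence/coupling step as the only delicate point.
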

Lemma \ref{lem:isolated_vertex} considers a Poisson process of intensity $n$; for a Poisson process of intensity $\lambda n$, we obtain 
\begin{equation*}
    \lim_{n \to \infty} \frac{\lambda n \nu_d M_{\lambda n}^d}{\log \lambda n} = 1.
\end{equation*}
Thus, there exists an isolated vertex on the vertex visibility graph with high probability if the visibility radius is less than $M_{\lambda n}$. Since the visibility radius is $r(\frac{\log n}{n})^{1/d}$ and $M_{\lambda n} \sim (\frac{\log (\lambda n)}{\lambda n \nu_d})^{1/d}$, we obtain that there exists an isolated vertex with high probability if $\lambda \nu_d r^d < 1$. Consequently, the MAP estimator fails with a constant, non-zero probability when $\lambda \nu_d r^d < 1$. Therefore, any estimator fails to achieve exact recovery in Case 1.

Case 2: $\lambda \nu_d r^d \geq 1$. In this case, Lemma \ref{lem:impossibility_lemma} gives a sufficient condition for the MAP estimator to fail. As mentioned in Section \ref{sec:proof_sketch_impossibility}, we will adapt the proofs of Lemma B.4 and Lemma B.5 in \cite{Gaudio+2025}.

\begin{lemma}
\label{lem:impossibility_lemma_1}
    Let $G \sim \text{GHCM}(\lambda, n, r, \pi, P(y), d)$, and suppose that $\lambda \nu_d r^d \geq 1$ and $\lambda \nu_d r^d \min_{i \neq j} D_+(\theta_i \Vert \theta_j; \pi, g) < 1$. Then, \eqref{eq:impossibility_condition_1} is satisfied.
\end{lemma}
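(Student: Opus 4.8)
The plan is to lower-bound $\E^{0}[\1_{F_0^{G \cup \{0\}}}] = \P^0(0 \text{ is Flip-Bad in } G \cup \{0\})$ by the probability of a single ``flip'' of the most confusable pair. First I would condition on the label $\sigma^*(0) = i$ of the planted vertex at the origin and choose the ordered pair $(i,j)$, $i \neq j$, attaining $\min_{i \neq j} D_+(\theta_j \Vert \theta_i; \pi, g)$; relabelling the summation index shows this equals $D_+^{\min} := \min_{i\neq j} D_+(\theta_i\Vert\theta_j;\pi,g)$, so $\lambda \nu_d r^d D_+^{\min} < 1$ by hypothesis. Since the Flip-Bad event $\bigcup_{j'\neq i}\{\ell_{j'}(0,\sigma^*) \geq \ell_i(0,\sigma^*)\}$ contains the single-flip event $\{\ell_j(0,\sigma^*) \geq \ell_i(0,\sigma^*)\}$, it suffices to lower bound the latter. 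Reusing the construction of Lemma \ref{lem:correct_labels_MAP_failures}, the gap $\ell_j(0,\sigma^*) - \ell_i(0,\sigma^*)$ has the law of a compound-Poisson variable $X = \sum_{l=1}^N W^{(l)}$, where $N \sim \text{Pois}(\mu_n)$ with $\mu_n = \lambda\nu_d r^d \log n$ and the i.i.d. increments $W$ satisfy $\E[e^{tW}] = f(t) := \sum_{s\in Z}\pi_s \overline{\phi}_t(P_{js}, P_{is})$; by \eqref{eq:MGF_X_finished} this gives $\E[e^{tX}] = \exp(\mu_n(f(t)-1))$, and by \eqref{eq;CH_divergence_alternate}, $\inf_{t\in[0,1]} f(t) = 1 - D_+(\theta_j\Vert\theta_i; \pi, g) = 1 - D_+^{\min}$.

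The core step is the matching large-deviations lower bound $\P(X \geq 0) \geq n^{-\lambda\nu_d r^d D_+^{\min} - o(1)}$, which I would extract from Cram\'er's Theorem (Lemma \ref{lem:Cramer}). Because the number of increments is random, I first recast $X$ as a sum of a \emph{deterministic} number of i.i.d. blocks: setting $m := \lfloor \log n\rfloor$ and using infinite divisibility of the Poisson law, write $N = N_0 + \sum_{k=1}^m N_k$ with $N_k \overset{\text{iid}}{\sim}\text{Pois}(\lambda\nu_d r^d)$ and a remainder $N_0 \sim \text{Pois}(\lambda\nu_d r^d(\log n - m))$ of bounded mean. Grouping increments accordingly gives $X = Y_0 + \sum_{k=1}^m Y_k$, where $Y_1,\ldots,Y_m$ are independent copies of a compound-Poisson variable $Y$ with $\Lambda_Y(t) = \log\E[e^{tY}] = \lambda\nu_d r^d(f(t)-1)$, a law not depending on $n$. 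Conditioning on the harmless event $\{N_0 = 0\}$, which has probability $\Theta(1)$, reduces matters to $\P(\sum_{k=1}^m Y_k \geq 0)$. Since $\E[W] = -\sum_s \pi_s \E_D[\mathrm{KL}(P_{is}(D)\Vert P_{js}(D))] < 0$ whenever $D_+^{\min} > 0$, we have $\E[Y] = \lambda\nu_d r^d \E[W] < 0$, so the threshold $\alpha = 0$ satisfies $\alpha > \E[Y]$ and Lemma \ref{lem:Cramer} applies, giving $\tfrac1m\log\P(\sum_{k=1}^m Y_k \geq 0) \to -\Lambda_Y^*(0)$. Computing $\Lambda_Y^*(0) = -\inf_t \Lambda_Y(t) = \lambda\nu_d r^d(1 - \inf_t f(t)) = \lambda\nu_d r^d D_+^{\min}$ then yields $\P(X \geq 0) = n^{-\lambda\nu_d r^d D_+^{\min}(1 + o(1))}$.

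Finally, since $\lambda\nu_d r^d D_+^{\min} < 1$ I can fix $\beta > 0$ with $\lambda\nu_d r^d D_+^{\min} = 1 - 2\beta$; for $n$ large the $o(1)$ term is absorbed and $\P(X \geq 0) \geq n^{-1+\beta}$. Combining, $\P^0(0 \text{ is Flip-Bad}) \geq \pi_i\,\P(X \geq 0) \geq \pi_i\, n^{-1+\beta}$, whence $n\E^{0}[\1_{F_0^{G\cup\{0\}}}] \geq \pi_i\, n^{\beta} \to \infty$, which is exactly \eqref{eq:impossibility_condition_1}.

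The main obstacle is precisely this large-deviations lower bound and, in particular, matching the Chernoff upper-bound rate $\lambda\nu_d r^d D_+^{\min}$ of Lemma \ref{lem:correct_labels_MAP_failures}. One must let both the number of visible vertices \emph{and} the increments fluctuate under the optimal exponential tilt: fixing $N$ at its mean $\mu_n$ and applying Cram\'er to the increments alone would produce the strictly larger rate $-\mu_n\log\inf_t f(t)$ (since $-\log x > 1-x$ for $x\in(0,1)$) and hence too small a probability. The block decomposition into $\lfloor\log n\rfloor$ i.i.d. compound-Poisson summands is what makes Lemma \ref{lem:Cramer} directly applicable while preserving the sharp rate; the remaining care is verifying $\E[W] < 0$ (equivalently $D_+^{\min} > 0$, since $f$ is convex with $f(0)=f(1)=1$, so $D_+^{\min}=0$ forces $\theta_i = \theta_j$ and hence statistically indistinguishable communities, a degenerate case) and controlling the remainder block $Y_0$ via the conditioning above.
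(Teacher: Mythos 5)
Your proposal is correct and reaches the right conclusion, but it takes a genuinely different route through the large-deviations step than the paper does. The paper conditions on the number of visible neighbors $N(0)=m$, applies Cram\'er's theorem to the i.i.d.\ increments for each $m \geq \delta \log n$ at a small positive level $\alpha$, and then averages $\exp(m(-\Lambda_X^*(\alpha)-\epsilon))$ against the Poisson law of $N(0)$, recognizing the resulting series as the Poisson MGF; the sharp rate then comes out of the auxiliary bound $\Lambda_X^*(\alpha) \leq (t_{ij}+\eta)\alpha - \Lambda(t_{ij})$ imported from \cite[Lemma B.4]{Gaudio+2025}, together with \eqref{eq;CH_divergence_alternate}, and the hypothesis $\lambda\nu_d r^d \geq 1$ is consumed in controlling the leftover Poisson lower tail $\P(N(0)<\delta\log n)$. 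You instead exploit infinite divisibility to rewrite the compound-Poisson sum as $\lfloor \log n\rfloor$ i.i.d.\ compound-Poisson blocks $Y_k$ with $\Lambda_Y(t)=\lambda\nu_d r^d(f(t)-1)$ and apply Cram\'er once, at level $0$, to these blocks; the identity $\Lambda_Y^*(0)=-\inf_t\Lambda_Y(t)=\lambda\nu_d r^d(1-\inf_t f(t))=\lambda\nu_d r^d D_+$ then delivers the exponent directly, with no need for the rate-function upper bound, no Poisson-tail subtraction, and hence no use of $\lambda\nu_d r^d\geq 1$. Both arguments correctly avoid the trap you identify (freezing $N$ at its mean yields the strictly larger rate $-\mu_n\log\inf_t f(t)$); your blocking device and the paper's Poisson-averaging device are two ways of letting $N$ fluctuate under the tilt. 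Two small points to tidy up if you wrote this in full: (i) the degenerate case $D_+^{\min}=0$, where $\E[W]=0$ and Cram\'er at $\alpha=0$ is not literally applicable, should be dispatched by noting that then $\ell_j=\ell_i$ almost surely so the origin is Flip-Bad with probability one; and (ii) your reduction to $\inf_{t\in\R}f(t)=\inf_{t\in[0,1]}f(t)$ deserves the one-line convexity justification ($f$ convex with $f(0)=f(1)=1$), which you gesture at but should state.
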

\begin{proof}
    It is sufficient to show that $\P(0 \text{ is Flip-Bad in } G \cup \{0\}) \geq n^{-1 + \beta}$. Observe that 0 is Flip-Bad if and only if the posterior likelihood of an incorrect label (with respect to the true labels of all other vertices) is larger than the posterior likelihood of the correct label. Therefore, letting $\sigma^*(0) = i$, we would like to show that there exists $j \neq i$ such that
    \begin{equation}
    \label{eq:0_is_FlipBad}
        \P\bigg( \log(\pi_j) + \sum_{v \in V, v \sim 0} \log p_{j, \sigma^*(v)}(x_{0, v}; \|v\|) \geq \log(\pi_i) + \sum_{v \in V, v \sim 0} \log p_{i, \sigma^*(v)}(x_{0, v}; \|v\|) \bigg) \geq n^{-1+\beta}.
    \end{equation}

    Fix $j$ such that $\lambda \nu_d r^d D_+(\theta_i \Vert \theta_j; \pi, g) < 1$. Rearranging $\eqref{eq:0_is_FlipBad}$, we would like to lower bound
    \begin{equation}
    \label{eq:0_is_FlipBad_rearranged}
        P_{\text{Flip-Bad}} := \P\bigg( \sum_{v \in V, v \sim 0} \log\Big(\frac{p_{j, \sigma^*(v)}(x_{0, v}; \|v\|)}{p_{i, \sigma^*(v)}(x_{0, v}; \|v\|)}\Big) \geq \log\Big(\frac{\pi_i}{\pi_j}\Big) \bigg).
    \end{equation}
    We now construct a random variable $X$ such that 
    \begin{equation*}
        X \sim \log\bigg(\frac{p_{j, \sigma^*(v)}(x_{0, v}; \|v\|)}{p_{i, \sigma^*(v)}(x_{0, v}; \|v\|)}\bigg)
    \end{equation*}
    where $v$ is a vertex placed uniformly at random in the neighborhood of 0 and assigned a community label $\sigma^*(v)$ according to the probability vector $\pi$. Let $D$ be a random variable with density
    \begin{equation*}
        f_D(x) :=
        \begin{cases}
            dx^{d-1}/r^d \text{ if } 0 \leq x \leq r \\
            0 \text{ otherwise.} 
        \end{cases}
    \end{equation*}
    which represents the distance between 0 and a vertex $v$ placed uniformly at random in the neighborhood of 0. Let $X$ be a mixture distribution of $X_s$ for $s \in Z$, with probability $\pi_s$, which represents the community of $v$. Let $X_{is}$ be a random variable coupled to $D$, such that conditioned on $D = y$, we sample $X_{is}$ from $P_{is}(y)$; note that $X_{is}$ represents the edge weight between $v$ to 0. Finally, let
    \begin{equation*}
        X_s := \log\bigg( \frac{p_{js}(X_{is} ;D)}{p_{is}(X_{is} ;D)} \bigg).
    \end{equation*}   
    as in the proof of Lemma \ref{lem:correct_labels_MAP_failures}. Letting $N(0) \sim \text{Pois}(\lambda \nu_d r^d \log n)$ and $\{X_\ell\}_{\ell \in \N} \overset{iid}{\sim} X$, from \eqref{eq:0_is_FlipBad_rearranged} we obtain that
    \begin{align}
    \label{eq:0_is_FlipBad_LOTP}
        P_{\text{Flip-Bad}} &= \P\bigg( \sum_{\ell = 1}^{N(0)} X_\ell \geq \log\Big(\frac{\pi_i}{\pi_j}\Big) \bigg) \notag \\
        &= \sum_{m = 0}^\infty \P(N(0) = m) \P\bigg( \sum_{\ell = 1}^m X_\ell \geq \log\Big(\frac{\pi_i}{\pi_j}\Big)\bigg)
    \end{align}
    Now, we lower-bound the probability of each term in \eqref{eq:0_is_FlipBad_LOTP}. Fix $\alpha > 0$ and $\delta > 0$, which we will determine later. Using Cram\'er's theorem (Lemma \ref{lem:Cramer}), we have that for any fixed $\epsilon > 0$, there exists $M$ such that 
    \begin{equation*}
    \P\bigg(\sum_{\ell = 1}^m X_\ell \geq m\alpha \bigg) \geq \exp\Big(m(-\Lambda^*_X(\alpha) - \epsilon)\Big) \quad \text{for all } m \geq M
    \end{equation*}
    For sufficiently large $n$, we have that $\alpha \delta \log n \geq \log(\pi_i/\pi_j)$. Thus, for any $m \geq \delta \log n$, we obtain that
    \begin{equation*}
        \P\bigg(\sum_{\ell = 1}^m X_\ell \geq \log\Big(\frac{\pi_i}{\pi_j}\Big)\bigg) \geq \P\bigg(\sum_{\ell = 1}^m X_\ell \geq m\alpha \bigg) \geq \exp\Big(m(-\Lambda^*_X(\alpha) - \epsilon)\Big)
    \end{equation*}
    Furthermore, we also have that $\delta \log n \geq M$ for sufficiently large $n$. Hence, we can write \eqref{eq:0_is_FlipBad_LOTP} as
    \begin{align}
    \label{eq:0_is_FlipBad_simplified} 
        P_\text{Flip-Bad} &= \sum_{m=0}^\infty \P\bigg( \sum_{\ell = 1}^m X_\ell \geq \log\Big(\frac{\pi_i}{\pi_j}\Big)\bigg) \P(N(0) = m) \notag \\
        &\geq \sum_{m=\delta \log n}^\infty \P\bigg( \sum_{\ell = 1}^m X_\ell \geq \log\Big(\frac{\pi_i}{\pi_j}\Big)\bigg) \P(N(0) = m) \notag \\
        &\geq \sum_{m=\delta \log n}^\infty \exp\Big(m(-\Lambda^*_X(\alpha) - \epsilon)\Big) \P(N(0) = m) \notag \\
        &= \sum_{m=0}^\infty \exp\Big(m(-\Lambda^*_X(\alpha) - \epsilon)\Big) \P(N(0) = m) \notag \\
        &\qquad- \sum_{m=0}^{\delta \log n - 1} \exp\Big(m(-\Lambda^*_X(\alpha) - \epsilon)\Big) \P(N(0) = m) \notag \\
        &\geq \bigg[ \sum_{m=0}^\infty \exp\Big(m(-\Lambda^*_X(\alpha) - \epsilon)\Big) \P(N(0) = m) \bigg] - \P(N(0) < \delta \log n) \notag \\
        &= \E\Big[\exp\Big(N(0)(-\Lambda^*_X(\alpha) - \epsilon) \Big)\Big] - \P(N(0) < \delta \log n) \notag \\
        &= n^{\lambda \nu_d r^d \big(\exp\big(-\Lambda^*_X(\alpha) - \epsilon\big) - 1\big)} - \P(N(0) < \delta \log n).
    \end{align}
    where last equality follows from the moment-generating function of the Poisson random variable $N(0)$. 

    Now, let $t_{ij}$ be defined as in \eqref{def:t_ij} and define $\Lambda(t) := \log \E[e^{tX}]$ with derivative $\Lambda'(t)$. In \cite[Lemma B.4]{Gaudio+2025}, the authors showed that for any $\eta > 0$, taking $0 < \alpha < \Lambda'(t_{ij} + \eta)$ yields that $\Lambda^*_X(\alpha) \leq (t_{ij} + \eta)\alpha - \Lambda(t_{ij})$. Combining this result with \eqref{eq:0_is_FlipBad_simplified}, we obtain that for any $\eta > 0$, there exists $\alpha > 0$ such that
    \begin{equation}
    \label{eq:0_is_FlipBad_with_eta}
        P_\text{Flip-Bad} \geq n^{\lambda \nu_d r^d \big(\exp\big(-(t_{ij} + \eta)\alpha + \Lambda(t_{ij}) - \epsilon\big) - 1\big)} - \P(N(0) < \delta \log n).
    \end{equation}
    Now, we show that there exist $\epsilon > 0$, $\eta > 0$, $\alpha > 0$, and $\beta > 0$ such that 
    \begin{equation}
    \label{eq:FlipBad_exponent}
        \lambda \nu_d r^d  \Big(\exp\big(-(t_{ij} + \eta)\alpha + \Lambda(t_{ij}) - \epsilon\big) - 1\Big) > -1 + 2\beta.
    \end{equation}
    By \eqref{def:t_ij} and \eqref{eq;CH_divergence_alternate}, we obtain that
    \begin{equation*}
    \begin{aligned}
        &\lambda \nu_d r^d  \Big(\exp\big(-(t_{ij} + \eta)\alpha + \Lambda(t_{ij}) - \epsilon\big) - 1\Big) \\
        &\quad= \lambda \nu_d r^d  \bigg(\exp\bigg(-\epsilon -(t_{ij} + \eta)\alpha + \log\Big(\sum_{a \in Z} \pi_a \overline{\phi}_{t_{ij}}(P_{ia}, P_{ja}) \Big) \bigg) - 1\bigg) \\
        &\quad= \lambda \nu_d r^d \bigg(\Big(\sum_{a \in Z} \pi_a \overline{\phi}_{t_{ij}}(P_{ia}, P_{ja}) \Big) \cdot \exp\big(-\epsilon - (t_{ij} + \eta) \alpha \big) - 1 \bigg) \\
        &\quad= \lambda \nu_d r^d \bigg( \big(1 - D_+(\theta_i, \theta_j)\big) \cdot \exp\big(-\epsilon - (t_{ij} + \eta) \alpha \big) - 1 \bigg)
    \end{aligned}
    \end{equation*}
    For any $\gamma > 0$, observe that there exist $\epsilon > 0$, $\eta > 0$, and $\alpha > 0$ sufficiently small such that $\exp(-\epsilon - (t_{ij}+\eta)\alpha) \geq 1 - \gamma$. Hence, 
    \begin{align}
    \label{eq:FlipBad_exponent_simplified}
        &\lambda \nu_d r^d  \Big(\exp\big(-(t_{ij} + \eta)\alpha + \Lambda(t_{ij}) - \epsilon\big) - 1\Big) \notag \\
        &\quad\geq \lambda \nu_d r^d \bigg( \big(1 - D_+(\theta_i \Vert \theta_j; \pi, g)\big) \cdot (1 - \gamma) - 1 \bigg) \notag \\
        &\quad= -\lambda \nu_d r^d \bigg( \big(D_+(\theta_i \Vert \theta_j; \pi, g)\big) \cdot (1 - \gamma) + \gamma \bigg)
    \end{align}
    Observe that \eqref{eq:FlipBad_exponent_simplified} $\to -\lambda \nu_d r^d D_+(\theta_i \Vert \theta_j; \pi, g)$  as $\gamma \to 0$. Hence, let $\beta := (1 - \lambda \nu_d r^d D_+(\theta_i \Vert \theta_j; \pi, g)) / 3$ and note that $\beta > 0$ because $\lambda \nu_d r^d D_+(\theta_i \Vert \theta_j; \pi, g) < 1$. Then, we obtain that for sufficiently small $\gamma$, we have
    \begin{equation*}
        -\lambda \nu_d r^d \bigg( \big(D_+(\theta_i \Vert \theta_j; \pi, g)\big) \cdot (1 - \gamma) + \gamma \bigg) > -1 + 2\beta.
    \end{equation*}
    Therefore, there exist $\epsilon > 0$, $\eta > 0$, $\alpha > 0$, and $\beta > 0$ such that \eqref{eq:FlipBad_exponent} holds. Taking these values of $\epsilon$, $\eta$, $\alpha$, and $\beta$, from \eqref{eq:0_is_FlipBad_with_eta} we obtain that 
    \begin{equation}
    \label{eq:0_is_FlipBad_first_term_bounded}
        P_\text{Flip-Bad} \geq n^{-1 + 2\beta} - \P(N(0) < \delta \log n).
    \end{equation}
    Finally, we show that for any fixed $0 < c < 2\beta$, there exists $\delta > 0$ such that $\P(N(0) < \delta \log n) < n^{-1 + c}$. Since $N(0) \sim \text{Pois}(\lambda \nu_d r^d \log n)$, the Chernoff bound (Lemma \ref{lem:poisson_Chernoff}) yields
    \begin{align*}
        \P(N(0) < \delta \log n) &\leq \exp\bigg(\delta \log n - \lambda \nu_d r^d \log n - \log\Big(\frac{\delta}{\lambda \nu_d r^d}\Big) \delta \log n\bigg) \\
        &= n^{\delta - \lambda \nu_d r^d - \delta \log(\delta/(\lambda \nu_d r^d))}.
    \end{align*}
    Let $f(\delta) := \delta - \lambda \nu_d r^d - \delta \log(\delta/(\lambda \nu_d r^d))$. Observe that $f(\delta) \to -\lambda \nu_d r^d$ as $\delta \to 0$ and also that $f(\delta)$ is strictly increasing on $0 < \delta < \lambda \nu_d r^d$. Therefore, we can let $\delta > 0$ sufficiently small such that $f(\delta) < -\lambda \nu_d r^d + c$. Furthermore, since $-\lambda \nu_d r^d \leq -1$ by hypothesis, we obtain that $f(\delta) < -1 + c$. As a result, we obtain that
    \begin{equation*}
        \P(N(0) < \delta \log n) \leq n^{-1 + c}
    \end{equation*}
    which combined with \eqref{eq:0_is_FlipBad_first_term_bounded} implies that
    \begin{equation*}
        \P(0 \text{ is Flip-Bad in } G \cup \{0\}) \geq n^{-1 + 2\beta} - n^{-1 + c} > n^{-1+\beta}.
    \end{equation*}
     Therefore, \eqref{eq:impossibility_condition_1} is satisfied.
\end{proof}

\begin{lemma}
\label{lem:impossibility_lemma_2}
    Let $G \sim \text{GHCM}(\lambda, n, r, \pi, P(y), d)$, and suppose that $\lambda \nu_d r^d \geq 1$ and $\lambda \nu_d r^d \min_{i \neq j} D_+(\theta_i \Vert \theta_j; \pi, g) < 1$. Then, \eqref{eq:impossibility_condition_2} is satisfied.
\end{lemma}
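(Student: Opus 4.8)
The plan is to use the standard second-moment split according to the distance between the two added points $0$ and $y$. Writing $R := r(\log n)^{1/d}$ for the visibility radius, I split the numerator of \eqref{eq:impossibility_condition_2} as $\int_{\calS_{d,n}} = \int_{\|y\| \le 2R} + \int_{\|y\| > 2R}$. The guiding principle, exactly as in Lemma B.5 of \cite{Gaudio+2025}, is that once $0$ and $y$ are more than twice the visibility radius apart their neighborhoods are disjoint, so the Flip-Bad events decouple and reproduce the denominator exactly, while the remaining ``near'' region has volume only $O(\log n)$ and is therefore negligible.

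First I would treat the far region $\{\|y\| > 2R\}$. Here the two balls of radius $R$ centered at $0$ and $y$ are disjoint, so $F_0^{G\cup\{0,y\}}$ and $F_y^{G\cup\{0,y\}}$ are determined by the edges and labels in disjoint parts of the Poisson process and are hence independent under $\E^{0,y}$. Moreover, conditioning on a point at $y$ does not touch the neighborhood of $0$, so $\E^{0,y}[\1_{F_0^{G\cup\{0,y\}}}] = \E^0[\1_{F_0^{G\cup\{0\}}}]$, and by translation invariance on the torus the same holds for $F_y$. Thus the far integrand equals $(\E^0[\1_{F_0^{G\cup\{0\}}}])^2$, and since the far region has volume at most $n$, this part of the numerator is at most $n(\E^0[\1_{F_0^{G\cup\{0\}}}])^2$, which is precisely the denominator; the far region therefore contributes at most $1$ to the limsup.

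For the near region $\{\|y\| \le 2R\}$ I need matching bounds on the order of the Flip-Bad probability. I would first record that the Cram\'er argument of Lemma \ref{lem:impossibility_lemma_1}, upon sending the free parameters $\gamma,\epsilon,\eta,\alpha$ to zero, in fact yields the sharp lower bound $\E^0[\1_{F_0^{G\cup\{0\}}}] \ge n^{-\rho - o(1)}$, where $\rho := \lambda\nu_d r^d \min_{i\neq j} D_+(\theta_i\|\theta_j;\pi,g) < 1$. For a matching upper bound on the integrand I use $\1_{F_0}\1_{F_y} \le \1_{F_0}$ together with the Chernoff/MGF computation of Lemma \ref{lem:correct_labels_MAP_failures} and a union bound over the at most $k$ alternative labels $j \neq \sigma^*(0)$. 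Conditioning on the extra point at $y$ adds at most one further log-likelihood-ratio term to $\ell_j(0)-\ell_i(0)$, which by Assumption \ref{assump:bounded_log_likelihood} is bounded by $\eta$ in absolute value and so merely shifts the Chernoff threshold by a constant, changing the bound by a constant prefactor; this gives $\E^{0,y}[\1_{F_0^{G\cup\{0,y\}}}] \le n^{-\rho + o(1)}$ uniformly over the near region. Since that region has volume $\nu_d(2R)^d = O(\log n) = n^{o(1)}$, its contribution to the numerator is at most $n^{-\rho + o(1)}$.

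Combining the pieces, the denominator is $n(\E^0[\1_{F_0}])^2 = n^{1-2\rho+o(1)}$, so the near region contributes $n^{-\rho+o(1)}/n^{1-2\rho+o(1)} = n^{-(1-\rho)+o(1)} \to 0$ to the ratio, because $\rho < 1$. Together with the far region's contribution of at most $1$, this shows that the limsup in \eqref{eq:impossibility_condition_2} is at most $1$. The main obstacle I anticipate is precisely making the lower bound on $\E^0[\1_{F_0}]$ and the upper bound on the near-region integrand agree to within $n^{o(1)}$: this is where I must sharpen the bound of Lemma \ref{lem:impossibility_lemma_1} from the crude $n^{-1+\beta}$ to the tight exponent $n^{-\rho - o(1)}$ and verify, via Assumption \ref{assump:bounded_log_likelihood}, that adjoining the single conditioned vertex $y$ perturbs the Flip-Bad probability by only a constant factor.
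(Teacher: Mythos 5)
Your proposal is correct and follows essentially the same route as the proof the paper cites (Lemma B.5 of \cite{Gaudio+2025} / Lemma 13 of \cite{GaudioJin2025}): the near/far second-moment decomposition, with independence of the two Flip-Bad events once $\|y\| > 2r(\log n)^{1/d}$ reproducing the denominator, and a volume-times-probability estimate on the near region. You also correctly flag the one detail the paper's one-line proof glosses over, namely that closing the near-region estimate requires sharpening the Flip-Bad lower bound of Lemma \ref{lem:impossibility_lemma_1} from $n^{-1+\beta}$ to the tight exponent $n^{-\lambda\nu_d r^d \min_{i\neq j}D_+(\theta_i\Vert\theta_j;\pi,g) - o(1)}$ (which the Cram\'er argument does deliver as its free parameters tend to zero) so that it matches the Chernoff upper bound on $\E^{0,y}[\1_{F_0}]$.
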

\begin{proof}
The proof is identical to the proof of \cite[Lemma 13]{GaudioJin2025}, which itself is an adaption of \cite[Lemma B.5]{Gaudio+2025}, apart from the different IT threshold. Concretely, we invoke Lemma \ref{lem:impossibility_lemma_1} to obtain $\E^0[\1_{0 \text{ is Flip-Bad in } G \cup \{0\}}] \geq n^{-1 + \beta}$ for some constant $\beta > 0$ when $\lambda \nu_d r^d \geq 1$ and $\lambda \nu_d r^d \min_{i \neq j} D_+(\theta_i \Vert \theta_j; \pi, g) < 1$.
\end{proof}

Combining Lemmas \ref{lem:impossibility_lemma_1} and \ref{lem:impossibility_lemma_2} with Lemma \ref{lem:impossibility_lemma}, we conclude that exact recovery is impossible in Case 2. This completes the proof of Proposition \ref{prop:impossibility_proof_1}.

\end{document}